\newcommand{\Z}{\mathbb Z}
\newcommand{\Q}{\mathbb Q}
\theoremstyle{plain}
\numberwithin{equation}{section}
\newtheorem{thm}{Theorem}[section]
\newtheorem{theorem}[thm]{Theorem}
\newtheorem{lemma}[thm]{Lemma}
\newtheorem{proposition}[thm]{Proposition}
\newtheorem{conjecture}[thm]{Conjecture}
\newtheorem*{mainconjecture}{Main Conjecture}
\newtheorem*{ranktheorem}{Rank Theorem}
\theoremstyle{definition}
\newtheorem{definition}[thm]{Definition}
\newtheorem{remark}[thm]{Remark}
\begin{document}

\setcounter{page}{1}

\title[Class numbers in cyclotomic $\Z_p$-extensions]{Class numbers in cyclotomic $\Z_p$-extensions}
\author{John C. Miller}
\address{Department of Mathematics\\
                Rutgers University\\
                Hill Center for the Mathematical Sciences\\
                110 Frelinghuysen Road
                Piscataway, NJ 08854-8019}
\email{jcmiller@math.rutgers.edu}
\thanks{This material is based upon work supported by the National Science Foundation under Grant No. DUE-1022574.}

\subjclass[2010]{Primary 11R29, 11R18; Secondary 11R80, 11Y40}

\begin{abstract}
For any prime $p$ and any positive integer $n$,  let $\mathbb{B}_{p, n}$ denote the $n$th layer of the cyclotomic $\mathbb{Z}_p$-extension over the rationals.  Based on the heuristics of Cohen and Lenstra, and refined by new results on class numbers of particular $\mathbb{B}_{p, n}$, we provide evidence for the following conjecture:  For \emph{all} primes $p$ and positive integers $n$, the ideal class group of $\mathbb{B}_{p, n}$ is trivial.\end{abstract}

\maketitle

\section{Introduction}
Let $\mathbb{B}_{p, n}$ denote the $n$th layer of the cyclotomic $\mathbb{Z}_p$-extension over the rationals, i.e. the unique real subfield of the cyclotomic field $\mathbb{Q}(\zeta_{p^{n+1}})$ of degree $p^n$ over $\mathbb{Q}$ for odd primes $p$, and $\mathbb{Q}(\cos(2\pi/2^{n+2}))$ for $p=2$.  The class numbers of these fields have been the objects of intense investigation by number theorists, starting perhaps with Weber, who studied the $\mathbb{Z}_2$-extension.  A key step in his proof of the Kronecker-Weber theorem is the following theorem \cite{Weber}.

\begin{theorem}[Weber]
For all positive integers $n$, the class number of $\mathbb{B}_{2, n}$ is odd.
\end{theorem}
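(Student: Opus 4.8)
The plan is to write $K_n := \mathbb{B}_{2,n}$, so that $\Q = K_0 \subset K_1 \subset K_2 \subset \cdots$ is the cyclotomic $\Z_2$-extension, with each step $K_{n+1}/K_n$ cyclic of degree $2$; set $G_n := \op{Gal}(K_{n+1}/K_n) \iso \Z/2\Z$. Since the class number of $K_n$ is odd precisely when the $2$-Sylow subgroup $A_n$ of the ideal class group $\op{Cl}(K_n)$ is trivial, I would induct on $n$. The base case is $K_0 = \Q$, of class number $1$. For the inductive step I would assume $A_n = 0$, i.e. $h_{K_n}$ odd, and prove $A_{n+1} = 0$.

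The engine is Chevalley's ambiguous class number formula for the quadratic extension $K_{n+1}/K_n$. First I would record the ramification: the prime $2$ is totally ramified throughout the cyclotomic $\Z_2$-extension, so $K_n$ has a unique prime $\p_n$ above $2$, it ramifies in $K_{n+1}/K_n$ with $e_{\p_n}=2$, every other finite prime is unramified, and---since $K_n$, $K_{n+1}$ are totally real---no infinite place ramifies. Hence in
\[
|\op{Cl}(K_{n+1})^{G_n}| = \frac{h_{K_n}\,\prod_v e_v}{[K_{n+1}:K_n]\,[E_{K_n} : E_{K_n}\cap N_{K_{n+1}/K_n}(K_{n+1}^\times)]},
\]
with $v$ running over all places of $K_n$ and $E_{K_n}$ the unit group, we have $\prod_v e_v = 2 = [K_{n+1}:K_n]$, so these cancel and
\[
|\op{Cl}(K_{n+1})^{G_n}| = \frac{h_{K_n}}{[E_{K_n} : E_{K_n}\cap N_{K_{n+1}/K_n}(K_{n+1}^\times)]}.
\]

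The unit index on the right is a power of $2$: for $u \in K_n^\times$ one has $N_{K_{n+1}/K_n}(u) = u^2$, so $E_{K_n}^2$ consists of norms and the index divides $[E_{K_n}:E_{K_n}^2]$. Therefore $|\op{Cl}(K_{n+1})^{G_n}|$ is a divisor of $h_{K_n}$, hence odd by the inductive hypothesis, so the $2$-group $A_{n+1}^{G_n}$---which is exactly the $2$-part of $\op{Cl}(K_{n+1})^{G_n}$---is trivial. Finally, since $G_n$ is a nontrivial $2$-group acting on the finite $2$-group $A_{n+1}$, an orbit count shows $A_{n+1}^{G_n}\neq 0$ whenever $A_{n+1}\neq 0$; thus $A_{n+1}^{G_n}=0$ forces $A_{n+1}=0$, completing the induction.

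I expect the main obstacle to lie not in the induction but in assembling its inputs: having the ambiguous class number formula at hand (its justification runs through class field theory and the cohomology of id\`eles) and verifying the arithmetic of the tower, namely that $2$ is totally ramified with a single prime at each layer and that every layer is totally real. Once these hold the formula collapses precisely because there is a unique ramified prime and no ramification at infinity---which is the conceptual core of Weber's theorem: a lone totally ramified prime cannot manufacture new $2$-torsion in the class group over a base of odd class number.
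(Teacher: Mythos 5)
Your proof is correct, but there is nothing in the paper to compare it against: the paper does not prove this statement, it simply quotes it as a classical background result with a citation to Weber's 1886 paper, and the surrounding discussion uses it only as motivation. Your argument is the standard modern proof (essentially the one by which Iwasawa later generalized Weber's result): induct up the tower and apply Chevalley's ambiguous class number formula to each quadratic step, and each ingredient you invoke is sound. The ramification facts hold as you state them ($2$ is the unique ramified prime, it is totally ramified since $\mathbb{B}_{2,n} \subset \Q(\zeta_{2^{n+2}})$ and $2$ is totally ramified there, and no real place ramifies because every layer is totally real), so $\prod_v e_v = 2 = [K_{n+1}:K_n]$ cancels; the unit index divides $[E_{K_n}:E_{K_n}^2]$, a power of $2$, so the ambiguous class number divides the odd number $h_{K_n}$; the identification of $A_{n+1}^{G_n}$ with the $2$-part of $\op{Cl}(K_{n+1})^{G_n}$ is valid because the Sylow decomposition of the class group is preserved by $G_n$; and the final step is the standard fixed-point lemma for a $p$-group acting on a nontrivial finite $p$-group. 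Weber's original argument was quite different in flavor---a computational study of units in the real cyclotomic tower---so your class-field-theoretic route is cleaner, and it buys more: replacing $2$ by an arbitrary prime $p$ (norms of base units are then $p$-th powers, so the unit index is a power of $p$) gives verbatim the paper's Theorem \ref{Iwasawa}, Iwasawa's theorem that $p \nmid h(\mathbb{B}_{p,n})$ for every prime $p$ and every $n$.
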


Weber's result was later generalized by Iwasawa \cite{Iwasawa} to all cyclotomic $\mathbb{Z}_p$-extensions over $\mathbb{Q}$.
\begin{theorem}[Iwasawa]\label{Iwasawa}
For all positive integers $n$ and primes $p$, the class number of $\mathbb{B}_{p, n}$ is not divisible by $p$, i.e.\ the $p$-part of the class group is trivial.
\end{theorem}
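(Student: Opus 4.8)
The plan is to deduce this from Iwasawa's theory of $\Z_p$-extensions, the point being that the base field $\Q$ has class number $1$ and that the cyclotomic tower is ramified in the simplest possible way. Write $\Q_\infty = \bigcup_n \mathbb{B}_{p,n}$ for the cyclotomic $\Z_p$-extension, put $\Gamma = \op{Gal}(\Q_\infty/\Q) \iso \Z_p$ with topological generator $\gamma_0$, and let $A_n$ denote the $p$-Sylow subgroup of the ideal class group of $\mathbb{B}_{p,n}$. The goal is to prove $A_n = 0$ for every $n$, which is precisely the assertion that $p \nmid h(\mathbb{B}_{p,n})$. First I would record the ramification: since $\mathbb{B}_{p,n}$ lies inside $\Q(\zeta_{p^{n+1}})$, the only rational prime ramifying in $\Q_\infty/\Q$ is $p$, and it is totally ramified at every layer. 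This single-totally-ramified-prime hypothesis is exactly what makes the associated Iwasawa module controllable.

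Next I would form the projective limit $X = \varprojlim_n A_n$ along the norm maps, a finitely generated module over $\Lambda = \Z_p[[T]]$ with $T = \gamma_0 - 1$. The central step is the identification $X/TX \iso A_0$. To establish it, let $L_\infty$ be the maximal unramified abelian pro-$p$ extension of $\Q_\infty$, so that $X = \op{Gal}(L_\infty/\Q_\infty)$ by class field theory. Let $I$ be the inertia subgroup of a prime above $p$ in $G = \op{Gal}(L_\infty/\Q)$. Because $L_\infty/\Q_\infty$ is everywhere unramified we have $I \cap X = 1$, while total ramification of $p$ in $\Q_\infty/\Q$ forces $I$ to surject onto $G/X = \Gamma$; hence $I \iso \Gamma$ is a section and $G = X \rtimes \Gamma$. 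Passing to the maximal abelian quotient turns the conjugation action of $\gamma_0$ into the operator $T$, so the commutator subgroup becomes $TX$, and removing the image of the inertia group $I$ cuts out exactly the maximal unramified abelian $p$-extension of $\Q$. Its Galois group is $A_0$, giving $X/TX \iso A_0$. Since $\Q$ has class number $1$, $A_0 = 0$, whence $X/TX = 0$. As $T$ lies in the maximal ideal $\m = (p,T)$ of the local ring $\Lambda$, this yields $X = TX \subseteq \m X$, and Nakayama's lemma forces $X = 0$.

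Finally I would descend from the limit back to each finite layer. Total ramification makes the norm maps $A_{n+1} \to A_n$ surjective: writing $L_n$ for the $p$-Hilbert class field of $\mathbb{B}_{p,n}$, the extension $L_n/\mathbb{B}_{p,n}$ is unramified while $\mathbb{B}_{p,n+1}/\mathbb{B}_{p,n}$ is totally ramified at $p$, so $L_n \cap \mathbb{B}_{p,n+1} = \mathbb{B}_{p,n}$; then $L_n \mathbb{B}_{p,n+1} \subseteq L_{n+1}$ gives a surjection $A_{n+1} \twoheadrightarrow A_n$. Surjectivity of the transition maps makes every projection $X \twoheadrightarrow A_n$ surjective, so $X = 0$ immediately yields $A_n = 0$ for all $n$, completing the argument. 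I expect the genuine obstacle to be the identification $X/TX \iso A_0$: everything hinges on the delicate interplay between the inertia group of the ramified prime and the unramified module $X$, and on checking that the $\Gamma$-coinvariants of $X$ reproduce the base-field class group exactly rather than some larger quotient. Once that is in hand, the triviality of $h(\Q)$ propagates up the whole tower through Nakayama's lemma.
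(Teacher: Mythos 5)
Your proof is correct, but there is nothing in the paper to compare it against: the paper does not prove this statement at all, it quotes it as Iwasawa's theorem with a citation to his 1956 note and uses it as a known input. What you have written is the standard Iwasawa-theoretic proof found in textbooks (e.g.\ Washington's \emph{Introduction to Cyclotomic Fields}, Chapter 13): total ramification of the unique ramified prime $p$ gives the control isomorphism $X/TX \iso A_0$, the hypothesis $h(\mathbb{Q})=1$ gives $A_0=0$, Nakayama kills $X$, and surjectivity of the norm maps descends the vanishing to every layer. Two remarks. First, Iwasawa's original argument is the finite-level avatar of the same idea, with no inverse limit: for each fixed $n$, writing $\operatorname{Gal}(L_n/\mathbb{Q})=A_n\rtimes\langle\gamma\rangle$ with $\gamma$ of order $p^n$ (exactly your semidirect-product step, one layer at a time), triviality of the class group of $\mathbb{Q}$ forces $A_n=(\gamma-1)A_n$; since $(\gamma-1)^{p^n}\equiv\gamma^{p^n}-1=0\pmod{p}$ as an operator on $A_n$, iteration gives $A_n\subseteq p^mA_n$ for every $m$, hence $A_n=0$ because $A_n$ is a finite $p$-group. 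That route is more elementary and proves the theorem layer by layer, whereas yours buys the general $\Lambda$-module machinery. Second, the one point you gloss over is the appeal to Nakayama: the algebraic form requires $X$ to be finitely generated over $\Lambda$, and the usual proof of that finite generation is itself the topological Nakayama lemma, so as written there is a whiff of circularity. The clean fix is to invoke the topological Nakayama lemma directly: $X$ is compact (an inverse limit of finite groups), $X=TX$ implies $X=\mathfrak{m}X$ with $\mathfrak{m}=(p,T)$, and compactness gives $X=\bigcap_{m\geq 1}\mathfrak{m}^m X=0$. With that substitution your argument is complete and correct.
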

However, the non-$p$-part of the class groups of the $\mathbb{Z}_p$-extensions remains deeply mysterious.  The only general result we have is Washington's ``local" boundedness result \cite{Washington_non_p}.
\begin{theorem}[Washington]
Let $q$ be a prime not equal to $p$, and let $q^{e_n}$ be the exact power of $q$ dividing the class number of $\mathbb{B}_{p, n}$.  Then $e_n$ is constant for sufficiently large $n$.
\end{theorem}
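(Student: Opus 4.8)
The plan is to fix the prime $q \neq p$ and study the $q$-Sylow subgroup $A_n$ of the ideal class group of $\mathbb{B}_{p,n}$, so that $q^{e_n} = |A_n|$. Rather than attack ``eventually constant'' directly, I would prove the two separate statements that (i) the sequence $(e_n)$ is non-decreasing and (ii) it is bounded above; an integer sequence with both properties is eventually constant, which is exactly the claim. The division of labor is that (i) is a formal consequence of $q\neq p$, while (ii) is the genuine analytic theorem.

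For the monotonicity (i), I would use only the functoriality of class groups in the tower $\mathbb{B}_{p,n}\subset\mathbb{B}_{p,n+1}$ together with $q\neq p$. Let $j\colon A_n\to A_{n+1}$ be the map induced by extension of ideals and $N\colon A_{n+1}\to A_n$ the norm. Their composite $N\circ j$ is multiplication by the degree $[\mathbb{B}_{p,n+1}:\mathbb{B}_{p,n}]=p$. Since $A_n$ is a finite abelian $q$-group and $\gcd(p,q)=1$, multiplication by $p$ is an automorphism of $A_n$; hence $j$ is injective and $e_n\le e_{n+1}$. This is precisely the step that collapses when $q=p$ (where multiplication by $p$ is far from invertible), which is why the $p$-part is governed instead by Iwasawa's growth formula rather than by a boundedness phenomenon.

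Step (ii), the boundedness, is the analytic heart and the step I expect to be the main obstacle. Here I would pass through the analytic class number formula to express $q^{e_n}$ in terms of the $q$-adic valuations of the special $L$-values attached to the (necessarily even) Dirichlet characters $\chi$ of $p$-power order that cut out $\mathbb{B}_{p,n}$; equivalently, working inside the CM field $\mathbb{Q}(\zeta_{p^{n+1}})$, in terms of generalized Bernoulli numbers $B_{1,\chi}$ for the relative part together with the index of cyclotomic units for the real part. Fixing an embedding $\overline{\mathbb{Q}}\hookrightarrow\overline{\mathbb{Q}_q}$, the key point is that these values are the specializations at finite-order characters $\psi$ of a single bounded $q$-adic object: the ($q$-integral, after the standard smoothing by a factor $c-\sigma_c$) Stickelberger element, viewed as an element of the Iwasawa algebra $\mathbb{Z}_q[[\Gamma]]$ with $\Gamma=\mathrm{Gal}(\mathbb{B}_{p,\infty}/\mathbb{Q})\cong\mathbb{Z}_p$, i.e. as a $q$-adic measure on $\Gamma$. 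Because $q\neq p$, every value $\psi(\gamma)$ is a $p$-power root of unity, hence a $q$-adic unit, and $\psi(\gamma)-1$ is a $q$-unit as well. The difficulty is that, unlike the $p$-adic situation, these arguments lie on the boundary $|\,\cdot\,|_q=1$ rather than inside an open disk, so the usual Weierstrass-preparation control over the valuation is unavailable and must be replaced by direct $q$-adic estimates on the measure; this uniform bound on the valuations of all finite-order specializations is Washington's genuine contribution, and one must in addition control the regulator/cyclotomic-unit term to handle the totally real field itself rather than only the relative class number.

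Combining the two halves finishes the argument: $(e_n)$ is non-decreasing by (i) and bounded above by (ii), hence eventually constant. I would expect essentially all of the effort of a full write-up to be concentrated in the uniform $q$-adic bound of (ii), with the monotonicity (i) and this concluding observation taking only a few lines.
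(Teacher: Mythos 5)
First, a point of orientation: the paper you are being compared against does not prove this statement at all --- it is quoted as known background and attributed, with citation, to Washington \cite{Washington_non_p} --- so your attempt can only be measured against Washington's own argument. Measured that way, your architecture is the right one and matches his: step (i), injectivity of the extension map $j\colon A_n\to A_{n+1}$ on $q$-Sylow subgroups because $N\circ j$ is raising to the power $[\mathbb{B}_{p,n+1}:\mathbb{B}_{p,n}]=p$, which is an automorphism of a finite abelian $q$-group when $q\neq p$, is correct and complete as written; and deducing eventual constancy from monotonicity plus boundedness is exactly how the theorem is concluded.

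The genuine gap is step (ii). You describe what would have to be proved and then defer it (``this uniform bound \dots is Washington's genuine contribution''), but that uniform bound \emph{is} the theorem: step (i) and the concluding observation are routine, so a write-up that black-boxes (ii) has established nothing of substance. To turn (ii) into a proof you would need, first, the reduction to the CM field, which you invoke implicitly but never justify --- and it is unavoidable, since $\mathbb{B}_{p,n}$ is totally real, its characters are even, and $B_{1,\chi}=0$ for nontrivial even $\chi$, so the Stickelberger element says nothing about $\mathbb{B}_{p,n}$ directly. The correct route is to bound the $q$-part of $h(\mathbb{Q}(\zeta_{p^{n+1}}))$, which dominates that of $h(\mathbb{B}_{p,n})$ because $p$ is totally ramified in $\mathbb{Q}(\zeta_{p^{n+1}})/\mathbb{B}_{p,n}$, hence the norm map on class groups is surjective. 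After that reduction, two independent estimates are needed, neither of which your outline supplies: (a) for the minus part, that all but finitely many of the $B_{1,\chi}$, as $\chi$ runs over odd characters of $p$-power conductor, are $q$-adic units --- here, as you correctly note, Weierstrass preparation is useless because the specialization values $\psi(\gamma)$ sit on the boundary of the $q$-adic unit disk, and Washington's argument (and Sinnott's later one, exploiting the rationality of the underlying power series) replaces it by a genuinely different mod-$q$ independence argument; and (b) for the plus part, that the $q$-part of the cyclotomic-unit index $[E_n:C_n]$ stabilizes, which is a separate argument and not a footnote to (a). Your proposal stops exactly where this work begins: accurate as a road map, incomplete as a proof.
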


On the other hand, much recent progress has been made regarding the $\mathbb{Z}_2$- and $\mathbb{Z}_3$-extensions. By analyzing properties of the cyclotomic units, Fukuda and Komatsu \cite{Fukuda} went much further than Weber and proved the following striking result.

\begin{theorem}[Fukuda, Komatsu]
For all positive integers $n$, the class number of $\mathbb{B}_{2, n}$ is not divisible by any prime less than $10^9$.
\end{theorem}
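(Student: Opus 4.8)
The plan is to fix a prime $\ell$ with $2 < \ell < 10^9$ and to show that $\ell \nmid h_n$, where $h_n := h(\mathbb{B}_{2,n})$, for \emph{every} $n$; the prime $\ell = 2$ is already settled by Weber's theorem (equivalently by Theorem~\ref{Iwasawa} with $p=2$). The central device is the analytic class number formula, which for these prime-power real cyclotomic layers $\mathbb{B}_{2,n} = \mathbb{Q}(\zeta_{2^{n+2}})^+$ identifies $h_n$ with the index $[E_n : C_n]$ of the group $C_n$ of cyclotomic units inside the full unit group $E_n$. It therefore suffices to prove $\ell \nmid [E_n : C_n]$. The point of this reformulation is that the cyclotomic units are entirely explicit, so that divisibility of the index by $\ell$ can be detected through congruence conditions rather than through a direct computation of the class number of a field of degree $2^n$, which is hopeless already for moderate $n$.

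First I would exploit that $\ell$ is odd to control the tower maps. For the degree-two step $\mathbb{B}_{2,n} \subset \mathbb{B}_{2,n+1}$, the composite of extension of ideals $j$ followed by the norm $N$ is the squaring map on the class group, hence an automorphism of its $\ell$-part $A_n$; consequently $j \colon A_n \hookrightarrow A_{n+1}$ is injective and $\lvert A_n \rvert$ is nondecreasing in $n$. Combined with Washington's theorem, which forces $\lvert A_n \rvert$ to be eventually constant, this reduces the problem to showing triviality of $A_n$ at a single (a priori large) stabilization layer. The next step makes this effective: growth of $A_n$ can only occur while $\ell$ still splits in the tower, i.e.\ while $\ell \equiv \pm 1 \pmod{2^{n+2}}$. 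Since $\ell < 10^9 < 2^{30}$, this bounds the relevant layer by a small explicit $n_\ell$ (governed by $\max(v_2(\ell-1), v_2(\ell+1))$, so $n_\ell \le 28$), collapsing the infinitely many conditions to a finite check.

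The heart of the argument is then an effective no-growth criterion together with its verification. For the criterion I would run the standard ramification and genus analysis of the $\mathbb{Z}_2$-extension—only the prime above $2$ ramifies, it is totally ramified, and the base field $\mathbb{Q}$ has class number one—to show that once $\ell$ is nonsplit at layer $n \ge n_\ell$, the injection $j \colon A_n \hookrightarrow A_{n+1}$ is also surjective, whence $A_{n+1} \cong A_n$ and no further growth is possible. It remains to certify $\ell \nmid [E_{n_\ell} : C_{n_\ell}]$. Rather than compute the index, I would follow Fukuda and Komatsu and translate this into the statement that no generator of the $\mathbb{F}_\ell$-vector space $C_{n_\ell}/C_{n_\ell}^{\ell}$ lies in $E_{n_\ell}^{\ell}$: if some cyclotomic unit $c \notin C_{n_\ell}^\ell$ were a genuine $\ell$-th power of a global unit, it would be an $\ell$-th power residue modulo every prime, so producing one auxiliary prime $\mathfrak{q}$ with $N\mathfrak{q} \equiv 1 \pmod \ell$ at which the image of $c$ is a nontrivial $\ell$-th-power-class certifies $c \notin E_{n_\ell}^\ell$. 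Such a certificate exists whenever the index is prime to $\ell$ (by Chebotarev), and the test is a congruence computation of size polynomial in $\log \ell$.

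The main obstacle is the feasibility of this verification, uniformly over the roughly $5 \times 10^7$ primes below $10^9$. For the overwhelming majority of $\ell$ the splitting layer $n_\ell$ is tiny, since $\ell \equiv \pm 1 \pmod{2^{n+2}}$ is a stringent condition; the genuine difficulty is the sparse family of \emph{exceptional} primes for which $\ell \mp 1$ is divisible by a high power of $2$, forcing $n_\ell$ as large as $28$ and hence a field of degree up to $2^{28}$. For these one cannot realize $E_{n_\ell}$, $C_{n_\ell}$, or the prime $\mathfrak{q}$ explicitly, and the entire method hinges on evaluating the cyclotomic-unit congruence condition $\ell$-adically, via its trigonometric product expression and reduction to computations in a residue ring of manageable size, while rigorously bounding away every lifting and rounding error so that no false negative can occur. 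Assembling these $5 \times 10^7$ independent certificates into one finite, machine-checkable verification—and guaranteeing the congruence test is both sound and always successful for $\ell < 10^9$—is where essentially all of the work lies; the number-theoretic reductions above merely bring the problem to that point.
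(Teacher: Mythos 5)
First, a point of order: the paper does not prove this statement at all. It is quoted, with attribution, from Fukuda and Komatsu \cite{Fukuda}, whose proof occupies a series of papers together with a very large computation; the present paper only uses the result as an input to its heuristic estimates. So there is no internal proof to compare yours against, and your proposal can only be judged against what a complete proof would require. By that standard it is a program rather than a proof, even though it reconstructs the correct skeleton from the literature: reduction to the cyclotomic unit index $[E_n:C_n]$, injectivity of the capitulation maps on the odd part, an effective stabilization criterion tied to the splitting of $\ell$ in the tower, and power-residue certificates modulo auxiliary primes.

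There are two genuine gaps. (1) Your ``effective no-growth criterion'' is the heart of the matter, and the justification you offer for it --- ``standard ramification and genus analysis'' --- does not prove it. What genus theory (Chevalley's ambiguous class number formula) gives, using that only the prime above $2$ ramifies, that it is totally ramified, and that the relevant unit-norm index is a power of $2$, is this: if $\ell \nmid h_n$ then $A_{n+1}^G = 0$, where $G = \operatorname{Gal}(\mathbb{B}_{2,n+1}/\mathbb{B}_{2,n})$. But $\ell$ is odd, so $A_{n+1}$ splits into eigenspaces $A_{n+1}^{+} \oplus A_{n+1}^{-}$ under the order-two group $G$, and the ambiguous class formula says nothing about the minus eigenspace: a group of order $2$ can perfectly well act on a nontrivial $\ell$-group with trivial fixed points. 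Field-theoretically, the possibility you must exclude is a dihedral $\ell$-extension of $\mathbb{B}_{2,n}$ that is unramified over $\mathbb{B}_{2,n+1}$, and no amount of ramification bookkeeping rules this out; this is exactly where the hypothesis that $\ell$ no longer splits has to do real work (via Kummer-theoretic/reflection arguments over $\mathbb{B}_{2,n+1}(\zeta_\ell)$, or the unit-theoretic arguments of Horie and of Fukuda--Komatsu). In other words, the step you dispatch in one sentence is the main theorem of the cited papers. (2) Even granting that criterion, your final paragraph concedes that certifying $\ell \nmid [E_{n_\ell}:C_{n_\ell}]$ for the roughly $5\times 10^{7}$ primes below $10^9$ --- including the exceptional primes whose test lives in a field of degree up to about $2^{28}$, where $E_{n_\ell}$ and $C_{n_\ell}$ cannot be written down --- ``is where essentially all of the work lies.'' A proof whose decisive verification is deferred in this way has not been given; it is a correct identification of the strategy one would have to execute.
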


This suggests the following conjecture, known as \emph{Weber's class number problem}.

\begin{conjecture}[Weber's class number problem]
For all positive integers $n$, the class number of $\mathbb{B}_{2, n}$ is $1$.
\end{conjecture}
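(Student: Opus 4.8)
The plan is to attack the equality $h_n := h(\mathbb{B}_{2,n}) = 1$ analytically, reducing it to an \emph{effective} upper bound on the class number and then clearing the finitely many admissible prime divisors with the algebraic tools already in hand. Throughout write $K_n = \mathbb{B}_{2,n}$, a totally real abelian field of degree $2^n$ and conductor $2^{n+2}$, with regulator $R_n$ and discriminant $d_{K_n}$. Since $K_n$ is the maximal real subfield of $\mathbb{Q}(\zeta_{2^{n+2}})$, its class number equals the index of the cyclotomic units, $h_n = [E_n : C_n]$, where $E_n$ is the full unit group and $C_n$ the group of cyclotomic units. Consequently the cyclotomic regulator factors as $R_{C_n} = h_n R_n$, and because $C_n$ is given by explicit generators, $R_{C_n}$ is directly computable. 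By Iwasawa's theorem (Theorem~\ref{Iwasawa}) the $2$-part of $h_n$ is trivial, so it suffices to exclude odd prime divisors.

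First I would write down the analytic class number formula for $\zeta_{K_n}(s) = \zeta(s)\prod_{\chi \neq 1} L(s,\chi)$, which yields
\begin{equation}
\prod_{\chi \neq 1} L(1,\chi) = \frac{2^{2^n - 1}\, h_n R_n}{\sqrt{d_{K_n}}},
\end{equation}
the product running over the nontrivial even Dirichlet characters cutting out $K_n$. Each factor $L(1,\chi)$ is an explicit logarithmic expression in the quantities $\sin(\pi a / 2^{n+2})$ and is bounded polynomially in the conductor. Since $h_n R_n = R_{C_n}$, the formula merely recomputes the cyclotomic regulator; the content I need is that an upper bound on $h_n$ is \emph{equivalent} to a lower bound on the true regulator, as $R_n \geq R_{C_n}/B_n$ gives $h_n \leq B_n$. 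I would therefore insert the sharpest available unconditional regulator lower bound (Friedman--Zimmert) to obtain an explicit $B_n$ at each layer.

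Granting $h_n \leq B_n$, the second step is to combine this with congruence restrictions on the possible prime divisors. Following the Fukuda--Komatsu and Horie analyses, any odd prime $\ell \mid h_n$ must split in a large subfield and hence satisfies strong congruence conditions modulo $2^{n+2}$, so the smallest admissible $\ell$ grows with $n$; below the bound $B_n$ only finitely many candidate primes remain. Each such $\ell$ is then eliminated by Washington's stabilization theorem, made effective, together with a direct computation in the bottom layer at which $\ell$ could first appear. This is precisely the mechanism by which Fukuda--Komatsu cleared all primes below $10^9$, and it turns ``$h_n = 1$'' at a fixed layer into a finite verification.

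The main obstacle --- and the reason this remains a conjecture rather than a theorem --- is \textbf{uniformity in $n$}. Unwinding the discriminant via the conductor--discriminant formula shows that $(1)$ forces $\log(h_n R_n)$ to be of order $n\,2^{n}$, whereas the unconditional lower bounds for $R_n$ supply only $\log R_n = O(2^n)$; the shortfall grows linearly in $n$ within the exponent, so the resulting bound $B_n$ is finite only for $n$ below an explicit threshold. Thus the analytic method verifies $h_n = 1$ only in a bounded range of layers, while the congruence-and-stabilization method removes only finitely many primes at a time. Closing the gap would require either a genuinely new lower bound for the regulators of this particular tower --- equivalently, a uniform upper bound on the index $[E_n : C_n]$ as $n \to \infty$ --- or an Iwasawa-theoretic argument controlling the non-$2$ part of the class group uniformly along the $\mathbb{Z}_2$-extension, neither of which is presently available.
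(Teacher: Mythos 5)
There is a genuine gap, and your own final paragraph concedes it: what you have written is not a proof but a survey of an attack that fails. The statement you were asked to prove is precisely Weber's class number problem, which is an \emph{open conjecture}; the paper does not prove it either. The paper's entire treatment (Section 5) is heuristic: it combines the Fukuda--Komatsu result that no prime below $10^9$ divides any $h(\mathbb{B}_{2,n})$, the known values $h(\mathbb{B}_{2,n})=1$ for $n \leq 7$, and a Cohen--Lenstra computation bounding the triple sum $-\sum_{q>10^9}\sum_{j\geq 8}\sum_{k\geq 2}\frac{\phi(2^j)}{f}\log(1-q^{-fk})$ by $1.3\times 10^{-8}$, to conclude only that the conjecture is true with (Bayesian) probability at least $99.999998\%$. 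No unconditional argument of the kind you sketch exists, and your diagnosis of the obstruction is essentially the correct one: the conductor--discriminant formula forces $\log(h_n R_n)$ to grow like $n\,2^n$, while unconditional regulator lower bounds (Friedman--Zimmert or any other known bound) give only $O(2^n)$ in the exponent, so the analytic class number bound $B_n$ blows up with $n$ and the method covers only finitely many layers.

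Two further points in your sketch would not survive scrutiny even as parts of a partial result. First, Washington's stabilization theorem cannot be ``made effective'': its conclusion that the $q$-part $e_n$ is eventually constant comes with no computable bound on where stabilization occurs, and this ineffectivity is exactly why it yields no finite verification procedure; Fukuda--Komatsu clear primes below $10^9$ by explicit cyclotomic-unit computations and layer-by-layer nondivisibility criteria, not by an effectivized version of Washington. Second, eliminating ``the finitely many candidate primes below $B_n$'' at a \emph{fixed} layer $n$ is exactly what the existing computations already do for $n \leq 6$ (unconditionally) and $n=7$ (under GRH); your proposal adds nothing that pushes past the threshold where $B_n$ ceases to be finite. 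So the proposal should be read as a correct explanation of why the conjecture is hard, not as a proof, and it should be compared not with a proof in the paper (there is none) but with the paper's heuristic probability estimate.
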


Fukuda, Komatsu and Morisawa proved a similar result for the $\mathbb{Z}_3$-extension \cite{Fukuda2}, which suggests that all the $\mathbb{B}_{3, n}$ too may have trivial class group.
\begin{theorem}[Fukuda, Komatsu, Morisawa]
For all positive integers $n$, the class number of $\mathbb{B}_{3, n}$ is not divisible by any prime less than $10^9$.
\end{theorem}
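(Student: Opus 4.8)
The plan is to treat each prime $\ell < 10^9$ separately and, for each, collapse the assertion ``for all $n$'' into a single finite computation. First I would dispose of $\ell = 3$: Iwasawa's theorem (Theorem \ref{Iwasawa}) already gives $3 \nmid h(\mathbb{B}_{3,n})$ for every $n$, so only primes $\ell \neq 3$ remain. For these I would pass to cyclotomic units. Writing $E_n$ and $C_n$ for the full unit group and the group of cyclotomic units of $\mathbb{B}_{3,n}$, Sinnott's index formula, whose spurious $2$-power factor is trivial for the prime-power conductor $3^{n+1}$, gives $h(\mathbb{B}_{3,n}) = [E_n : C_n]$. Hence $\ell \mid h(\mathbb{B}_{3,n})$ exactly when $E_n/C_n$ has nontrivial $\ell$-torsion, i.e.\ when some cyclotomic unit becomes an $\ell$th power in $E_n$ without already being one in $C_n$. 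The whole problem is thereby rephrased as the detection of $\ell$th-power relations among \emph{explicit} cyclotomic units.

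The decisive step is to make Washington's boundedness theorem effective and to pin down, for each $\ell$, a single controlling layer. The splitting of $\ell$ in the tower is explicit: $\ell$ splits completely in $\mathbb{B}_{3,k}$ exactly when $\ell \equiv \pm 1 \pmod{3^{k+1}}$, so the decomposition behaviour of $\ell$ freezes at the layer $n_0(\ell) := v_3(\ell^2 - 1)$. Decomposing $A_n \otimes \Z_\ell$ under the cyclic group $G_n = \op{Gal}(\mathbb{B}_{3,n}/\Q)$ — a semisimple action, since $\ell \neq 3$ — I would analyze the growth of the $\chi$-eigenspaces character by character and show that a suitable norm-compatible cyclotomic unit has $\ell$th-power behaviour which stabilises precisely at layer $n_0(\ell)$. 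The target is a clean reduction: $\ell \mid h(\mathbb{B}_{3,n})$ for some $n \ge 1$ if and only if a single controlling cyclotomic unit at layer $n_0(\ell)$ fails an explicit $\ell$th-power condition. The gain is structural: at layer $n_0(\ell)$ one has $\ell \equiv \pm 1 \pmod{3^{n_0(\ell)}}$ by construction, so the residue degree of $\ell$ is at most $2$ however large $n_0(\ell)$ may be.

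With the controlling layer fixed, the remaining task is a power-residue test. For each surviving $\ell$ I would realize the controlling cyclotomic unit as an explicit product of factors $1 - \zeta_{3^{n_0(\ell)+1}}^a$, choose a prime $\lambda \mid \ell$ of $\mathbb{B}_{3,n_0(\ell)}$, and decide whether the unit is an $\ell$th power in the completion at $\lambda$; by the structure of the tower this local condition detects the global $\ell$-divisibility. Since $\ell \nmid \ell^f - 1$, the reduction modulo $\lambda$ is automatically an $\ell$th power, so the test must be carried out one level deeper — through the $\ell$-adic logarithm, or equivalently a congruence modulo a bounded power of $\lambda$ — inside a residue ring of size a small power of $\ell$. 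This is an exact finite computation, tractable for every $\ell < 10^9$ because the residue degree stays at most $2$, and I would verify in each case that the test is negative, giving $\ell \nmid h(\mathbb{B}_{3,n})$ for all $n$.

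The main obstacle is the rigour and sharpness of the reduction in the second step, not any individual computation. Washington's theorem asserts only eventual constancy; upgrading it to the explicit, provably correct controlling layer $n_0(\ell)$ — and proving that no contribution can leak in above it, so that a negative test at $n_0(\ell)$ genuinely settles every layer — demands a precise, character-by-character control of the $\Z_\ell$-module structure of $A_n \otimes \Z_\ell$ rather than a soft limiting argument. The secondary difficulty is scale: although the congruence $\ell \equiv \pm 1 \pmod{3^{n_0(\ell)}}$ keeps each residue degree small, one must still organize the power-residue criterion uniformly and execute it in exact arithmetic across the full range of primes below $10^9$, which is precisely what upgrades the conclusion from numerical evidence to a theorem.
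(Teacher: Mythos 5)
First, a point of orientation: the paper does not prove this statement at all. It is quoted as an external theorem of Fukuda, Komatsu and Morisawa \cite{Fukuda2}, so there is no proof in the paper to compare yours against; what follows measures your plan against what such a proof actually requires. In outline you do reconstruct the right strategy (for $p=3$ the layer $\mathbb{B}_{3,n}$ is the full maximal real subfield $\mathbb{Q}(\zeta_{3^{n+1}}+\zeta_{3^{n+1}}^{-1})$, so the index formula $h = [E_n : C_n]$ applies; one then wants an effective stabilization layer tied to $v_3(\ell^2-1)$ and a finite check per prime $\ell < 10^9$). But the central step of your argument is exactly the step you defer: the ``clean reduction'' asserting that $\ell \mid h(\mathbb{B}_{3,n})$ for some $n$ if and only if a single controlling cyclotomic unit at layer $n_0(\ell)$ fails an explicit $\ell$th-power condition. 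That reduction \emph{is} the theorem's entire non-computational content --- it is the effective refinement of Washington's theorem that Fukuda--Komatsu proved for the $\mathbb{Z}_2$-tower and Morisawa adapted to the $\mathbb{Z}_3$-tower --- and it does not follow from semisimplicity of the $G_n$-action on $A_n \otimes \mathbb{Z}_\ell$. Semisimplicity decomposes each layer's class group into eigenspaces; it says nothing about how the $\ell$-part at layer $n$ is controlled by the $\ell$-part at layer $n_0$. What is actually needed is an argument with the lifting and norm maps between class groups (their composite is multiplication by a power of $3$, hence invertible on $\ell$-parts, so the $\ell$-part is non-decreasing up the tower), combined with an ambiguous-class-number / genus-theory type bound showing the $\ell$-part cannot grow once the primes above $\ell$ stop splitting. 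Presenting this as a ``target'' leaves the theorem unproved.

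The second gap is a logical direction error in your verification step. You claim that whether the controlling unit is an $\ell$th power in the completion at a prime $\lambda \mid \ell$ ``detects the global $\ell$-divisibility.'' Being an $\ell$th power in one completion is a \emph{necessary} condition for being a global $\ell$th power, never a sufficient one, so the equivalence you assert is false: a positive local test proves nothing about the class number. Only the contrapositive is usable --- if the unit is not locally an $\ell$th power, it is not globally one --- and there is no guarantee this negative outcome occurs at your single chosen $\lambda$ even when $\ell \nmid h$; the unit may well be a local $\ell$th power at $\lambda$ while failing to be a global $\ell$th power. This is why the actual computations in this line of work test non-$\ell$th-power-ness modulo auxiliary split primes $q \equiv 1 \pmod{\ell}$, cycling through several such $q$ until an obstruction is found, rather than committing to one local test at $\ell$ itself (where, as you note, the obstruction must in any case be read at higher $\lambda$-adic precision). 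As written, then, your plan both rests on an unproved key lemma and, even granting that lemma, runs its decisive check in a direction that cannot yield the conclusion.
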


Extending Weber's theorem in a different direction, Ichimura and Nakajima \cite{Ichimura} proved a result concerning the parity of class numbers.
\begin{theorem}[Ichimura, Nakajima]\label{ParityClassNumber}
For all positive integers $n$ and primes $p < 500$, the class number of $\mathbb{B}_{p, n}$ is odd.
\end{theorem}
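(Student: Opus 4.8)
The plan is to isolate the $2$-part of the class group and run a genus-theoretic induction up the tower, with the one genuinely arithmetic input being a statement about how the prime $2$ decomposes. First I dispose of $p=2$ by Weber's theorem, so assume $p$ is an odd prime with $p<500$. Write $K_n=\mathbb{B}_{p,n}$; this is a totally real field, cyclic of degree $p^n$ over $\mathbb{Q}$, in which $p$ is totally ramified and is the only ramified prime. Let $A_n$ denote the $2$-Sylow subgroup of $\operatorname{Cl}(K_n)$; the goal is to show $A_n=0$ for all $n$, and I induct on $n$, the case $n=0$ being $h(\mathbb{Q})=1$.

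For the inductive step I would apply Chevalley's ambiguous class number formula to the cyclic degree-$p$ extension $K_n/K_{n-1}$ with Galois group $\Gamma=\langle\sigma\rangle$. Only the prime above $p$ ramifies, and it does so totally, so the ramification contribution is exactly $e=p=[K_n:K_{n-1}]$; since $p$ is odd the unit-norm index $[E_{n-1}:E_{n-1}\cap N_{K_n/K_{n-1}}K_n^\times]$ is killed by $p$, hence is a power of $p$ and in particular odd. Therefore the $2$-part of $\lvert \operatorname{Cl}(K_n)^\Gamma\rvert$ equals the $2$-part of $h(K_{n-1})$, which is $1$ by the inductive hypothesis, so $A_n^\Gamma=0$. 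The subtlety is that $A_n^\Gamma=0$ does not force $A_n=0$: a finite $2$-group on which $\Gamma\cong\mathbb{Z}/p$ acts can have trivial invariants precisely when $A_n[2]$ is a sum of nontrivial irreducible $\mathbb{F}_2[\Gamma]$-modules, each of dimension $f=\operatorname{ord}_p(2)$. Ruling out such a fixed-point-free piece is the crux.

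To close this gap I would bring in the decomposition of $2$. The key observation is that $2$ fails to split completely in $K_1/\mathbb{Q}$ exactly when $2^{p-1}\not\equiv 1\pmod{p^2}$, i.e.\ exactly when $p$ is \emph{not} a Wieferich prime; since the smallest Wieferich prime is $1093$, this holds for every $p<500$. In that case $\operatorname{Frob}_2$ generates $\operatorname{Gal}(K_n/\mathbb{Q})$, so $2$ is inert and the unique prime $\mathfrak{q}$ above it is fixed by the whole Galois group; its class therefore lies in $A_n^{\operatorname{Gal}(K_n/\mathbb{Q})}\subseteq A_n^\Gamma=0$, so $\mathfrak{q}$ is principal. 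Feeding this non-splitting into the Iwasawa-module description of the $2$-part --- valid because $2\neq p$, so that $\varprojlim A_n$ is a \emph{finite} module over $\mathbb{Z}_2[[\operatorname{Gal}(K_\infty/\mathbb{Q})]]$ --- controls the level at which Washington's stabilization sets in, reducing the assertion ``$A_n=0$ for all $n$'' to its verification at a single bounded layer.

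The remaining step is that base computation: for each odd $p<500$ one checks directly (via the analytic class number formula together with the index $[E_n:C_n]$ of cyclotomic units, whose $2$-adic valuation records the parity of $h(K_n)$) that $h(\mathbb{B}_{p,1})$ is odd. I expect the main obstacle to be precisely the elimination of the fixed-point-free Galois action in the second step --- this is exactly the phenomenon that genus theory cannot detect, and it is what makes the non-Wieferich hypothesis, and hence the restriction on $p$, unavoidable; the bound $p<500$, rather than $p<1093$, should reflect the reach of the auxiliary class number computation rather than a limitation of the decomposition argument itself.
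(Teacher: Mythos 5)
You should know at the outset that the paper contains no proof of Theorem \ref{ParityClassNumber}: it is quoted as an external result of Ichimura and Nakajima \cite{Ichimura}, so your proposal has to stand entirely on its own. Much of its scaffolding is sound. Weber disposes of $p=2$; the ambiguous class number formula for $K_n/K_{n-1}$ (one totally ramified prime, unit-norm index killed by the odd prime $p$) does give, inductively, that the $2$-part of $\operatorname{Cl}(K_n)^{\Gamma}$ is trivial; your Wieferich observation is correct ($2$ splits in $\mathbb{B}_{p,1}$ iff $2^{p-1}\equiv 1\pmod{p^2}$, and non-Wieferich at the first layer does force $2$ to be inert in every layer); and you correctly isolate the crux, namely that $A_n^{\Gamma}=0$ says nothing about fixed-point-free $\mathbb{F}_2[\Gamma]$-components.

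The gap is that your third step never closes that crux: "feeding the non-splitting of $2$ into the Iwasawa-module description" so as to "control the level at which Washington's stabilization sets in" is an aspiration, not an argument. Washington's theorem, as quoted in the paper, is ineffective --- it gives no bound on where $e_n$ stabilizes --- and nothing you have written converts inertness of $2$ into such a bound. Worse, splitting data at $2$ is in principle invisible to exactly the components you must kill. Concretely: let $H$ be the $2$-Hilbert class field of $K_n$, so $\operatorname{Gal}(H/\mathbb{Q})\cong A_n\rtimes G$ by Schur--Zassenhaus, with $G=\operatorname{Gal}(K_n/\mathbb{Q})$. A Frobenius at $2$ has the form $(x,g)$ with $g$ generating $G$, and its $p^n$-th power is $(N_G\cdot x,1)$ with $N_G\cdot x\in A_n^{G}=0$; hence the inert prime splits completely in $H/K_n$ and no contradiction arises, no matter how large the fixed-point-free module $A_n$ is. (This also corrects a smaller slip: Galois-invariance of the class of $\mathfrak{q}$ only shows its $2$-primary component vanishes, i.e.\ the class has odd order, not that $\mathfrak{q}$ is principal.) So the actual content of the Ichimura--Nakajima theorem --- controlling \emph{all} layers $n$ --- is exactly what is missing: it requires arithmetic input beyond genus theory and decomposition groups (in essence, relating the $2$-part of $h_n$ to indices/signatures of cyclotomic units, proving a stabilization statement that propagates oddness from explicitly determined low layers to all layers, and then performing a nontrivial computation for each odd prime $p<500$). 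Your final paragraph gestures at the computation for $\mathbb{B}_{p,1}$, but it presupposes the reduction to finitely many layers, and that reduction is the theorem's real difficulty, not a consequence of the ingredients you have assembled.
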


Also, in unpublished work by T. Fukuda and his collaborators, they have calculated that the prime $2p+1$ does not divide the class number of $\mathbb{B}_{p, 1}$ for Sophie Germain primes $p$ less than 1000 (T. Fukuda, private communication, July 25, 2014). 

Notwithstanding the considerable interest in $\mathbb{Z}_p$-extensions and their class numbers, the precise class number has been successfully determined for only a very few of the $\mathbb{B}_{p, n}$, due to the inherent difficulties of calculating the class group for number fields of large discriminant and degree.  The previously known results are summarized in Table \ref{ClassNumbers}.  All the known class numbers are 1.

\begin{center}
\begin{table}
\caption{$\mathbb{B}_{p, n}$ with previously known class numbers}
\begin{tabular}{@{}  l l l @{}}
\hline
  $\mathbb{B}_{p, n}$ & \bf{Class number} &\\
\hline
   $\mathbb{B}_{2, 3}$     	& 1      			& Cohn (1960) \cite{Cohn}\\
   $\mathbb{B}_{2, 4}$     	& 1      			& Bauer (1969) \cite{Bauer}\\
   $\mathbb{B}_{2, 5}$     	& 1      			& van der Linden (1982) \cite{Linden}\\
   $\mathbb{B}_{2, 6}$     	& 1      			& Miller (2013) \cite{MillerWeber}\\
   $\mathbb{B}_{2, 7}$     	& 1 (under GRH)     	& Miller (2013)\\
   $\mathbb{B}_{3, 2}$     	& 1      			& Bauer (1969)\\
   $\mathbb{B}_{3, 3}$     	& 1      			& Masley (1978) \cite{Masley}\\
   $\mathbb{B}_{3, 4}$     	& 1 (under GRH)      	& van der Linden (1982)\\
   $\mathbb{B}_{5, 1}$     	& 1      			& Bauer (1969)\\
   $\mathbb{B}_{5, 2}$     	& 1      			& Miller (2014) \cite{MillerComp}\\
   $\mathbb{B}_{7, 1}$     	& 1      			& Bauer (1969)\\
   $\mathbb{B}_{11, 1}$     	& 1      			& Miller (2014)\\
   $\mathbb{B}_{13, 1}$     	& 1 (under GRH)      	& van der Linden (1982)\\
   $\mathbb{B}_{17, 1}, \mathbb{B}_{19, 1}, \mathbb{B}_{23, 1}$     	& 1 (under GRH)      	& Coates, Liang, Mih\v ailescu (2012) \cite{Coates}\\
\hline
\end{tabular}
\label{ClassNumbers}
\end{table}
\end{center}

By using the analytic upper bound for the class number developed in \cite{MillerWeber}, we will prove unconditionally that several more fields have class number 1.

\begin{theorem}\label{NewClassNumbers}
The fields $\mathbb{B}_{13, 1}$, $\mathbb{B}_{17, 1}$ and $\mathbb{B}_{19, 1}$ have class number $1$.
\end{theorem}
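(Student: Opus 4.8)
The plan is to establish an explicit \emph{unconditional} upper bound for the class number $h$ of each field and then to show that no prime below that bound can divide $h$. Each $\mathbb{B}_{p,1}$ with $p\in\{13,17,19\}$ is the totally real cyclic field of degree $p$ inside $\mathbb{Q}(\zeta_{p^2})$; it is totally ramified at $p$, unramified elsewhere, with conductor $p^2$ and discriminant $d=p^{2(p-1)}$. Its Dedekind zeta function factors as $\zeta_K(s)=\zeta(s)\prod_{\chi\neq 1}L(s,\chi)$, where $\chi$ runs over the $p-1$ nontrivial even Dirichlet characters of conductor $p^2$ and order $p$. First I would invoke the analytic class number formula in the form
$$h R = \frac{\sqrt{d}}{2^{p-1}}\,\kappa,\qquad \kappa=\op{Res}_{s=1}\zeta_K(s)=\prod_{\chi\neq 1}L(1,\chi),$$
where $R$ is the regulator and the factor $2^{p-1}$ reflects $r_1=p$, $r_2=0$, $w_K=2$. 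The right-hand side equals the regulator $R_{\mathrm{cyc}}$ of the cyclotomic units and can be computed rigorously: each $L(1,\chi)$ admits a closed expression in terms of logarithms of cyclotomic units and is evaluated to high precision with a rigorous error term. This pins down the product $hR$ essentially exactly.

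To convert $hR$ into a bound on $h$ alone, I would apply the unconditional machinery of \cite{MillerWeber}: an explicit upper bound for the residue $\kappa$ together with an unconditional lower bound $R_{\mathrm{min}}$ for the regulator of a totally real field of degree $p$. Combining these yields $h\le \sqrt{d}\,\kappa/(2^{p-1}R_{\mathrm{min}})=:B$, an effective and, crucially, GRH-free bound. The expectation is that $B$ is small, so that only a handful of primes, if any, fall below it.

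Finally I would rule out every prime $q\le B$ as a divisor of $h$. If $B<2$ the conclusion $h=1$ is immediate; otherwise the prime $q=p$ is excluded by Iwasawa's Theorem \ref{Iwasawa}, and $q=2$ by the Ichimura--Nakajima Theorem \ref{ParityClassNumber}, which applies since $p<500$. Any remaining small prime $q$ is eliminated by a class-field-theoretic argument: since $K/\mathbb{Q}$ is cyclic and ramified only at the totally ramified prime $p$, its genus field is trivial, and one checks that the splitting of $q$ in $K$ forbids an unramified cyclic extension of degree $q$ (equivalently, one computes the $q$-rank directly and finds it zero). With all $q\le B$ excluded, $h=1$.

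The main obstacle is the regulator lower bound. Because the discriminant $d=p^{2(p-1)}$ is enormous, a weak estimate for $R_{\mathrm{min}}$ would leave $B$ far too large to finish, and the GRH-dependent residue bounds used in the earlier work of van der Linden and of Coates--Liang--Mih\v ailescu are unavailable here. The crux is therefore to make both the residue bound and the regulator minoration sharp enough, unconditionally, that $B$ drops into the range where Theorems \ref{Iwasawa} and \ref{ParityClassNumber} close the argument; this is precisely the content of the analytic bound developed in \cite{MillerWeber}.
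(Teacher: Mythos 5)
There is a genuine gap, and it sits exactly where you placed your hopes: the regulator lower bound. Your reduction $h \le \sqrt{d}\,\kappa/(2^{p-1}R_{\mathrm{min}})$ needs an unconditional lower bound for $R$ that is within a modest factor of the \emph{true} regulator, and no such bound exists for fields of this size -- nor is it what \cite{MillerWeber} provides. The product $hR=\sqrt{d}\,\kappa/2^{p-1}$ (equivalently, the cyclotomic-unit regulator) is astronomical here: for $\mathbb{B}_{19,1}$ one has $\sqrt{d}=19^{18}\approx 10^{23}$, so $hR$ is of order $10^{17}$. The best general unconditional regulator minorations (Zimmert/Friedman type) grow only like a small constant times $e^{c r_1}$ with $c<0.6$, giving $R_{\mathrm{min}}$ in the low hundreds for degree $19$; your bound $B$ would then be roughly $10^{15}$. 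No finishing argument can sieve out all primes below such a bound -- in particular, primes $q\equiv 1\pmod p$ have $f=1$, so rank-type theorems impose no constraint on them. And sharpening $R_{\mathrm{min}}$ to the truth is circular: since $[E:C]=h$ for these fields, a tight lower bound on $R$ is equivalent to an upper bound on the cyclotomic-unit index, i.e.\ to the class number computation itself. This is precisely the obstruction that makes Pari-style unconditional computations infeasible at degree $17$ and $19$.

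The actual mechanism of Lemma \ref{SplitPrimeLemmaNoRH} is different and avoids regulators entirely: it is an explicit-formula (Odlyzko-style) inequality applied to the Dedekind zeta function of the \emph{Hilbert class field} $H$ of $K$, whose root discriminant equals $\operatorname{rd}(K)$ because $H/K$ is unramified. A rational prime that splits completely into \emph{principal} prime ideals of $K$ splits completely in $H$, and each such prime contributes a positive term to the quantity $B$ in the lemma, tightening the bound $h<2c\sqrt{\pi}/(nB)$. The real work in the paper is therefore a search problem, not an analytic one: using the explicit integral basis of $\mathbb{B}_{p,1}$, one finds algebraic integers of small prime norm (two elements, of norms $19$ and $23$, suffice for $p=13$; for $p=17$ and $19$ one needs thousands of split primes, harvested from sparse $\{-1,0,1\}$-combinations of basis elements together with a quotient trick for elements of norm $pq$). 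This yields $h\le 7$, $h\le 5$, $h\le 38$ respectively, after which the Rank Theorem (if $q\neq p$ divides $h$ then $q^f\mid h$, with $f$ the order of $q$ modulo $p$) combined with Theorem \ref{Iwasawa} forces $h=1$. Note finally that your genus-field argument for the leftover small primes is not sufficient as stated: triviality of the genus field only rules out unramified extensions of $K$ that are abelian over $\mathbb{Q}$, and ``computing the $q$-rank directly'' is the very problem being solved; the Rank Theorem is the tool that actually closes that step.
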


Furthermore, using SageMathCloud\textsuperscript{TM} \cite{SageMathCloud}, which calls underlying routines in the Pari library \cite{Pari}, we calculated that the fields $\mathbb{B}_{29, 1}$ and $\mathbb{B}_{31, 1}$ each have class number 1, under the assumption of the generalized Riemann hypothesis.  We summarize these results in Table \ref{ClassNumbersNew}.

\begin{center}
\begin{table}
\caption{$\mathbb{B}_{p, n}$  with new class number results}
\begin{tabular}{@{}  l l l  @{}}
\hline
  $\mathbb{B}_{p, n}$ & \bf{Class number} &\\
\hline
   $\mathbb{B}_{13, 1}, \mathbb{B}_{17, 1}, \mathbb{B}_{19, 1}$     	& 1      	& Theorem \ref{NewClassNumbers}\\
   $\mathbb{B}_{29, 1}, \mathbb{B}_{31, 1}$     	& 1 (under GRH)      	& Pari via SageMathCloud\textsuperscript{TM}\\
\hline
\end{tabular}
\label{ClassNumbersNew}
\end{table}
\end{center}

Coates \cite{Coates} posed the following natural question: Do all $\mathbb{B}_{p, n}$ have class number 1?  Unfortunately, resolving such a question is far beyond our current capabilities.  Indeed, even proving that there are infinitely many number fields with class number 1 is one of the great open problems of number theory.  On the other hand, the class group heuristics developed by Cohen and Lenstra \cite{CohenLenstra1, CohenLenstra2} provide a means by which we can attempt to estimate the probability that one or more of the $\mathbb{B}_{p, n}$ have nontrivial class groups.

The first steps in this direction were carried out by Buhler, Pomerance and Robertson \cite{Buhler}.  By proving a certain quadruple sum is finite, they were led to the following conjecture.

\begin{conjecture}[Buhler, Pomerance, Roberston]
For all but finitely many pairs $(p,n)$, where $p$ is a prime and $n$ is a positive integer, the class number of the real cyclotomic field of conductor $p^{n+1}$ is equal to the class number of the real cyclotomic field of conductor $p^n$, i.e.
$$h\left(\mathbb{Q}(\zeta_{p^{n+1}}+\zeta_{p^{n+1}}^{-1})\right) = h\left(\mathbb{Q}(\zeta_{p^n}+\zeta_{p^n}^{-1})\right).$$
\end{conjecture}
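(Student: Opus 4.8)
Because the displayed statement is a \emph{conjecture} extracted from a Cohen--Lenstra heuristic, the realistic goal is not an unconditional proof but a reduction of the conjecture to the convergence of an explicit sum, after which it follows from a Borel--Cantelli-style heuristic; what one can actually \emph{prove} is the finiteness of the expected number of exceptional pairs $(p,n)$. The plan is to isolate the ``new'' part of the class number at each layer, bound the primes that can divide it together with their heuristic probabilities, and then show the total expected contribution is finite.

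First I would write $h_n^+ = h\bigl(\mathbb{Q}(\zeta_{p^n}+\zeta_{p^n}^{-1})\bigr)$ and reduce to a divisibility. Since $\mathbb{Q}(\zeta_{p^{n+1}})^+/\mathbb{Q}(\zeta_{p^n})^+$ is totally ramified at the prime above $p$, while the Hilbert class field is everywhere unramified, the two fields are linearly disjoint over $\mathbb{Q}(\zeta_{p^n})^+$; hence the norm induces a surjection $\mathrm{Cl}(\mathbb{Q}(\zeta_{p^{n+1}})^+)\twoheadrightarrow \mathrm{Cl}(\mathbb{Q}(\zeta_{p^n})^+)$, so that $h_n^+\mid h_{n+1}^+$. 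Setting $h_n^\ast = h_{n+1}^+/h_n^+$, the conjecture becomes the assertion that $h_n^\ast=1$ for all but finitely many pairs $(p,n)$.

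Next I would determine which primes $\ell$ can divide $h_n^\ast$, and with what heuristic probability. Iwasawa's theorem (Theorem~\ref{Iwasawa}) rules out $\ell=p$, so attention is restricted to $\ell\neq p$. For such $\ell$ the $\ell$-part of the new class group is a module over $\mathbb{Z}_\ell[\mathrm{Gal}]$ and decomposes along the characters $\chi$ of $\mathrm{Gal}(\mathbb{Q}(\zeta_{p^{n+1}})^+/\mathbb{Q})$ that are new at level $n+1$; on the $\chi$-eigenspace the Cohen--Lenstra philosophy predicts that $\ell$ contributes with probability of size roughly $\ell^{-f_\chi}$, where $f_\chi$ is the order of $\ell$ modulo the order of $\chi$. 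The crucial structural input is that a contribution with small residue degree $f_\chi$ forces a congruence of the form $\ell\equiv\pm1\pmod{p^n}$, so that the admissible primes are simultaneously large ($\ell\gtrsim p^n$) and sparse.

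Finally I would assemble the resulting quadruple sum
$$\sum_{p}\ \sum_{n\ge1}\ \sum_{\ell\neq p}\ \sum_{\chi\text{ new}} P(p,n,\ell,\chi)$$
of these per-character probabilities and show it converges. The point is that for most $\ell$ the residue degrees $f_\chi$ are large, so $\ell^{-f_\chi}$ is negligible, while the few $\ell$ with small $f_\chi$ are cut down by the congruence $\ell\equiv\pm1\pmod{p^n}$; balancing these two effects against the number of new characters is exactly where the delicate estimate lies, and is the main rigorous obstacle, since one must force the inner sums to decay fast enough in $p^n$ that the outer double sum over $(p,n)$ converges rather than diverging. Granting convergence, a Borel--Cantelli heuristic yields that only finitely many $(p,n)$ are exceptional. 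The conceptual obstacle, and the reason the statement is only conjectural, is that these real cyclotomic fields are highly non-generic: their class groups are constrained by the (conjecturally trivial) $p$-part, by the unit group and regulator, and by the full Galois-module structure, so the Cohen--Lenstra model is not known to apply and the passage from finite expectation to an actual finite exceptional set is not itself a theorem.
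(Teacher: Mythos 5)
Your overall framing is the right one, and it matches how the paper actually treats this statement: the conjecture is not proved here (nor by Buhler--Pomerance--Robertson, nor by anyone); what is established is that a certain quadruple sum --- the heuristic expected number of exceptional pairs $(p,n)$ --- is finite, and the conjecture is then inferred in a Borel--Cantelli spirit. Your reduction $h_n^+\mid h_{n+1}^+$ via total ramification above $p$ and linear disjointness from the Hilbert class field is correct and standard; your use of Iwasawa's theorem to discard $\ell=p$ is exactly the paper's; and your decomposition into characters new at each layer is the same as the paper's decomposition into simple $\mathbb{Z}[G]/\langle N\rangle$-modules, i.e.\ prime ideals of $\mathbb{Z}[\zeta_{p^j}]$ above $\ell$, of which there are $\phi(p^j)/f$ --- matching your count of character orbits. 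The paper's quadruple sum over $(p,q,j,k)$ is your sum over $(p,n,\ell,\chi)$ after this regrouping.

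However, there is one concrete error, and it is fatal to the convergence argument as you set it up. For totally real fields the Cohen--Lenstra prediction for the occurrence of a simple factor $M$ is
$$1-\prod_{k\ge 2}\bigl(1-|M|^{-k}\bigr)\approx |M|^{-2}=\ell^{-2f_\chi},$$
not $\ell^{-f_\chi}$: the product starts at $k=2$ because the class group is modeled as a random module modulo a \emph{random cyclic submodule} (the unit-rank correction for totally real fields). That factor of $2$ in the exponent is precisely what makes the sum finite. With your stated probability $\ell^{-f_\chi}$, already the $f_\chi=1$ portion for a \emph{single} fixed pair $(p,n)$ is of size $\phi(p^n)\sum_{\ell\equiv 1\ (\bmod\ p^n)}\ell^{-1}$, which diverges, since the reciprocals of the primes in an arithmetic progression have divergent sum; hence your quadruple sum is $+\infty$ and no Borel--Cantelli-type conclusion can be drawn from it. With the correct exponent $-2f_\chi$, the inner sums converge for each $(p,n)$, and the remaining difficulty --- the one you correctly flag as delicate --- is that summing the $f_\chi=1$ terms over $p$ needs better than the trivial bound $\sum_{\ell\equiv 1\ (\bmod\ p)}(p-1)/\ell^2=O(1/p)$; this is exactly where the paper (Sections 10--11) imports the explicit sieve bounds of Siebert and Riesel--Vaughan on primes of the form $ap+1$ to gain the needed $(\log p)^2$ savings. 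A smaller correction: a small residue degree forces $\ell^{f_\chi}\equiv 1\pmod{p^n}$, so $f_\chi=1$ means $\ell\equiv 1$, whereas $\ell\equiv -1$ corresponds to $f_\chi=2$; writing $\ell\equiv\pm1$ conflates the two cases.
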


A consequence of this conjecture is that the class number of $\mathbb{B}_{p, n}$ is equal to the class number of $\mathbb{B}_{p, n-1}$ for all but finitely many pairs $(p,n)$.

In this paper, we will study a similar quadruple sum and find an \emph{explicit} upper bound for the sum, which essentially estimates the expected number of exceptional pairs $(p,n)$.  This upper bound is then further refined by known results on class numbers of the $\mathbb{B}_{p, n}$, including the new results established in this paper.  In fact, this refined upper bound is approximately $0.30$, which is sufficiently smaller than 1 to suggest the following conjecture.

\begin{mainconjecture}\label{Main Conjecture}
For any prime $p$ and positive integer $n$, the class number of $\mathbb{B}_{p, n}$ is 1.
\end{mainconjecture}

In order to rigorously establish an upper bound for the quadruple sum, an explicit sieve theory result of Siebert  \cite{Siebert} and an improvement of Riesel and Vaughan \cite{RieselVaughan} will prove vital.  Along the way, we will also prove an upper bound for the sum of the reciprocals of the Sophie Germain primes, which surprisingly does not yet exist in the literature.

The rest of the paper is organized as follows:  In Section 2 we will discuss upper bounds on class numbers and prove Theorem \ref{NewClassNumbers}.  The application of Cohen-Lenstra heuristics to real cyclic number fields will be discussed in detail in Section 3.  In Section 4 we specialize the Cohen-Lenstra heuristics to $\mathbb{Z}_p$-extensions.  Sections 5, 6 and 7 study the $\mathbb{Z}_2$-, $\mathbb{Z}_3$- and $\mathbb{Z}_5$-extensions respectively, and Section 8 concerns the $\mathbb{Z}_p$-extensions for $7 \leq p \leq 31$.  Section 9 discusses the first layers of $\mathbb{Z}_p$-extensions.  In Section 10, we introduce the explicit sieve results and apply them to bounds on sums over Sophie Germain and similar primes.  Using those bounds, in Section 11 we return to the discussion of the first layers.  We study the higher layers in Section 12. The results are summarized in Section 13.

\section{New results on certain $\mathbb{B}_{p, n}$}
In his 1982 paper \cite{Linden}, van der Linden employed Odlyzko's discriminant lower bounds \cite{OdlyzkoTable} to prove, conditionally upon the generalized Riemann hypothesis (GRH), that the class number of $\mathbb{B}_{13,1}$ is 1.  However, the root discriminant of $\mathbb{B}_{13,1}$ is too large for the class number to be calculated unconditionally using van der Linden's methods, and the field $\mathbb{B}_{17,1}$ has root discriminant too large to be treated by those methods, even under the assumption of GRH.
In the author's recent paper \cite{MillerWeber}, an analytic class number upper bound was developed that employed counting prime ideals of the Hilbert class field.  Using this new bound, it is possible to unconditionally determine the class number of number fields of discriminant too large to have been treated by previously known methods.

\begin{definition}
Let $K$ denote a number field of degree $n$ over $\mathbb Q$.  Let $d(K)$ denote its discriminant.  The \emph{root discriminant} $\operatorname{rd}(K)$ of $K$ is defined to be:
\[\operatorname{rd}(K)=|d(K)|^{1/n}.\]
\end{definition}

We have the following unconditional upper bound for class numbers.

\begin{lemma}[Miller \cite{MillerWeber}]\label{SplitPrimeLemmaNoRH}
Let $K$ be a totally real field of degree $n$, and let
$$F(x) = \frac{e^{-\left(x/c\right)^2}}{\cosh(x/2)}$$
for some positive constant $c$.  Suppose $S$ is a subset of the primes which totally split into principal prime ideals of $K$.  Let
\begin{align*}
B = \frac{\pi}{2} &+ \gamma + \log{8\pi}  - \log \operatorname{rd}(K) - \int_0^\infty \frac{1-F(x)}{2}\left( \frac{1}{\sinh(x/2)} + \frac{1}{\cosh(x/2)} \right) \, dx \\&+ 2 \sum_{p \in S} \sum_{m=1}^\infty \frac{\log p}{p^{m/2}}F(m \log p),
\end{align*}
where $\gamma$ is Euler's constant.  If $B > 0$, then the class number $h$ of $K$ has an upper bound
$$h < \frac{2c \sqrt{\pi}}{nB}.$$
\end{lemma}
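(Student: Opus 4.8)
The plan is to apply Weil's explicit formula not to $K$ but to its Hilbert class field $H$, exploiting two features of the (everywhere) unramified extension $H/K$. First, writing $N=[H:\Q]=nh$ where $h$ is the class number, $H$ is again totally real, its relative discriminant is trivial, and hence $\operatorname{rd}(H)=\operatorname{rd}(K)$ and $\log|d(H)|=N\log\operatorname{rd}(K)$. Second, by class field theory a prime of $K$ splits completely in $H$ exactly when it is principal; thus if $p\in S$, then $p$ factors in $K$ into $n$ principal degree-one primes, each of which splits completely in $H$, so $p$ has precisely $N=nh$ primes of $H$ above it, all of residue degree one and norm $p$. This is the mechanism by which $h$ enters the prime side of the formula.

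I would then pair the completed zeta function $\Lambda_H(s)=|d(H)|^{s/2}\bigl(\pi^{-s/2}\Gamma(s/2)\bigr)^{N}\zeta_H(s)$ against the transform $\Phi(s)=\int_{-\infty}^{\infty}F(x)e^{(s-1/2)x}\,dx$. The simple poles of $\Lambda_H$ at $s=0,1$ contribute $\Phi(0)+\Phi(1)=\int_{-\infty}^{\infty}F(x)\cdot 2\cosh(x/2)\,dx$, and here the denominator $\cosh(x/2)$ of $F$ cancels exactly, leaving $2\int_{-\infty}^{\infty}e^{-(x/c)^2}\,dx=2c\sqrt{\pi}$. The discriminant enters with coefficient $F(0)=1$, giving $\log|d(H)|=N\log\operatorname{rd}(K)$, and the $N$ identical Gamma factors contribute $N$ times the archimedean distribution; evaluating the latter against $F$ via the integral representation of $\Gamma'/\Gamma$ is what produces $\tfrac{\pi}{2}+\gamma+\log 8\pi$ together with the regularized integral in $B$, the factor $1-F(x)$ (vanishing to second order at $0$, as $F$ is even with $F(0)=1$) cancelling the singularity of $1/\sinh(x/2)$. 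Collecting terms gives the identity
\begin{align*}
2c\sqrt{\pi} &= \sum_{\rho}\Phi(\rho) + 2\sum_{\mathfrak{P}}\sum_{m=1}^{\infty}\frac{\log N\mathfrak{P}}{N\mathfrak{P}^{m/2}}F(m\log N\mathfrak{P}) \\
&\quad + N\left(\frac{\pi}{2}+\gamma+\log 8\pi-\log\operatorname{rd}(K)-\int_{0}^{\infty}\frac{1-F(x)}{2}\left(\frac{1}{\sinh(x/2)}+\frac{1}{\cosh(x/2)}\right)dx\right),
\end{align*}
where $\rho$ ranges over the nontrivial zeros of $\zeta_H$ and $\mathfrak{P}$ over all primes of $H$.

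To finish I would discard terms in the right direction. Restricting the prime sum to the primes above $S$ turns $2\sum_{\mathfrak{P}}$ into $2N\sum_{p\in S}\sum_{m}\frac{\log p}{p^{m/2}}F(m\log p)$ (using the $N$ degree-one primes over each such $p$), and every discarded prime term is nonnegative since $F\ge0$. The decisive step is to drop $\sum_{\rho}\Phi(\rho)$: grouping zeros into the quadruples $\{\rho,1-\rho,\bar\rho,1-\bar\rho\}$ forced by the functional equation and real coefficients, each group contributes $4\int_{-\infty}^{\infty}F(x)\cosh\!\bigl((\beta-\tfrac12)x\bigr)\cos(\gamma x)\,dx$ for $\rho=\beta+i\gamma$, which is nonnegative for every $\beta\in(0,1)$ because $x\mapsto F(x)\cosh\!\bigl((\beta-\tfrac12)x\bigr)$ has a nonnegative Fourier transform. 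Combining the two remaining lower bounds collapses the displayed right-hand side to $N\cdot B=nhB$, so $2c\sqrt{\pi}\ge nhB$; since the zero sum is in fact strictly positive and $B>0$, dividing yields $h<\dfrac{2c\sqrt{\pi}}{nB}$.

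The main obstacle is twofold. The routine but delicate part is the archimedean computation: verifying that the $N$ real Gamma factors produce exactly $\tfrac{\pi}{2}+\gamma+\log 8\pi$ together with the stated regularized integral. The essential part is justifying the unconditional positivity $\sum_{\rho}\Phi(\rho)\ge0$; unlike bounds that assume GRH, no information on the location of the zeros is used, so everything rests on the positive-definiteness of $F$ \emph{across the entire critical strip}. Concretely, one must show that $\cosh\!\bigl((\beta-\tfrac12)x\bigr)/\cosh(x/2)$ has a nonnegative Fourier transform for all $|\beta-\tfrac12|<\tfrac12$, after which convolving with the (Gaussian) transform of $e^{-(x/c)^2}$ preserves nonnegativity. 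This is precisely the reason the Gaussian factor is inserted into $F$, and it is the technical heart of the argument.
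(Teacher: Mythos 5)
Your proposal is correct and is essentially the same argument as the actual proof, which the present paper does not reproduce but quotes from \cite{MillerWeber}: there, too, the Weil--Poitou explicit formula is applied to the Hilbert class field $H$, using $\operatorname{rd}(H)=\operatorname{rd}(K)$, the fact that each $p\in S$ lies under exactly $nh$ degree-one primes of $H$, the pole contribution $\Phi(0)+\Phi(1)=2c\sqrt{\pi}$, and the unconditional nonnegativity of the zero sum coming from the positive Fourier transform of $\cosh(bx)/\cosh(x/2)$ for $|b|<\tfrac12$ convolved with a Gaussian. Both your displayed identity and your positivity mechanism match the source, so there is nothing to correct.
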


All the terms in the above lemma can be calculated explicitly, including the integral which can be estimated by numerical integration.  The strategy is to find sufficiently many generators of totally split principal prime ideals, so that we have sufficiently many primes to include in our set $S$.  Then we can apply the lemma and establish an upper bound for the class number.  The reader is invited to consult \cite{MillerWeber} for more details regarding this method.

After finding upper bounds, we will unconditionally prove that $\mathbb{B}_{13,1}$, $\mathbb{B}_{17,1}$ and $\mathbb{B}_{19,1}$ have class number 1.

To find a generator of a principal ideal, it is useful to have an integral basis.  The field $\mathbb{B}_{p,1}$ is the degree $p$ subfield of the cyclotomic field of conductor $p^2$, which is cyclic.  If $\sigma$ generates the Galois group of the cyclotomic field, then the Galois subgroup generated by $\sigma^p$ fixes the subfield $\mathbb{B}_{p,1}$.  Let $\zeta = \exp(2\pi i/p^2)$.  Given the generator $\sigma$, we'll use $\{b_0, b_1, \dots, b_{p-1}\}$ as our integral basis, where $b_0 = 1$, and
$$b_j = \sum_{k=0}^{p-2} \zeta^{\sigma^{kp+j-1}}$$
for $1 \leq j \leq p-1$.

After using Lemma \ref{SplitPrimeLemmaNoRH} to establish an upper bound for the class number, we will use the following version of the Rank Theorem (see Masley \cite{Masley}, Cor 2.15) to pin down the exact class number.

\begin{ranktheorem}\label{Rank Theorem}
Let $K$ be a number field of prime degree $p$ over the rationals.  If a prime $q$ not equal to $p$ divides the class number of $K$, then $q^f$ divides the class number, where $f$ is the order of $q$ modulo $p$.
\end{ranktheorem}

Now we will prove our results for $\mathbb{B}_{13,1}$, $\mathbb{B}_{17,1}$ and $\mathbb{B}_{19,1}$.

\begin{proposition}
The class number of $\mathbb{B}_{13,1}$ is $1$.
\end{proposition}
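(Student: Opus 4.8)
The plan is to apply Lemma \ref{SplitPrimeLemmaNoRH} to $K = \mathbb{B}_{13,1}$ with a carefully chosen set $S$ of totally split principal primes, producing an upper bound for $h = h(K)$ that is strictly less than $27$, and then to invoke the Rank Theorem to conclude $h = 1$. First I would record the basic invariants. The field $K$ is totally real of degree $n = 13$, and since it is the degree-$13$ subfield of $\mathbb{Q}(\zeta_{169})$, the conductor--discriminant formula (the $12$ nontrivial characters each have conductor $169$) gives $|d(K)| = 13^{24}$, so that $\operatorname{rd}(K) = 13^{24/13} \approx 113.8$ and $\log \operatorname{rd}(K) \approx 4.74$. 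A rational prime $\ell \neq 13$ splits completely in $K$ precisely when $\ell$ is a $13$th power modulo $169$, equivalently when $\ell^{12} \equiv 1 \pmod{169}$.

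The core of the argument is the construction of $S$. Rather than test the splitting condition in the abstract, I would search over small integer combinations $\alpha = \sum_{j=0}^{12} a_j b_j$ in the given integral basis and compute the norm $N_{K/\mathbb{Q}}(\alpha)$; whenever $|N_{K/\mathbb{Q}}(\alpha)|$ is a prime $\ell$, the ideal $(\alpha)$ is a degree-one prime above $\ell$, so $\ell$ splits completely and the primes above it are all principal (they are Galois conjugate, hence all principal once one of them is). Collecting the primes $\ell$ arising in this way yields an explicit set $S$ together with witnesses for principality. Since the contribution $2\sum_{\ell \in S}\sum_{m \geq 1} (\log \ell)\, \ell^{-m/2} F(m\log \ell)$ to $B$ is dominated by the smallest members of $S$, the aim is to find enough small split principal primes to make this sum sizeable.

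With $S$ in hand I would evaluate $B$. The constant part $\tfrac{\pi}{2} + \gamma + \log 8\pi - \log\operatorname{rd}(K)$ is approximately $0.64$; the integral term is estimated by numerical integration, and the parameter $c$ is tuned to balance the Gaussian damping in $F$ against the magnitude of the prime sum so as to maximize $B$ subject to $B > 0$. Once $B > 0$ is secured, Lemma \ref{SplitPrimeLemmaNoRH} gives $h < 2c\sqrt{\pi}/(13 B)$, and the numerics would be pushed until this quantity falls below $27$.

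Finally I would apply the Rank Theorem. If some prime $q \neq 13$ divided $h$, then $q^{f} \mid h$ with $f$ the order of $q$ modulo $13$; inspecting the residues shows that the least value of $q^f$ over all admissible $q$ is $3^3 = 27$ (attained by $q=3$, of order $3$ modulo $13$), while Iwasawa's Theorem \ref{Iwasawa} excludes $q = 13$ itself. Hence every nontrivial class number is at least $27$, so the analytic bound $h < 27$ forces $h = 1$. The main obstacle is computational: the norm form has degree $13$ in $13$ variables, so elements of small norm are rare, and the difficulty lies in locating enough principal split primes of small norm to drive the analytic bound below the $27$ threshold.
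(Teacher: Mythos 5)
Your proposal follows essentially the same route as the paper: search small integer combinations of the given integral basis for elements of prime norm, feed the resulting totally split principal primes into Lemma \ref{SplitPrimeLemmaNoRH}, and finish with the Rank Theorem. The paper's actual computation (elements of norm $19$ and $23$, $S = \{19, 23\}$, $c = 10$) yields $h \leq 7$, comfortably below your correctly identified threshold of $27$; your invocation of Iwasawa's theorem to exclude $q = 13$ is needed only because your target bound $27$ exceeds $13$, whereas the paper's bound of $7$ makes that step automatic.
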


\begin{proof}
The root discriminant of $\mathbb{B}_{13,1}$ is approximately 113.90, which is too large to use Odlyzko's unconditional discriminant bounds.  Alternatively, using the Minkowski bound of 479,001,600 would require testing whether roughly 20 million prime ideals were principal.  However, using our new method, it suffices for us to find generators for \emph{only two} principal prime ideals in order to employ Lemma \ref{SplitPrimeLemmaNoRH} and establish an unconditional class number upper bound.

The Galois group of the cyclotomic field of conductor 169 is generated by $\sigma = (\zeta \mapsto \zeta^2)$, where $\zeta = \exp(2 \pi i/169)$.  Using the integral basis $\{b_0, b_1, \dots, b_{12}\}$ given above, we search over sparse vectors and find that the element
$$b_0 - b_1 - b_2 - b_3 - b_6 - b_7 - b_8$$
has norm 19, where the \emph{norm} is taken to be the absolute value of the product of the Galois conjugates of the element.  Similarly, the element
$$b_0 - b_1 - b_2 -b_4 + b_{12}$$
has norm 23.  Therefore, the primes 19 and 23 totally split into principal ideals.

Now by Lemma \ref{SplitPrimeLemmaNoRH}, using $S = \{19, 23\}$ and $c = 10$, we get an upper bound for the class number,
$$h \leq 7.$$
Finally, by the Rank Theorem, we conclude that the class number is 1.
\end{proof}

\begin{proposition}
The class number of $\mathbb{B}_{17,1}$ is $1$.
\end{proposition}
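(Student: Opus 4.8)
The plan is to mirror the proof just given for $\mathbb{B}_{13,1}$, adapting it to the larger field $\mathbb{B}_{17,1}$. The strategy has two ingredients: first, use Lemma \ref{SplitPrimeLemmaNoRH} to obtain an explicit unconditional upper bound on the class number $h$ of $\mathbb{B}_{17,1}$; second, invoke the Rank Theorem to eliminate all candidate prime divisors below that bound, forcing $h=1$.

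First I would fix an explicit generator $\sigma$ of the Galois group of the cyclotomic field of conductor $17^2=289$, say $\sigma = (\zeta \mapsto \zeta^g)$ for a primitive root $g$ modulo $289$ with $\zeta = \exp(2\pi i/289)$, and write down the integral basis $\{b_0, b_1, \dots, b_{16}\}$ defined in the excerpt. The computational heart of the argument is a search over sparse integer combinations $\sum_j c_j b_j$ to locate several algebraic integers whose norms (absolute values of the products of Galois conjugates) are small totally split rational primes $q \neq 17$. Each such element exhibits a principal prime ideal of residue degree one lying over $q$, so $q$ totally splits into principal ideals and may be placed in the set $S$. Because the root discriminant of $\mathbb{B}_{17,1}$ exceeds that of $\mathbb{B}_{13,1}$, the term $-\log\operatorname{rd}(K)$ in $B$ is more negative, so I expect to need a larger set $S$ (more split principal primes, or primes of smaller size) to make $B>0$; I would tune the free parameter $c$ and enlarge $S$ until Lemma \ref{SplitPrimeLemmaNoRH} yields a usable numerical bound $h \le h_0$.

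Once the upper bound $h \le h_0$ is in hand, I would rule out every prime $q \le h_0$. Iwasawa's theorem (Theorem \ref{Iwasawa}) already removes $q=17$, and the parity result of Ichimura and Nakajima (Theorem \ref{ParityClassNumber}) removes $q=2$. For each remaining prime $q$, the Rank Theorem says that if $q \mid h$ then in fact $q^f \mid h$, where $f$ is the multiplicative order of $q$ modulo $17$. Since the order $f$ is frequently large (a divisor of $16$), the quantity $q^f$ will typically exceed $h_0$, contradicting the bound; this eliminates those $q$ outright. I would check the finitely many small primes individually against $h_0$ and confirm that no $q$ survives, concluding $h=1$.

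The main obstacle I anticipate is the search step: finding enough sparse combinations $\sum_j c_j b_j$ with small prime norms to push $B$ above zero. The norm of a general element of a degree-$17$ field is a degree-$17$ form in the coefficients, so it grows rapidly, and small-norm elements are sparse; restricting to sparse support vectors with small coefficients keeps the search feasible but offers no guarantee of success. If two or three split principal primes do not suffice to make $B>0$ for $\mathbb{B}_{17,1}$, I would either widen the coefficient range and support size of the search, or seek split principal primes of smaller norm, accepting the heavier computation. Everything after a successful search is routine: the evaluation of $B$ is an explicit (numerically integrated) calculation, and the final elimination of prime divisors via the Rank Theorem is a short finite check.
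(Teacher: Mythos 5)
Your overall skeleton --- Lemma \ref{SplitPrimeLemmaNoRH} applied to a set $S$ of totally split principal primes found by searching sparse combinations of the integral basis, followed by the Rank Theorem (with Iwasawa's theorem disposing of $q=17$) --- is exactly the paper's strategy, and your endgame is sound: the paper obtains $h \le 5$ with $c = 22$, and the primes $2$, $3$, $5$ are eliminated by the Rank Theorem because each has large multiplicative order modulo $17$.

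However, there is a genuine gap in the computational heart of your plan, and it is precisely the point the paper flags. Since $\operatorname{rd}(\mathbb{B}_{17,1}) = 17^{2(1-1/17)} \approx 207$, one does not need ``two or three'' more split principal primes than were used for $\mathbb{B}_{13,1}$: the paper needs \emph{many thousands} of them (with norms permitted up to $10^{12}$), and it states explicitly that for a field of this size it is difficult to find sufficiently many elements of \emph{prime} norm by direct search. Your fallback --- widen the coefficient range and support size, or hunt for smaller prime norms --- is exactly the naive approach that stalls. The missing idea is the paper's quotient construction: one also records elements $x$ whose norm is a \emph{composite} $pq$ with $q$ already realized as a norm, say $N(y) = q$; then $(x) = \mathfrak{p}\mathfrak{q}$ with $\mathfrak{q}$ Galois-conjugate to $(y)$, so $x/y^\sigma$ is an algebraic integer of norm $p$ for a suitable automorphism $\sigma$, exhibiting a new principal split prime $p$. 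This second set $S_2$ of quotient primes, adjoined to the directly found prime norms $S_1$, is what pushes the prime-ideal contribution in $B$ above the threshold (the paper gets a contribution exceeding $2.092893$ using only the $m = 1, 2$ terms of the inner sum) and makes $B > 0$ attainable. Without this device, your plan has no realistic route to assembling a large enough $S$, so as written the argument would not go through in practice.
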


\begin{proof}
The fields $\mathbb{B}_{p,1}$ have root discriminant $p^{2(1-1/p)}$, so already $\mathbb{B}_{17,1}$ will require finding many thousands of principal prime ideals, rather the just two as was the case for $\mathbb{B}_{13,1}$.  To find these primes, we will search over a large number of vectors, using an integral basis as given above.  In particular, we consider the vectors
\[x = a_1 b_{j_1} + a_2 b_{j_2} + \cdots + a_{12} b_{j_{12}},\]
where $0 \leq j_1 < j_2 < j_3 < \cdots < j_{12} < 17$ and $a_j \in \{-1, 0, 1\}$ for $1 \leq j \leq 12$,

Let $T$ denote the set of all such elements $x$, and let $U$ be the set of their norms, up to $10^{12}$.
\[U = \{N(x) | x \in T, N(x) < 10^{12} \}.\]
Let $S_1$ be the set of prime norms 
\[S_1 = \{p : p \in U, p \text{ prime}\}.\]

For a field of such large root discriminant, it is difficult to find sufficiently many elements of prime norm.  An effective approach is to search for elements that have large composite norms, and then take quotients by appropriately chosen algebraic integers.  Following this strategy, we define $S_2$ to be the set of primes defined by
\[S_2 = \{p : pq \in U, p \text{ prime, } p \notin S_1, q \in S_1  \},\]
noting that if $N(x) = pq$ and $N(y)=q$, for $x, y$ in the ring of integers $\mathcal{O}$, then $x/y^\sigma \in \mathcal{O}$ with norm $p$ for some Galois automorphism $\sigma$.

Now put $S = S_1 \cup S_2 \setminus \{17\}$ and $c = 22$.   The contribution of prime ideals has a lower bound
\[2 \sum_{p \in S} \sum_{m=1}^\infty \frac{\log p}{p^{m/2}}F(m \log p) > 2 \sum_{p \in S} \sum_{m=1}^2 \frac{\log p}{p^{m/2}}F(m \log p) > 2.092893.\]
Applying Lemma \ref{SplitPrimeLemmaNoRH}, we find that the class number is bounded above by
$$h \leq 5.$$
Therefore, by the Rank Theorem, we see that the class number is 1.
\end{proof}

\begin{proposition}
The class number of $\mathbb{B}_{19,1}$ is $1$.
\end{proposition}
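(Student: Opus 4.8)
The plan is to mirror the argument just given for $\mathbb{B}_{17,1}$, since $\mathbb{B}_{19,1}$ has root discriminant $19^{2(1-1/19)} \approx 265$, which is even larger than that of $\mathbb{B}_{17,1}$ and hence well beyond the reach of Odlyzko's discriminant bounds or van der Linden's methods. First I would fix a generator $\sigma$ of the Galois group of the cyclotomic field of conductor $361 = 19^2$ and assemble the integral basis $\{b_0, b_1, \dots, b_{18}\}$ described above. Searching over the sparse vectors $x = a_1 b_{j_1} + \cdots + a_{12} b_{j_{12}}$ with $0 \le j_1 < \cdots < j_{12} < 19$ and coefficients $a_k \in \{-1,0,1\}$, I would record the set $U$ of norms below a cutoff such as $10^{12}$ and extract the set $S_1$ of prime norms. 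Because the root discriminant is so large, $S_1$ will not suffice on its own, so I would also form the auxiliary set $S_2$ of primes $p$ arising from composite norms $pq \in U$ with $q \in S_1$, using the fact that if $N(x) = pq$ and $N(y) = q$ then $x / y^\sigma$ is an algebraic integer of norm $p$ for a suitable $\sigma$. Each prime so produced splits completely into principal prime ideals.

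With $S = S_1 \cup S_2 \setminus \{19\}$ and a constant $c$ comparable to the value $c = 22$ used for $\mathbb{B}_{17,1}$, I would compute the prime-ideal contribution $2 \sum_{p \in S} \sum_{m=1}^\infty \frac{\log p}{p^{m/2}} F(m \log p)$, bounding it below by truncating the inner sum at $m = 2$. The objective is to drive this sum high enough that the quantity $B$ in Lemma \ref{SplitPrimeLemmaNoRH} is positive and produces an upper bound $h \le N$ with $N$ small, on the same single-digit scale as the bounds $h \le 7$ and $h \le 5$ obtained for $\mathbb{B}_{13,1}$ and $\mathbb{B}_{17,1}$.

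To collapse this bound to $h = 1$ I would invoke the Rank Theorem. Since $\mathbb{B}_{19,1}$ has prime degree $19$ over $\mathbb{Q}$, any prime $q \neq 19$ dividing $h$ forces $q^f \mid h$, where $f$ is the order of $q$ modulo $19$. The small primes all have large order: $2$, $3$ and $13$ have order $18$, while $5$ and $17$ have order $9$, so each of these satisfies $q^f \gg N$ and is excluded. The first primes of small order are $q = 7$ and $q = 11$, both of order $3$, so the earliest genuine obstruction occurs only at $7^3 = 343$. Consequently a single-digit bound $N$ eliminates every prime $q \neq 19$, and Iwasawa's theorem (Theorem \ref{Iwasawa}) eliminates $q = 19$, forcing $h = 1$.

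The main obstacle will be the computational search in the first step. Because $\operatorname{rd}(\mathbb{B}_{19,1})$ exceeds $\operatorname{rd}(\mathbb{B}_{17,1})$, the negative term $-\log \operatorname{rd}(K)$ in $B$ is larger, so the prime-ideal sum must be pushed correspondingly higher to keep $B > 0$; this demands substantially more totally split principal primes than the several thousand needed for $\mathbb{B}_{17,1}$. The quotient construction yielding $S_2$ is exactly what makes collecting enough such primes feasible, but verifying principality and assembling a large enough set $S$ within the norm cutoff is the delicate and computationally heavy part of the argument.
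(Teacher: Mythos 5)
Your proposal follows essentially the same route as the paper: generate the set $S$ exactly as in the $\mathbb{B}_{17,1}$ argument, apply Lemma \ref{SplitPrimeLemmaNoRH} to obtain a class number upper bound, and finish with the Rank Theorem together with Iwasawa's theorem to handle the prime $19$. The only difference is quantitative: the paper (taking $c=40$) obtains only $h \leq 38$ rather than a single-digit bound, but this is harmless since, as your own order computations essentially show, the smallest possible obstruction $q^f$ with $q \neq 19$ is much larger (in fact $191$, coming from primes $q \equiv 1 \pmod{19}$ with $f=1$, which is even smaller than your stated $7^3 = 343$).
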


\begin{proof}
We follow an entirely similar proof to that of $\mathbb{B}_{17,1}$ to generate our set $S$.  Using $c=40$, we calculate a lower bound for the contribution of prime ideals of 1.671648.  We apply Lemma \ref{SplitPrimeLemmaNoRH} and find that the class number is bounded above by
$$h \leq 38.$$
By Theorem \ref{Iwasawa}, the class number is not divisible by 19.  For the other primes $p \neq 19$, we can apply the Rank Theorem to conclude that the class number is 1.
\end{proof}

\section{Cohen-Lenstra heuristics for totally real cyclic number fields}
If a number field is a Galois extension of the rationals, then its Galois group $G$ acts on its ideals, so its ideal class group is a $\mathbb{Z}[G]$-module.  Moreover, if
$$N = \sum_{g \in G} g,$$
then $N$ acts as the norm on ideals, sending every ideal to a principal ideal, so that the class group is in fact a $\mathbb{Z}[G]/\langle N \rangle$-module.  Roughly speaking, Cohen and Lenstra \cite{CohenLenstra1, CohenLenstra2} predicted that the class group of a totally real Galois number field $K$ of Galois group $G$ should behave as a ``random" finite $\mathbb{Z}[G]/\langle N \rangle$-module modulo a random cyclic submodule.  One can then ask what is the expected probability that a simple factor $M$ occurs in the Jordan-H\"{o}lder decomposition of this quotient.  When $G$ is abelian, Cohen and Lenstra calculated that a random $\mathbb{Z}[G]/\langle N \rangle$-module modulo a random cyclic submodule should have such a simple factor $M$ with probability:
$$1 - \prod_{k \geq 2} (1 - |M|^{-k}).$$

Here we are taking some license with our notion of ``probability."  In Cohen and Lenstra's formulation of their heuristics, probability has a precise meaning in terms of the density of number fields of a given Galois group and signature, as the discriminant varies.  Our notion of probability will be in the Bayesian sense of a subjective prior probability, as described by Buhler, Pomerance and Robertson \cite[p. 3]{Buhler}:
\begin{quote}
We would like to apply this to class groups $Cl^+(\ell^n)$, but this is hard to formalize in the usual frequentist language of probability since there is no underlying probability space.  Indeed, the original Cohen-Lenstra heuristics apply to a large collection of fields of a given degree, and we are applying them to a large collection of fields whose degrees are unbounded.  Instead we adopt a subjective Bayesian view, where probability arises from ignorance.  Thus we use the Cohen-Lenstra heuristics as the basis of the assignment of subjective probabilities, on the grounds that they are a plausible first guess.
\end{quote}
Schoof also used such a notion of probability to analyze his results on the class numbers of real cyclotomic fields of prime conductor \cite[p. 19]{Schoof}.  Going forward, when we speak of the ``probability" of a number field (or family of number fields) having class number 1, we mean probability in this Bayesian sense.

With this stipulation on the meaning of ``probability," given a prime $q$ not dividing the degree of $K$, the Cohen-Lenstra heuristics predict the probability that $q$ does not divide the class number $h(K)$ of $K$ is
$$\operatorname{Prob}(q\text{-part of } h(K) \text{ is } 1) = \prod_{\substack{M \text{ simple} \\ |M|\text{ power of }q}}  \prod_{k \geq 2} (1 - |M|^{-k}),$$
where the outer product runs over all simple $\mathbb{Z}[G]/\langle N \rangle$-modules $M$ of order a power of $q$.

Now suppose $K$ is a totally real degree $n$ cyclic extension of the rationals, and suppose $q$ is a prime not dividing $n$.  Then the simple $\mathbb{Z}[G]$-modules $M$ of size power of $q$ are given by quotients of $\mathbb{Z}[G] \cong \mathbb{Z}[X]/(X^n-1)$ by the maximal ideals generated by $q$ and $f(X)$, where $f(X)$ is an irreducible divisor of the cyclotomic polynomial $\Phi_d(X)$ modulo $q$, for some $d$ dividing $n$.  To get the maximal ideals of  $\mathbb{Z}[G]/\langle N \rangle$, we further require that $d > 1$.  In other words, the simple $\mathbb{Z}[G]/\langle N \rangle$-modules $M$ of size power of $q$ are given by 
$$M \cong \mathbb{Z}[\zeta_d]/P$$
for some divisor $d$ of $n$, with $d>1$, and for some prime ideal $P$ lying over $q$.

\begin{remark}
Although we have the isomorphism,
$$\mathbb{Z}[G]/\langle N \rangle \cong \frac{\mathbb{Z}[X]}{(1+X+\cdots+X^{n-1})} = \frac{\mathbb{Z}[X]}{\left(\prod_{d|n,\, d>1}\Phi_d(X)\right)},$$
the homorphism
$$\frac{\mathbb{Z}[X]}{\left(\prod_{d|n,\, d>1}\Phi_d(X)\right)} \longrightarrow \bigoplus_{d|n,\, d>1} \frac{\mathbb{Z}[X]}{\left(\Phi_d(X)\right)} \cong \bigoplus_{d|n, \, d>1} \mathbb{Z}[\zeta_d],$$
is injective, but is not in general surjective.  Nevertheless, the maximal ideals relatively prime to $n$ are in bijective norm-preserving correspondence.
\end{remark}

Therefore, given a prime $q$ not dividing the degree $n$, we have the heuristic probability
$$\operatorname{Prob}(q\text{-part of } h(K) \text{ is } 1) = \prod_{d|n, \, d>1} \prod_{\substack{P \subset \mathbb{Z}[\zeta_d] \\ P|q}}  \prod_{k \geq 2} (1 - NP^{-k}),$$
where the $P$ are prime ideals of $\mathbb{Z}[\zeta_d]$ that lie over $q$, and $NP$ is the norm of $P$.  If the primes $P$ over $q$ have degree $f$, or equivalently if the order of $q$ modulo $d$ is $f$, then there are $\phi(d)/f$ primes $P$ lying over $q$, where $\phi$ is the Euler totient function.  So we can write
$$\operatorname{Prob}(q\text{-part of } h(K) \text{ is } 1) = \prod_{d|n, \, d>1} \prod_{k \geq 2} (1 - q^{-fk})^{\phi(d)/f}.$$

In particular, for a totally real cyclic field $K$ of prime power degree $p^n$, the Cohen-Lenstra heuristics predict that
$$\operatorname{Prob}(\text{non-}p\text{-part of } h(K) \text{ is } 1) = \prod_{\substack{q \text{ prime} \\ q \neq p}} \prod_{j=1}^n \prod_{k \geq 2} (1 - q^{-fk})^{\phi(p^j)/f},$$
where $f$ is the order of $q$ modulo $p^j$.

\section{Cohen-Lenstra heuristics for cyclotomic $\mathbb{Z}_p$-extensions}
Since $\mathbb{B}_{p, n}$, the $n$th layer of the cyclotomic $\mathbb{Z}_p$-extension, is cyclic with prime power degree $p^n$, the heuristics for the non-$p$-part of the class group are described by the previous section.  Moreover, Iwasawa \cite{Iwasawa} proved Theorem \ref{Iwasawa}, i.e. that $p$ does not divide the class number of $\mathbb{B}_{p, n}$ for any prime $p$ and any positive integer $n$.  Therefore, we may predict that the probability that $\mathbb{B}_{p, n}$ has class number 1 is
$$\operatorname{Prob}(h(\mathbb{B}_{p, n})=1) = \prod_{\substack{q \text{ prime} \\ q \neq p}} \prod_{j=1}^n \prod_{k \geq 2} (1 - q^{-fk})^{\phi(p^j)/f}.$$

We may conclude that a naive estimate of the probability $P_T$ of our Main Conjecture being true is
$$P_T = \prod_{p \text{ prime}} \prod_{\substack{q \text{ prime} \\ q \neq p}} \prod_{j=1}^\infty \prod_{k \geq 2} (1 - q^{-fk})^{\phi(p^j)/f}.$$
It is convenient to linearize this product by taking logarithms:
$$-\log{P_T} = -\sum_{p \text{ prime}} \sum_{\substack{q \text{ prime} \\ q \neq p}} \sum_{j=1}^\infty \sum_{k \geq 2} \frac{\phi(p^j)}{f}\log(1 - q^{-fk}).$$

Buhler, Pomerance and Robertson \cite{Buhler} proved that this quadruple sum is finite.  Calculating an explicit upper bound for such a sum will require certain results from sieve theory, in particular theorems of Siebert \cite{Siebert} and Riesel and Vaughan \cite{RieselVaughan}.  We will further refine our estimates by incorporating existing knowledge of class numbers of fields in $\mathbb{Z}_p$-extensions, including the new results of Theorem \ref{NewClassNumbers}; for this it is useful to know that for $m < n$ the conditional probability that $h(\mathbb{B}_{p, n})=1$, given that $h(\mathbb{B}_{p, m})=1$, is
$$\prod_{\substack{q \text{ prime} \\ q \neq p}} \prod_{j=m+1}^n \prod_{k \geq 2} (1 - q^{-fk})^{\phi(p^j)/f}.$$

As a warm-up, we first estimate some important special cases of our Main Conjecture.

\section{The $\mathbb{Z}_2$-extension}
The layers of the $\mathbb{Z}_2$-extensions are precisely the real cyclotomic fields with power of 2 conductor.  We have the following special case of our Main Conjecture, known as \emph{Weber's class number problem}:

\begin{conjecture}[Weber's class number problem]
For any positive integer $n$, the class number of $\mathbb{B}_{2, n} = \mathbb{Q}(\zeta_{2^{n+2}}+\zeta_{2^{n+2}}^{-1})$ is 1.
\end{conjecture}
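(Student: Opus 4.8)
Weber's class number problem is a famous open problem, so in place of a complete argument I outline the strongest available line of attack, show how it disposes of each fixed layer, and isolate the one step that remains out of reach. The plan is to argue by induction on $n$, using the Galois-module structure of the class group to convert the statement ``$q$ divides $h(\mathbb{B}_{2,n})$'' into ``$q$ divides $h(\mathbb{B}_{2,n})$ with enormous multiplicity.'' By Iwasawa's Theorem (Theorem \ref{Iwasawa}) the $2$-part of $h(\mathbb{B}_{2,n})$ is already trivial, so it suffices to exclude each odd prime $q$. Fix $n$ and assume, as inductive hypothesis, that $h(\mathbb{B}_{2,m})=1$ for all $m<n$; the base cases through $n=6$ are known unconditionally (Table \ref{ClassNumbers}).

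Since $q$ is prime to the degree $2^n$, the $q$-part of the class group decomposes along the idempotents of $\mathbb{Z}_q[\operatorname{Gal}(\mathbb{B}_{2,n}/\mathbb{Q})]$, that is, along the factors $\mathbb{Z}[\zeta_d]$ for $d\mid 2^n$, $d>1$, exactly as in the module description of Section 3. For $q\neq 2$ the norm map identifies the components with $d\mid 2^{n-1}$ with the corresponding components of $\operatorname{Cl}(\mathbb{B}_{2,n-1})$, which the inductive hypothesis renders trivial; hence the $q$-part of $h(\mathbb{B}_{2,n})$ is supported on the top factor $d=2^n$. The simple constituents there have size $q^{f}$ with $f=\operatorname{ord}_{2^n}(q)$, so $q\mid h(\mathbb{B}_{2,n})$ forces $q^{\,\operatorname{ord}_{2^n}(q)}\mid h(\mathbb{B}_{2,n})$. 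For the small primes that generate $(\mathbb{Z}/2^n\mathbb{Z})^\times$ modulo $\pm1$---notably $q=3$ and $q=5$, whose order is $2^{n-2}$---this reads $3^{\,2^{n-2}}\mid h(\mathbb{B}_{2,n})$, an astronomically large divisibility; a prime of smaller order is forced to be correspondingly large, as it must be congruent to $\pm1$ modulo a high power of $2$.

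It therefore suffices to produce, for each $n$, an upper bound for $h(\mathbb{B}_{2,n})$ smaller than $q^{\,\operatorname{ord}_{2^n}(q)}$ for every admissible $q$. For the base layers I would proceed as in Section 2, combining the analytic upper bound of Lemma \ref{SplitPrimeLemmaNoRH} with the divisibility just described. The obstruction to extending this to all $n$ is quantitative: by the conductor--discriminant formula one computes $v_2(d_{\mathbb{B}_{2,n}})=(n+1)2^n-1$, whence $\operatorname{rd}(\mathbb{B}_{2,n})=2^{\,n+1-2^{-n}}$ grows without bound, and the crude size estimate $h(\mathbb{B}_{2,n})\lesssim|d_{\mathbb{B}_{2,n}}|^{1/2}=2^{((n+1)2^n-1)/2}$ already \emph{exceeds} $3^{\,2^{n-2}}$, so the size bound alone never defeats the Rank-type multiplicity. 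To close the gap one needs a genuine lower bound for the regulator, that is, one must show that the cyclotomic units fill out the unit group. Concretely, the analytic class number formula identifies the odd part of the index $[E:C]$ of the cyclotomic units $C$ in the full unit group $E$ with $h(\mathbb{B}_{2,n})$, the latter being odd by Iwasawa's Theorem; one would then bound this index by studying $C$ as a Galois module and ruling out $q\mid[E:C]$ through the congruence criterion of Fukuda and Komatsu \cite{Fukuda}, whereby $q$ divides the index only if a specified cyclotomic unit is a $q$-th power modulo a suitable prime of $\mathbb{B}_{2,n}$, a condition one tests explicitly. This is precisely the mechanism that eliminates all $q<10^9$ uniformly in $n$.

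The hard part---indeed the heart of Weber's problem---is to run this elimination \emph{uniformly over all odd primes $q$ at once}, rather than over a finite range. The multiplicity constraint confines the danger to large primes dividing with small exponent in low layers, and Washington's theorem \cite{Washington_non_p} shows that for each fixed $q$ the exponent $v_q\!\left(h(\mathbb{B}_{2,n})\right)$ stabilizes in $n$; but neither input yields a bound on the cyclotomic-unit index independent of $q$. Supplying such a uniform bound---whether an unconditional estimate for $[E:C]$, or a reason extracted from the special values attached to the even characters of $\mathbb{B}_{2,n}$ that their odd parts cannot share a common prime factor---is the essential missing ingredient. I regard this uniform control as the principal obstacle, and it is exactly why the present paper offers heuristic evidence in place of a proof.
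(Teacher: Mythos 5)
The statement you set out to prove is labeled a conjecture in the paper, and rightly so: it is Weber's class number problem, which is open. The paper contains no proof of it, and your proposal correctly makes no claim to one, so there is no gap to flag in the usual sense; what must be compared is your proof \emph{strategy} against the paper's actual treatment, which is purely heuristic. The paper combines Fukuda and Komatsu's theorem (no prime $q<10^9$ divides any $h(\mathbb{B}_{2,n})$) with the known triviality of the class groups through the sixth layer (seventh under GRH), and then bounds the Cohen--Lenstra failure probability by the triple sum $-\sum_{q>10^9}\sum_{j\geq 8}\sum_{k\geq 2}\frac{\phi(2^j)}{f}\log(1-q^{-fk})<1.3\times 10^{-8}$, using the substitution $q^f=1+2^jt$; it concludes the conjecture holds with probability at least $99.999998\%$, and offers nothing beyond that.

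Your outline is mathematically sound where it makes checkable claims. The amplification step is correct: for odd $q$ the extension and norm maps between $\mathbb{B}_{2,n-1}$ and $\mathbb{B}_{2,n}$ compose to squaring, which is invertible on the $q$-part, so under the inductive hypothesis the $q$-part of the class group is supported on the faithful component $d=2^n$, whence $q\mid h$ forces $q^{f}\mid h$ with $f=\operatorname{ord}_{2^n}(q)$ --- a genuine strengthening of the Rank Theorem quoted in Section 2, which only gives $f=\operatorname{ord}_p(q)$ for prime degree $p$. Your discriminant computation $v_2(d)=(n+1)2^n-1$, hence $\operatorname{rd}(\mathbb{B}_{2,n})=2^{\,n+1-2^{-n}}$, is right, and so is the conclusion that no bound of rough quality $|d|^{1/2}$ (including what Lemma \ref{SplitPrimeLemmaNoRH} yields, since its strength decays as the root discriminant grows) can defeat $3^{2^{n-2}}$ for large $n$; this is exactly why the method of Section 2 cannot be iterated up the tower. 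One small correction: for prime-power conductor the cyclotomic unit index satisfies $[E:C]=h^+$ exactly, so your hedge about ``the odd part of the index'' is unnecessary. Finally, it is worth observing that the amplification you derive is precisely what powers the paper's heuristic: the exponent $f$ in $(1-q^{-fk})^{\phi(2^j)/f}$, together with the constraint $q^f\equiv 1\pmod{2^j}$, is what makes the failure probability summably tiny. So the two documents are complementary rather than divergent --- you explain structurally why a proof is out of reach (no bound on $[E:C]$ uniform in $q$), while the paper quantifies how improbable a counterexample would be; your identification of the uniform elimination over all odd $q$ as the essential obstacle agrees with the paper's implicit position in offering heuristics in place of proof.
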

Recently, the author \cite{MillerWeber} unconditionally proved that the class number of $\mathbb{B}_{2, 6}$ is 1 and also (under GRH) that the class number of $\mathbb{B}_{2, 7}$ is 1.  Furthermore, Fukuda and Komatsu \cite{Fukuda} have proven, for all primes $q < 10^9$, that the $q$-part of the class number of $\mathbb{B}_{2, n}$ is trivial, for all $n$.  Thus we may estimate the probability $P_T$ that the Weber conjecture is true by:
$$P_T = \prod_{\substack{q \text{ prime} \\ q > 10^9 }} \prod_{j=8}^\infty \prod_{k \geq 2} (1 - q^{-fk})^{\phi(2^j)/f}.$$
The probability $P_F$ that the Weber conjecture is false is bounded above by
$$P_F < -\log(1-P_F) = - \log{P_T} = - \sum_{\substack{q \text{ prime} \\ q > 10^9}} \sum_{j=8}^\infty \sum_{k \geq 2} \frac{\phi(2^j)}{f}\log(1 - q^{-fk}).$$
To estimate these probabilities, we have the following proposition.

\begin{proposition}
The following triple sum has an upper bound
$$- \sum_{\substack{q \,\mathrm{prime} \\ q > 10^9}} \sum_{j=8}^\infty \sum_{k \geq 2} \frac{\phi(2^j)}{f}\log(1 - q^{-fk}) < 1.3 \times 10^{-8},$$
where $\phi$ is the Euler totient function, and $f$ is the order of $q$ modulo $2^j$.
\end{proposition}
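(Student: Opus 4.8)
The plan is to bound the triple sum by a single dominant term and to discard the geometric tails as negligible. First I would reduce the inner $k$-sum. Since $-\log(1-t) < \frac{t}{1-t}$ for $0 < t < 1$, and here $t = q^{-fk} \leq q^{-2f} \leq 10^{-18f}$ is extraordinarily small, we have
\[
-\sum_{k \geq 2} \log(1 - q^{-fk}) < \sum_{k \geq 2} \frac{q^{-fk}}{1 - q^{-f}} = \frac{q^{-2f}}{(1-q^{-f})(1-q^{-fk})}\Big|_{\text{sum}} < \frac{q^{-2f}}{1 - q^{-f}} \cdot \frac{1}{1 - q^{-f}},
\]
so up to a factor extremely close to $1$ the $k$-sum is essentially $q^{-2f}$. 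Thus the entire triple sum is, to leading order, bounded by $\sum_q \sum_j \frac{\phi(2^j)}{f} q^{-2f}$, and the whole task becomes controlling this cleaner double sum and verifying that the correction factors (from $1 - q^{-f}$ and from $k \geq 3$) change nothing at the claimed precision.

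Next I would handle the $j$-sum. Here $\phi(2^j) = 2^{j-1}$ grows, which looks dangerous, but $f$ is the order of $q$ modulo $2^j$ and grows in tandem: once $q$ is an odd prime, the order of $q$ in $(\Z/2^j\Z)^\times$ doubles (or nearly so) each time $j$ increases past the point where $q$ first has even order, so that $q^{-2f}$ decays faster than $\phi(2^j)$ grows. The key structural input is that for odd $q$ the multiplicative order $f = f(q,j)$ satisfies $f \geq c \cdot 2^j$ for $j$ beyond a small threshold, which forces $q^{-2f}$ to beat the $2^{j-1}$ factor and makes the $j$-sum converge geometrically. For the leading layer $j = 8$ one has $f$ at least some fixed value (the order of $q$ mod $256$ is a divisor of $64$, and for most $q$ it is large), so the $j=8$ term dominates and the sum over $j \geq 9$ contributes a geometrically smaller amount. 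I would bound the tail $j \geq 9$ by a constant multiple of the $j = 8$ term and fold it into the final constant.

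Finally I would carry out the $q$-sum over primes $q > 10^9$. After the previous reductions the dominant contribution is of the shape $\sum_{q > 10^9} \frac{\phi(2^8)}{f} q^{-2f}$ with $f = \operatorname{ord}_{2^8}(q)$; since $f \geq 2$ always and $q^{-2f} \leq q^{-4}$, I would bound $\sum_{q > 10^9} q^{-4}$ crudely by an integral, $\sum_{q > 10^9} q^{-4} < \int_{10^9}^\infty t^{-4}\,dt = \tfrac{1}{3} \cdot 10^{-27}$, and multiply by the constant $\phi(2^8)/f \leq 128$ together with the bounded-over-all-$j$ geometric factor. The resulting bound is on the order of $10^{-24}$, comfortably below the claimed $1.3 \times 10^{-8}$; the slack is enormous because $q > 10^9$ makes every term minuscule. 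The main obstacle is not any single estimate but the bookkeeping: I must verify rigorously that the order $f$ really does grow with $j$ fast enough to dominate $\phi(2^j)$ for \emph{every} odd prime $q$ (not merely typical ones), since a prime $q \equiv 1 \pmod{2^j}$ for large $j$ would have $f = 1$ and could in principle spoil convergence. I would resolve this by noting that for fixed $q$ the condition $q \equiv 1 \pmod{2^j}$ can hold only for $j$ up to $v_2(q-1)$, a finite and $q$-dependent bound, and that for such exceptional $(q,j)$ pairs the term is controlled separately by the sheer size of $q > 10^9$; summing these exceptional contributions against the same integral bound $\int_{10^9}^\infty t^{-4}\,dt$ keeps everything within budget.
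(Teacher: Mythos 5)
Your reduction of the inner $k$-sum is fine and matches the paper's first step, but the heart of your argument collapses at the $q$- and $j$-sums. The claim ``$f \geq 2$ always'' is false: for every $j$ there are infinitely many primes $q \equiv 1 \pmod{2^j}$ (Dirichlet), and all of them have $f = 1$, so their terms are $\tfrac{2^{j-1}}{q^2}$, not $O(q^{-4})$. These $f=1$ terms are not a negligible exception --- they \emph{dominate} the entire sum. Your proposed repair (controlling the exceptional pairs ``against the same integral bound $\int_{10^9}^\infty t^{-4}\,dt$'') fails for exactly this reason: the exceptional terms decay only like $q^{-2}$, and if you bound $\sum_{q \equiv 1 (\mathrm{mod}\, 2^j),\, q>10^9} q^{-2}$ by the sum over \emph{all} primes $q > 10^9$, you get roughly $10^{-9}$, which after multiplication by $\phi(2^j) = 2^{j-1}$ diverges when summed over $j$. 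Organizing instead by fixed $q$ (your $v_2(q-1)$ remark) is no better: summing $2^{j-1}/q^2$ over $8 \leq j \leq v_2(q-1)$ gives up to $(q-1)/q^2 \approx 1/q$, and $\sum_q 1/q$ diverges. The only way to win is to exploit the \emph{spacing} of the progression: for fixed $j$, every admissible $q$ satisfies $q^f = 1 + 2^j t$ with $t \geq 1$ and $t \mapsto q$ injective, so
\begin{equation*}
\sum_{q} \frac{2^{j-1}}{f\,q^{2f}} \;\leq\; \sum_{\substack{t \geq 1 \\ 1+2^j t > 10^9}} \frac{2^{j-1}}{(1+2^j t)^2} \;<\; \sum_{\substack{t \geq 1 \\ 1+2^j t > 10^9}} \frac{1}{2^{j+1} t^2},
\end{equation*}
which kills the $2^{j-1}$ growth uniformly in $f$. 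This substitution (due to Buhler--Pomerance--Robertson, and the paper's key step) is the missing idea; without it the triple sum cannot even be shown to converge by your estimates.

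A consequence you should internalize: the ``enormous slack'' you claim (a bound of order $10^{-24}$) does not exist. When the $f$ small terms are bounded via the progression $q = 1+2^j t$, each layer $8 \leq j \leq 29$ contributes roughly $5 \times 10^{-10}$ and the layers $j \geq 30$ contribute about $10^{-9}$ in total, which is precisely why the true bound obtainable by this method is $\approx 1.25 \times 10^{-8}$ --- the paper's $1.3 \times 10^{-8}$ is essentially sharp for this argument, not loose by sixteen orders of magnitude. Relatedly, your structural claim that $f \geq c \cdot 2^j$ ``beyond a small threshold'' is misleading as stated: the threshold is $q$-dependent (it is governed by $v_2(q^2-1)$, which can be as large as $\log_2 q$), so no uniform constant $c$ and no uniform threshold exist, and any correct proof must confront the primes for which the order stays small for many layers --- which is exactly what the $t$-substitution does.
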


\begin{proof}
Let $S$ denote the triple sum.  Since $q^{-fk} < 10^{-18}$, we have
$$-\log(1 - q^{-fk}) < (1+10^{-17})q^{-fk}.$$
Therefore, the triple sum $S$ is bounded above by
$$S < (1+10^{-17}) \sum_{\substack{q \text{ prime} \\ q > 10^9}} \sum_{j=8}^\infty \frac{2^{j-1}}{fq^{f}(q^{f}-1)} < (1+10^{-8}) \sum_{\substack{q \text{ prime} \\ q > 10^9}} \sum_{j=8}^\infty \frac{2^{j-1}}{fq^{2f}}.$$
Recall that $f$ is the order of $q$ modulo $2^j$.  Following the argument in \cite[p. 6]{Buhler}, we observe that $q^f = 1 + 2^j t$ for some positive integer $t$, so we have the upper bound
$$S < (1+10^{-8}) \sum_{j=8}^\infty \sum_{\substack{t=1 \\ 1 + 2^j t > 10^9}}^\infty \frac{2^{j-1}}{(1 + 2^j t)^2} < (1+10^{-8}) \sum_{j=8}^\infty \sum_{\substack{t=1 \\ 1 + 2^j t > 10^9}}^\infty \frac{1}{2^{j+1}t^2}.$$
We can rearrange this to get
$$S  < \frac{1+10^{-8}}{2^8} \cdot \frac{\pi^2}{6} - \sum_{j=8}^\infty \sum_{\substack{t=1 \\ 1 + 2^j t \leq 10^9}}^\infty \frac{1}{2^{j+1}t^2} < 1.3 \times 10^{-8}.$$
\end{proof}

Thus, assuming the validity of the Cohen-Lenstra heuristics, the probability that the Weber conjecture is true is at least 99.999998\% (versus $> 99.3\%$ estimated in \cite{Buhler}).

\section{The $\mathbb{Z}_3$-extension}
The layers of the $\mathbb{Z}_3$-extensions are precisely the real cyclotomic fields with power of 3 conductor.  We have the following special case of our Main Conjecture.

\begin{conjecture}
For any positive integer $n$, the class number of $\mathbb{B}_{3, n} = \mathbb{Q}(\zeta_{3^{n+1}}+\zeta_{3^{n+1}}^{-1})$ is 1.
\end{conjecture}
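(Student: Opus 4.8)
The plan is to target the class number directly, using the known divisibility constraints to collapse the conjecture to a single effective inequality, and then to attack that inequality with the analytic bound of Lemma~\ref{SplitPrimeLemmaNoRH}. Write $h_n$ for the class number of $\mathbb{B}_{3,n}$. First I would collect the two congruence obstructions already available: Theorem~\ref{Iwasawa} (Iwasawa) shows $3 \nmid h_n$, and the theorem of Fukuda, Komatsu and Morisawa \cite{Fukuda2} shows that no prime $q<10^9$ divides $h_n$. Hence every prime factor of $h_n$ is at least $10^9$, giving for each $n$ the sharp dichotomy that either $h_n=1$ or $h_n \geq 10^9$. The whole conjecture therefore reduces to the single bound $h_n < 10^9$ for all $n$.

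To obtain that bound I would apply Lemma~\ref{SplitPrimeLemmaNoRH} to $K=\mathbb{B}_{3,n}$, a field of degree $3^n$. Writing the lemma's quantity as $B = B_0(c) - \log\operatorname{rd}(\mathbb{B}_{3,n}) + 2\sum_{p\in S}\sum_{m\geq 1}\frac{\log p}{p^{m/2}}F(m\log p)$, where $B_0(c)$ is the field-independent constant assembled from $\tfrac{\pi}{2}+\gamma+\log 8\pi$ and the integral term, the conclusion reads $h_n < 2c\sqrt{\pi}/(3^n B)$. The factor $3^n$ in the denominator works strongly in our favour: provided $B$ can be held above a fixed positive constant, the degree alone drives $h_n$ far below $10^9$ for all large $n$, while the finitely many small layers are handled individually, exactly as the first layers are in Section~2. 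Thus everything hinges on producing, for every $n$, a set $S$ of totally split \emph{principal} primes whose contribution $2\sum_{p\in S}\sum_{m\geq 1}\frac{\log p}{p^{m/2}}F(m\log p)$ exceeds $\log\operatorname{rd}(\mathbb{B}_{3,n})-B_0(c)$.

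This last step is where I expect the real difficulty to lie. The root discriminant grows without bound, $\log\operatorname{rd}(\mathbb{B}_{3,n})\sim n\log 3$, so the required split-prime contribution must grow linearly in $n$. But totally split primes in a field of degree $3^n$ have density $3^{-n}$, so the usable primes first appear only past bounds that are exponential in $n$; worse, each must be certified principal by exhibiting an explicit generator through a norm-equation search whose cost itself grows with the degree. Producing a provably sufficient supply of split principal primes \emph{uniformly in $n$} is exactly the obstacle that keeps the conjecture open: the method of Section~2 succeeds field by field but gives no construction valid for the entire tower.

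Finally I would note why the natural structural alternative also stalls, so as to make clear the gap is genuine. Passing to the non-$3$-part of the class groups and invoking Washington's boundedness theorem \cite{Washington_non_p} shows that for each fixed prime $q\neq 3$ the $q$-adic valuation of $h_n$ is eventually constant; combined with the Rank Theorem this would suffice \emph{if} the stable value and the layer at which stabilization occurs were effectively bounded. Washington's argument, however, is non-effective, so it yields no explicit $N$ beyond which the $q$-part is controlled, and in particular no uniform statement over all $q$ and all $n$. Closing that gap---either an effective form of Washington's theorem or an unconditional uniform supply of split principal primes---is what a genuine proof of the conjecture would require.
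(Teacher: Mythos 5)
You have not proved the statement, and you say so yourself; the key observation for this comparison is that the paper does not prove it either, because it is a conjecture, and the paper's ``support'' for it is of a completely different kind from your attempt. The paper takes the same inputs you invoke --- Iwasawa's Theorem~\ref{Iwasawa}, van der Linden's computations for the small layers \cite{Linden}, and the Fukuda--Komatsu--Morisawa result \cite{Fukuda2} that no prime $q<10^9$ divides any $h(\mathbb{B}_{3,n})$ --- but uses them not to reduce the conjecture to an effective inequality, as you do, but to truncate the Cohen--Lenstra heuristic product: it estimates the Bayesian probability of failure by the triple sum over primes $q>10^9$, layers $j\geq 5$, and $k\geq 2$ of $-\frac{\phi(3^j)}{f}\log(1-q^{-fk})$, and bounds this by $1.1\times 10^{-8}$ via the substitution $q^f=1+3^jt$, concluding only that the conjecture is true ``with probability at least $99.999998\%$.'' So where you attempt a proof and honestly abort, the paper never attempts one; it quantifies confidence. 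Your proposal should not be presented as a proof of the conjecture, since no such proof exists, but as an analysis of why the obvious strategy fails --- which is what it in fact is.

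On the technical substance, your analysis is correct and the gap you name is genuine. The dichotomy $h_n=1$ or $h_n\geq 10^9$ does follow from \cite{Fukuda2}, and your diagnosis of why Lemma~\ref{SplitPrimeLemmaNoRH} cannot close the remaining inequality uniformly in $n$ is accurate: $\log\operatorname{rd}(\mathbb{B}_{3,n})$ grows like $n\log 3$ (more precisely it tends to $(n+\tfrac12)\log 3$), so keeping $B>0$ requires a split-prime contribution growing linearly in $n$, while a rational prime totally split in $\mathbb{B}_{3,n}$ satisfies $p\equiv\pm 1 \pmod{3^{n+1}}$, hence $p\geq 3^{n+1}-1$, and contributes at most roughly $2(\log p)/\sqrt{p}=O(n\,3^{-n/2})$ to $B$; exponentially many primes, each certified principal by an explicit generator, would be needed at every layer, with no construction uniform in $n$. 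Your closing observation that Washington's theorem \cite{Washington_non_p} is ineffective --- it gives no computable layer at which the $q$-part stabilizes, and no uniformity in $q$ --- correctly explains why the structural route stalls as well. In short: your reduction is sound, your obstruction analysis matches the true state of the art, and the only correction needed is one of framing --- what you have written is a well-argued explanation of why the conjecture is open, complementary to, but methodologically disjoint from, the paper's heuristic probability estimate.
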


Van der Linden \cite{Linden} proved that the class number of $\mathbb{B}_{3, 3}$ is 1 and also (under GRH) that the class number of $\mathbb{B}_{3, 4}$ is 1.  This conjecture is further supported by the work of Morisawa, who proved in his thesis \cite{Morisawa} that all primes $p$ less than $400,000$ do not the class number of $\mathbb{B}_{3, n}$ for all $n$.  Morisawa's result has recently been further improved by Fukuda, Komatsu and Morisawa \cite{Fukuda2}, who proved that no prime less than $10^9$ divides the class number of $\mathbb{B}_{3, n}$.

Thus we have we may estimate the probability $P_T$ that this conjecture is true by:
$$P_T = \prod_{\substack{q \text{ prime} \\ q > 10^9 }} \prod_{j=5}^\infty \prod_{k \geq 2} (1 - q^{-fk})^{\phi(3^j)/f}.$$
To estimate this probability, we have the following proposition.

\begin{proposition}
The following triple sum has an upper bound
$$- \sum_{\substack{q \,\mathrm{prime} \\ q > 10^9}} \sum_{j=5}^\infty \sum_{k \geq 2} \frac{\phi(3^j)}{f}\log(1 - q^{-fk}) < 1.1 \times 10^{-8},$$
where $\phi$ is the Euler totient function, and $f$ is the order of $q$ modulo $3^j$.
\end{proposition}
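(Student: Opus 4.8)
The plan is to follow, step for step, the proof just given for the $\mathbb{Z}_2$-extension, changing only the arithmetic tied to the prime. Write $S$ for the triple sum. Since $q > 10^9$ and $fk \geq 2$, we have $q^{-fk} < 10^{-18}$, so the elementary bound $-\log(1-x) < (1+10^{-17})x$ for small positive $x$ gives $-\log(1-q^{-fk}) < (1+10^{-17})q^{-fk}$. Summing the geometric series over $k \geq 2$ produces $\sum_{k \geq 2} q^{-fk} = 1/(q^f(q^f-1))$, and because $q^{-f} \leq q^{-1} < 10^{-9}$ the factor $1/(1-q^{-f})$ can be absorbed together with $(1+10^{-17})$ into a single constant $(1+10^{-8})$. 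Using $\phi(3^j) = 2 \cdot 3^{j-1}$, this already bounds
$$S < (1+10^{-8}) \sum_{\substack{q \text{ prime} \\ q > 10^9}} \sum_{j=5}^\infty \frac{2 \cdot 3^{j-1}}{f\, q^{2f}}.$$

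Next, following \cite[p. 6]{Buhler}, I would use that $f$ is the order of $q$ modulo $3^j$, so $q^f \equiv 1 \pmod{3^j}$ and hence $q^f = 1 + 3^j t$ for some positive integer $t$. By unique factorization each value $q^f$ comes from a single pair $(q,f)$, and every genuine term satisfies $q^f \geq q > 10^9$; since all summands are nonnegative, I may drop the factor $1/f \leq 1$ and enlarge the range to all $t \geq 1$ with $1 + 3^j t > 10^9$, giving
$$S < (1+10^{-8}) \sum_{j=5}^\infty \sum_{\substack{t \geq 1 \\ 1 + 3^j t > 10^9}} \frac{2 \cdot 3^{j-1}}{(1 + 3^j t)^2}.$$
The crude inequality $(1 + 3^j t)^2 > 3^{2j} t^2$ collapses the summand to $2/(3^{j+1} t^2)$, and since $\sum_{j \geq 5} 2 \cdot 3^{-(j+1)} = 3^{-5}$ and $\sum_{t \geq 1} t^{-2} = \pi^2/6$, splitting off the constraint yields
$$S < \frac{1+10^{-8}}{3^5} \cdot \frac{\pi^2}{6} - \sum_{j=5}^\infty \sum_{\substack{t \geq 1 \\ 1 + 3^j t \leq 10^9}} \frac{2}{3^{j+1} t^2}.$$

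It then remains to check numerically that the right-hand side is below $1.1 \times 10^{-8}$, and this is the only step I expect to be delicate. The unconstrained main term $\pi^2/(6 \cdot 243) \approx 6.8 \times 10^{-3}$ exceeds the target by some five orders of magnitude, so the two terms must nearly cancel; the finite correction sum must therefore be evaluated to high relative accuracy with rigorous control of rounding. That correction is substantial: for $j = 5$ the index $t$ runs up to about $(10^9-1)/3^5 \approx 4 \times 10^6$, and the constraint $1 + 3^j t \leq 10^9$ only becomes vacuous once $3^j > 10^9$, i.e.\ for $j \geq 19$. A cleaner route that avoids the cancellation is to bound the complementary tail $\sum_{j \geq 5} \sum_{1 + 3^j t > 10^9} 2/(3^{j+1} t^2)$ directly, estimating $\sum_{t > T} t^{-2} < 1/T$ for the finitely many $j$ with $3^j \leq 10^9$ and using $\sum_{t \geq 1} t^{-2} = \pi^2/6$ for the geometrically small contribution of $j \geq 19$; this turns the whole estimate into a short, self-contained finite computation.
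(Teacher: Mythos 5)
Your main derivation is exactly the paper's proof: the same linearization $-\log(1-q^{-fk}) < (1+10^{-17})q^{-fk}$, the same geometric-series summation with the constants absorbed into $(1+10^{-8})$, the same substitution $q^f = 1+3^j t$ following Buhler--Pomerance--Robertson, and the same rearrangement into the main term $\frac{1+10^{-8}}{3^5}\cdot\frac{\pi^2}{6}$ minus the constrained correction sum. The paper finishes by simply asserting the numerical value of that difference; your first finishing route (evaluating the near-cancelling difference with controlled rounding) is what the paper implicitly does, and it works --- the quantity is approximately $1.08\times 10^{-8}$, so the proposition's margin is only about $2\%$.

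That small margin is precisely why your proposed ``cleaner route'' fails as literally stated. Writing $T_j=\lfloor (10^9-1)/3^j\rfloor$, the bound $\sum_{t>T_j}t^{-2}<1/T_j$ gives a contribution $2/(3^{j+1}T_j)$ for each $5\le j\le 18$; for most $j$ this is about $2/(3\cdot 10^9)\approx 6.67\times 10^{-10}$, but for the last few $j$ the floor makes $T_j$ tiny and the bound $1/T_j$ lossy: $T_{17}=7$ and $T_{18}=2$ yield $7.37\times 10^{-10}$ and $8.60\times 10^{-10}$ respectively. The fourteen terms $j=5,\dots,18$ then sum to about $9.61\times 10^{-9}$, and the unconstrained terms $j\ge 19$ contribute $\frac{\pi^2}{6}\cdot 3^{-19}\approx 1.42\times 10^{-9}$, for a total near $1.103\times 10^{-8}$ --- just \emph{above} the target $1.1\times 10^{-8}$. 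The repair is cheap: either use the convexity bound $\sum_{t>T}t^{-2}<1/\left(T+\tfrac12\right)$, or evaluate the worst two tails exactly ($\sum_{t\ge 3}t^{-2}\approx 0.3949$ for $j=18$, $\sum_{t\ge 8}t^{-2}\approx 0.1331$ for $j=17$); either change brings the total down to about $1.08\times 10^{-8}$ and closes the proof. So your alternative is sound in outline and does avoid the delicate cancellation, but it needs this slightly sharper tail estimate; the $1/T$ version just misses the stated constant.
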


\begin{proof}
Let $S$ denote the triple sum.  Then we have
$$S < (1+10^{-8}) \sum_{\substack{q \text{ prime} \\ q > 10^9}} \sum_{j=5}^\infty \frac{\phi(3^j)}{fq^{2f}} <  (1+10^{-8}) \sum_{j=5}^\infty \sum_{\substack{t=1 \\ 1 + 3^j t > 10^9}}^\infty \frac{2}{3^{j+1}t^2}.$$
We rearrange this to get
$$S  < \frac{1+10^{-8}}{3^5} \cdot \frac{\pi^2}{6} - \sum_{j=5}^\infty \sum_{\substack{t=1 \\ 1 + 3^j t \leq 10^9}}^\infty \frac{2}{3^{j+1}t^2} < 1.1 \times 10^{-8}.$$
\end{proof}

Thus, assuming the validity of the Cohen-Lenstra heuristics, the probability that the conjecture for the $\mathbb{Z}_3$-extension is true is at least 99.999998\%.

\section{The $\mathbb{Z}_5$-extension}
We now study the following special case of our Main Conjecture:

\begin{conjecture}
For any positive integer $n$, the class number of $\mathbb{B}_{5, n}$ is 1.
\end{conjecture}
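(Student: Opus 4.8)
The statement is a conjecture, so strictly it cannot be \emph{proved}; following the template of the preceding two sections, the plan is instead to quantify the Cohen--Lenstra evidence for it by bounding the triple sum $S=-\log P_{T}$ that governs the (Bayesian) probability of failure. First I would record the input data that trims the Euler product. By Table~\ref{ClassNumbers} both $\mathbb{B}_{5,1}$ (Bauer) and $\mathbb{B}_{5,2}$ (Miller) have class number $1$; conditioning on $h(\mathbb{B}_{5,2})=1$ and applying the conditional-probability formula of Section~4, only the simple modules first appearing at layers $n\ge 3$ remain uncertain, that is the divisors $d=5^{j}$ with $j\ge 3$. Iwasawa's theorem (Theorem~\ref{Iwasawa}) removes $q=5$ and the parity theorem of Ichimura and Nakajima (Theorem~\ref{ParityClassNumber}) removes $q=2$, so the relevant quantity is
\[
S=-\sum_{\substack{q\ \mathrm{prime}\\ q\neq 2,5}}\ \sum_{j=3}^{\infty}\ \sum_{k\ge 2}\frac{\phi(5^{j})}{f}\,\log\!\left(1-q^{-fk}\right),
\]
where $f$ is the order of $q$ modulo $5^{j}$ and $\phi(5^{j})=4\cdot 5^{\,j-1}$.

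The engine of the proof is the Buhler--Pomerance--Robertson substitution already used for $\mathbb{Z}_{2}$ and $\mathbb{Z}_{3}$. Since $q^{f}\equiv 1\pmod{5^{j}}$ I write $q^{f}=1+5^{j}t$ with $t\ge 1$; because $j\ge 3$ forces $q^{f}\ge 1+5^{3}=126$, every term satisfies $q^{-fk}\le 126^{-2}<7\times 10^{-5}$, whence $-\log(1-q^{-fk})<(1+10^{-4})q^{-fk}$ and $\sum_{k\ge 2}q^{-fk}=\bigl(q^{f}(q^{f}-1)\bigr)^{-1}$. Bounding $q^{f}(q^{f}-1)=(1+5^{j}t)(5^{j}t)>(5^{j}t)^{2}$, using $f\ge 1$ to drop the denominator $f$, and overcounting by letting $1+5^{j}t$ range over \emph{all} $t\ge 1$ rather than only prime powers, I get
\[
S<(1+10^{-4})\sum_{j=3}^{\infty}\sum_{t=1}^{\infty}\frac{4\cdot 5^{\,j-1}}{(5^{j}t)^{2}}=(1+10^{-4})\,\frac{\pi^{2}}{6}\sum_{j=3}^{\infty}\frac{4}{5^{\,j+1}}=(1+10^{-4})\,\frac{\pi^{2}}{6}\cdot\frac{1}{125}.
\]
This is about $1.32\times 10^{-2}$, so $P_{F}<S$ yields a probability of at least about $98.7$\% that the conjecture for the $\mathbb{Z}_{5}$-extension holds.

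The difficulty here is conceptual rather than computational. Unlike the $\mathbb{Z}_{2}$ and $\mathbb{Z}_{3}$ cases, there is no Fukuda--Komatsu-type theorem excluding every prime below $10^{9}$, so I cannot confine the sum to astronomically large $q$, and the bound is of order one percent rather than $10^{-8}$. Essentially all of it sits in the single layer $j=3$, whose contribution $\tfrac{4}{5^{4}}\cdot\tfrac{\pi^{2}}{6}\approx 1.05\times 10^{-2}$ is dominated by the hypothetical primes $q\equiv 1\pmod{125}$ of order $f=1$; in other words the estimate is really a measure of our ignorance of $h(\mathbb{B}_{5,3})$, which has not been computed. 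I would note that the removal of $q=2$ via Theorem~\ref{ParityClassNumber} is immaterial to this leading term, since $2$ has maximal order modulo $5^{j}$ and so enters only at enormous $t$; by contrast, an unconditional proof that $h(\mathbb{B}_{5,3})=1$ would shift the product to $j\ge 4$ and sharpen the estimate by roughly a factor of five, to about $0.27$\%.
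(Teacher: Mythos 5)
Your setup coincides exactly with the paper's: condition on $h(\mathbb{B}_{5,1})=h(\mathbb{B}_{5,2})=1$ so that the sum starts at $j=3$, remove $q=5$ by Iwasawa's theorem and $q=2$ by the Ichimura--Nakajima parity theorem, and attack the resulting triple sum via the Buhler--Pomerance--Robertson substitution $q^{f}=1+5^{j}t$. Where you genuinely diverge is in the estimation itself: you apply the crude lattice overcount (treating every integer $1+5^{j}t$, $t\ge 1$, as if it were a prime power) to the \emph{entire} sum. That is precisely the paper's method for the $\mathbb{Z}_2$- and $\mathbb{Z}_3$-extensions, but there the Fukuda--Komatsu theorems confine the sum to $q>10^{9}$, rendering the overcount harmless; for $p=5$ no such theorem exists, and the paper compensates differently. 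It splits the double sum at $q^{f}=10^{6}$, computes the sum over \emph{actual} prime powers $q^{f}\le 10^{6}$ explicitly (obtaining $<0.001854$ --- the sparsity of genuine prime powers congruent to $1$ modulo $5^{j}$ is exactly what makes this small), and reserves the all-$t$ overcount for the tail $q^{f}>10^{6}$ (obtaining $<0.000006$), giving a triple-sum bound of $0.001868$ and hence probability $>99.8\%$. Your bound $(1+10^{-4})\,\frac{\pi^{2}}{6}\cdot\frac{1}{125}\approx 0.0132$ is valid (the injectivity of $(q,j)\mapsto(j,t)$ justifies the overcount, and your correction factors are sound), but it is roughly seven times weaker, yielding only about $98.7\%$; the loss sits exactly where you diagnose it, in the $j=3$, $f=1$ terms, where your bound pretends every integer $\equiv 1 \pmod{125}$ is prime. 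The paper's explicit finite computation is what buys back that factor, and this matters beyond cosmetics: the $p=5$ contribution is aggregated into the Main Conjecture's total bound of $0.301215$, where an extra $0.011$ would be one of the largest single contributions outside the Sophie Germain terms. Both treatments, of course, are heuristic evidence rather than proof, as you correctly note at the outset.
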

From the author's results on class numbers of cyclotomic fields of composite conductor \cite{MillerComp}, we can prove unconditionally that the class number of $\mathbb{B}_{5, 2}$ is 1.  Furthermore, Ichimura and Nakajima \cite{Ichimura} proved the class numbers of $\mathbb{B}_{5, n}$ are odd for all $n$, and Iwasawa \cite{Iwasawa} proved that $5$ does not divide the class number of $\mathbb{B}_{5, n}$ for all $n$.

To estimate the probability that all $\mathbb{B}_{5, n}$ have class number 1, we calculate the following upper bounds.

\begin{proposition}
There is an upper bound for the double sum
$$\sum_{\substack{q \,\mathrm{prime} \\ q \neq 2, 5}}  \sum_{j=3}^\infty \frac{\phi(5^j)}{fq^{2f}} < 0.001860$$
and an upper bound for the triple sum
$$- \sum_{\substack{q \,\mathrm{prime} \\ q \neq 2, 5}} \sum_{j=3}^\infty \sum_{k \geq 2} \frac{\phi(5^j)}{f}\log(1 - q^{-fk}) < 0.001868,$$
where $\phi$ is the Euler totient function, and $f$ is the order of $q$ modulo $5^j$.
\end{proposition}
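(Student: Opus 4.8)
The plan is to reduce both sums to the study of their dominant contribution, namely the primes $q$ whose order $f$ modulo $5^j$ equals $1$, exploiting throughout that the restriction $j\geq 3$ forces $q^f\equiv 1\pmod{5^3}$ and hence $q^f\geq 1+125=126$ for every term. First I would dispose of the triple sum in terms of the double sum. Writing $-\log(1-q^{-fk})=\sum_{m\geq 1}q^{-fkm}/m$ and summing the geometric series over $k\geq 2$ gives
\[
-\sum_{k\geq 2}\log(1-q^{-fk})=\sum_{m\geq 1}\frac{1}{m}\cdot\frac{q^{-2fm}}{1-q^{-fm}},
\]
whose leading term is $q^{-2f}/(1-q^{-f})$. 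Since $q^f\geq 126$, the factor $(1-q^{-f})^{-1}$ is at most $126/125$ and the terms with $m\geq 2$ are $O(q^{-4f})$; hence the triple sum equals the double sum plus a positive correction that I would bound by a rapidly convergent series dominated by its single largest term (from the smallest admissible prime). Since the two asserted bounds differ by exactly $8\times 10^{-6}$, it suffices to prove the double-sum bound $<0.001860$ together with a correction bound of about $7\times 10^{-6}<8\times 10^{-6}$, which together yield the triple-sum bound $<0.001868$.

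For the double sum $\sum_{q\neq 2,5}\sum_{j\geq 3}\tfrac{\phi(5^j)}{f}q^{-2f}$ I would split the inner sum according to $f=\operatorname{ord}_{5^j}(q)$. The terms with $f\geq 2$ are negligible: there $q^{2f}=(q^f)^2\geq(1+5^j)^2$ with $q^f$ a proper prime power, so reindexing by $q$ — using the standard structure $\operatorname{ord}_{5^j}(q)=e\cdot 5^{\max(0,j-s)}$ with $e=\operatorname{ord}_5(q)$ and $s=v_5(q^e-1)$ — bounds their total by a convergent sum over prime powers that is easily seen to lie well under the available slack. The whole mass therefore sits in the $f=1$ terms, i.e.\ the primes $q\equiv 1\pmod{5^j}$, contributing $\sum_{j\geq 3}\phi(5^j)\sum_{q\equiv 1\,(5^j)}q^{-2}$.

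To control this last sum I would compute the leading prime contributions explicitly for each small $j$ and estimate the remainder. For $j=3$ the term from the least prime $q=251\equiv 1\pmod{125}$ alone is $100/251^2\approx 0.00159$, and the subsequent primes $751,2251,\dots$, together with the contributions from $j=4,5,\dots$ (each governed by the least prime $\equiv 1\pmod{5^j}$), must be summed. After the explicit part I would bound the tail $\sum_{q>X,\,q\equiv 1(5^j)}q^{-2}$ and the large-$j$ contributions by comparison with $\sum_{n\equiv 1(5^j),\,n>X}n^{-2}$, a sum of the shape $\tfrac{1}{5^{2j}}\sum_{k}(k+5^{-j})^{-2}$ that is explicitly summable.

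The hard part is that the bound is extremely tight: the true value of the double sum is only about $10^{-5}$ below the target $0.001860$, so the slack is minuscule. Consequently the naive tail estimate that replaces the primes $q\equiv 1\pmod{5^j}$ by \emph{all} integers in that progression is far too lossy near the beginning of each progression, because the composite term $n=1+5^j$ (e.g.\ $126$ for $j=3$, with $100/126^2\approx 0.0063$) by itself already swamps the entire budget. The crux is therefore to carry the explicit prime enumeration far enough, for each relevant $j$, that the residual integer-comparison tail drops below the remaining slack, or else to invoke an explicit Brun--Titchmarsh-type upper bound for $\pi(x;5^j,1)$ to control the prime tail directly; either way the primality of $q$ must genuinely enter the tail estimate, unlike in the $\mathbb{Z}_2$ and $\mathbb{Z}_3$ cases, where the restriction $q>10^9$ made a crude bound sufficient.
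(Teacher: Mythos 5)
Your overall strategy mirrors the paper's: for the double sum, compute the initial prime-power terms explicitly and invoke the lossy comparison ``replace prime powers $q^f\equiv 1\ (\mathrm{mod}\ 5^j)$ by all integers $1+5^jt$'' only in the tail; for the triple sum, reduce to the double sum via the geometric series over $k$. The paper organizes the double sum slightly differently than you do: it does not separate $f=1$ from $f\geq 2$, but cuts uniformly at $q^f\leq 10^6$, computing that finite part explicitly (getting $<0.001854$) and bounding the remainder by $\sum_{j\geq 3}\sum_{t:\,1+5^jt>10^6}4/(5^{j+1}t^2)<0.000006$. Still, this is the same idea, and your diagnosis of why the naive progression-comparison fails near the start of each progression (the composite term $126$ alone contributing $\approx 0.0063$) is exactly the right point.

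However, your reduction of the triple sum to the double sum has a genuine numerical gap. You work with $q^f\geq 1+125=126$, so your bound on the geometric factor is $(1-q^{-f})^{-1}\leq 126/125$. With that constant the correction you must add to the double sum is bounded only by roughly
$$\frac{1/126}{1-1/126}\times 0.001860=\frac{0.001860}{125}\approx 1.5\times 10^{-5},$$
which exceeds the available slack $0.001868-0.001860=8\times 10^{-6}$; equivalently, $\tfrac{126}{125}\times 0.001860\approx 0.001875>0.001868$, so the stated bound does not follow. Your claimed correction of ``about $7\times 10^{-6}$'' is only obtainable from the sharper input that the smallest \emph{prime power} congruent to $1$ modulo $125$ is $251$, not $126$ (indeed $126=2\cdot 3^2\cdot 7$ is not a prime power). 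This is precisely the observation the paper makes: $q^f\geq 1+5^3\cdot 2=251$, whence $(1-q^{-f})^{-1}\leq (1-1/251)^{-1}\leq 1.004$ and $-\log(1-q^{-fk})<1.000008\,q^{-fk}$, giving $1.000008\times 1.004\times 0.001860<0.001868$. Your plan closes once you replace $126$ by $251$ throughout the triple-sum step; as written, the constants do not close the bound.
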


Using this result, and assuming the validity of the Cohen-Lenstra heuristics, the probability that the conjecture for the $\mathbb{Z}_5$-extension is true is at least $$e^{-0.001868} > 99.8\%.$$

\begin{proof}
We break the double sum into two parts:
$$\sum_{\substack{q \,\mathrm{prime} \\ q \neq 2, 5}} \sum_{j=3}^\infty \frac{\phi(5^j)}{fq^{2f}} = \sum_{\substack{q \,\mathrm{prime} \\ q \neq 2, 5}} \sum_{\substack{j=3 \\ q^f \leq 10^6}}^\infty \frac{\phi(5^j)}{fq^{2f}} + \sum_{\substack{q \,\mathrm{prime} \\ q \neq 2, 5}} \sum_{\substack{j=3 \\ q^f > 10^6}}^\infty \frac{\phi(5^j)}{fq^{2f}}.$$
The first term on the right-hand side is in fact a finite sum, which we calculate explicitly
$$\sum_{\substack{q \,\mathrm{prime} \\ q \neq 2, 5}} \sum_{\substack{j=3 \\ q^f \leq 10^6}}^\infty \frac{\phi(5^j)}{fq^{2f}} < 0.001854.$$
Similarly to the argument used for the $\mathbb{Z}_2$- and $\mathbb{Z}_3$-extensions, we note that $q^f = 1 + 5^j t$ for some positive integer $t$, so the second term can be bounded by
$$ \sum_{\substack{q \,\mathrm{prime} \\ q \neq 2, 5}} \sum_{\substack{j=3 \\ q^f > 10^6}}^\infty \frac{\phi(5^j)}{fq^{2f}} < \sum_{j=3}^\infty \sum_{\substack{t = 1 \\ 1 + 5^jt > 10^6}}^\infty \frac{4(5^{j-1})}{(1+5^jt)^2} < \sum_{j=3}^\infty \sum_{\substack{t = 1 \\ 1 + 5^jt > 10^6}}^\infty \frac{4}{5^{j+1}t^2}.$$
We rearrange this to get
$$ \sum_{\substack{q \,\mathrm{prime} \\ q \neq 2, 5}} \sum_{\substack{j=3 \\ q^f > 10^6}}^\infty \frac{\phi(5^j)}{fq^{2f}} < \frac{1}{5^3} \cdot \frac{\pi^2}{6} - \sum_{j=3}^\infty \sum_{\substack{t = 1 \\ 1 + 5^jt \leq 10^6}}^\infty \frac{4}{5^{j+1}t^2} < 0.000006.$$
Thus, we have the upper bound
$$\sum_{\substack{q \,\mathrm{prime} \\ q \neq 2, 5}} \sum_{j=3}^\infty \frac{\phi(5^j)}{fq^{2f}} < 0.001854 + 0.000006 = 0.001860.$$
Using this upper bound for the double sum, we can find an upper bound for the triple sum.  Since $q^f \geq 1 + 5^3 \cdot 2 = 251$ and $q^{fk} \geq 251^2 = 63001$, we have
$$-\log(1-q^{-fk}) \leq \frac{-\log(1-1/63001)}{1/63001}q^{-fk} < 1.000008 q^{-fk}.$$
We also have that
$$\sum_{k=2}^\infty q^{-fk} = q^{-2f}(1-q^{-f})^{-1} \leq q^{-2f}(1-1/251)^{-1} \leq 1.004 q^{-2f}.$$
Thus, we have an upper bound for the triple sum
$$- \sum_{\substack{q \,\mathrm{prime} \\ q \neq 2, 5}} \sum_{j=3}^\infty \sum_{k \geq 2} \frac{\phi(5^j) \log(1 - q^{-fk})}{f} < 1.000008 \times 1.004 \times 0.001860 < 0.001868.$$
\end{proof}

\section{The $\mathbb{Z}_p$-extensions for $7 \leq p \leq 31$}
We now study the following special case of our Main Conjecture:

\begin{conjecture}
For any positive integer $n$ and any prime $p$ such that $7 \leq p \leq 31$, the class number of $\mathbb{B}_{p, n}$ is 1.
\end{conjecture}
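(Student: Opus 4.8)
The plan is to follow the same strategy as in Sections 5--7, producing for each prime $p$ in the range $7 \le p \le 31$ a heuristic estimate of the probability that every layer $\mathbb{B}_{p,n}$ has class number $1$, and then combining these eight estimates. The starting point is the Cohen--Lenstra formula derived in Section 4, namely $\operatorname{Prob}(h(\mathbb{B}_{p,n})=1) = \prod_{q \ne p}\prod_{j=1}^{n}\prod_{k \ge 2}(1-q^{-fk})^{\phi(p^j)/f}$, where $f$ is the order of $q$ modulo $p^j$. I would first cut this product down using the results collected in the introduction. By Iwasawa's Theorem \ref{Iwasawa} the prime $q=p$ never divides the class number, so it is already omitted; by the Ichimura--Nakajima Theorem \ref{ParityClassNumber} every $\mathbb{B}_{p,n}$ with $p<500$ has odd class number, so $q=2$ may also be dropped. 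Finally the first layers are known to be trivial: $\mathbb{B}_{7,1}$ and $\mathbb{B}_{11,1}$ unconditionally, $\mathbb{B}_{13,1}$, $\mathbb{B}_{17,1}$, $\mathbb{B}_{19,1}$ by Theorem \ref{NewClassNumbers}, and $\mathbb{B}_{23,1}$, $\mathbb{B}_{29,1}$, $\mathbb{B}_{31,1}$ under GRH. Conditioning on these, the relevant conditional probability (the formula at the end of Section 4) runs only over $j \ge 2$.

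Thus for each $p$ the quantity to estimate is $\prod_{q \ne 2, p}\prod_{j \ge 2}\prod_{k \ge 2}(1 - q^{-fk})^{\phi(p^j)/f}$, and as before I would pass to $-\log$ and bound the resulting triple sum. Since the smallest admissible prime is now $q=3$ and the smallest admissible index is $j=2$, we always have $q^f \ge 1 + p^2 \ge 50$, so $-\log(1-q^{-fk})$ is comparable to $q^{-fk}$ up to a constant very close to $1$; summing the geometric series over $k \ge 2$ then bounds the triple sum by a small multiple of the double sum $\sum_{q \ne 2,p}\sum_{j \ge 2}\phi(p^j)/(f\,q^{2f})$, which is the real object of interest.

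To bound this double sum I would use the splitting from the $\mathbb{Z}_5$ case: separate the terms with $q^f$ below a fixed threshold (say $q^f \le 10^6$), a finite sum to be evaluated explicitly by machine, from the tail $q^f > 10^6$. For the tail the identity $q^f = 1 + p^j t$ with $t$ a positive integer gives $q^{2f} > (p^j t)^2$, and using $\phi(p^j) = p^{j-1}(p-1) < p^j$ together with $f \ge 1$ bounds each term by $(p-1)/(p^{\,j+1} t^2)$; summing over $t$ contributes a factor $\pi^2/6$ and summing over $j \ge 2$ gives a closed form, from which one subtracts the finitely many terms already included in the explicit part. Carrying this out for each of the eight primes and adding the resulting bounds on $-\log P_T$ yields the aggregate estimate for the whole range.

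The main obstacle is that these bounds are genuinely weaker than those for $p=2,3,5$. For the small primes the excluded $q$-range was enormous ($q<10^9$) thanks to Fukuda--Komatsu, whereas here only $q=2$ is excluded and only the first layer is known, so the double sum starts at $j=2$ with a comparatively large factor $\phi(p^j)$ and with the small prime $q=3$ allowed, where the order $f$ can be small and hence $q^{2f}$ modest. The delicate point is therefore the explicit finite part: one must search over enough pairs $(q,j)$ with $q^f$ small to ensure the analytic tail dominates the remainder, and check that the total over all eight primes stays well below $1$, so that, combined with the other ranges, the overall $-\log P_T$ remains the $\approx 0.30$ advertised in the introduction. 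I expect the $p=7$ term to be the largest single contributor, since there $\phi(7^j)$ is large relative to $p$ and no layer beyond the first is known.
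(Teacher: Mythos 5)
Your proposal follows essentially the same route as the paper: condition on the known first-layer results so the sums start at $j=2$, drop $q=2$ via Ichimura--Nakajima and $q=p$ via Iwasawa, split the double sum $\sum_{q\neq 2,p}\sum_{j\geq 2}\phi(p^j)/(f q^{2f})$ into an explicit part with $q^f \leq 10^6$ and an analytic tail bounded via $q^f = 1+p^j t$, and convert to the triple sum with constants close to $1$. The paper's Section 8 does exactly this (explicitly modeled on the $\mathbb{Z}_5$ case), and your expectation that $p=7$ dominates is confirmed by its table of bounds.
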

As discussed earlier, we have proved that the class numbers of $\mathbb{B}_{p, 1}$ are 1 for $p \leq 31$ (conditional on GRH in the cases of $p=23$, 29 and 31).  Moreover, Ichimura and Nakajima \cite{Ichimura} have proven that the class numbers of $\mathbb{B}_{p, n}$  for $p \leq 500$ are odd, and  we have Iwasawa's result \cite{Iwasawa} that $p$ does not divide the class number of $\mathbb{B}_{p, n}$.

To estimate the probability that all $\mathbb{B}_{p, n}$,  $7 \leq p \leq 31$, have class number 1, we calculate the following upper bounds.

\begin{proposition}
For primes $p$ such that $7 \leq p \leq 31$, there are upper bounds for the double sums
\[\sum_{\substack{q \,\mathrm{prime} \\ q \neq 2, p}} \sum_{j=2}^\infty \frac{\phi(p^j)}{fq^{2f}} < \begin{cases}
0.001617 & \text{if } p = 7, \\
0.000896 & \text{if } p = 11, \\
0.000446 & \text{if } p = 13, \\
0.000051 & \text{if } p = 17, \\
0.000026 & \text{if } p = 19, \\
0.000017 & \text{if } p = 23, \\
0.000040 & \text{if } p = 29, \\
0.000018 & \text{if } p = 31, \\
  \end{cases}\]
and upper bounds for the triple sums
\[- \sum_{\substack{q \,\mathrm{prime} \\ q \neq 2, p}} \sum_{j=2}^\infty \sum_{k \geq 2} \frac{\phi(p^j) \log(1 - q^{-fk})}{f} < \begin{cases}
0.001626 & \text{if } p = 7, \\
0.000901 & \text{if } p = 11, \\
0.000449 & \text{if } p = 13, \\
0.000052 & \text{if } p = 17, \\
0.000027 & \text{if } p = 19, \\
0.000018 & \text{if } p = 23, \\
0.000041 & \text{if } p = 29, \\
0.000019 & \text{if } p = 31, \\
  \end{cases}.\]
where $\phi$ is the Euler totient function, and $f$ is the order of $q$ modulo $p^j$.
\end{proposition}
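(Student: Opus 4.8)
The plan is to treat each of the eight primes $p$ by the same two-step strategy used above for the $\mathbb{Z}_5$-extension: first bound the double sum, then deduce the triple-sum bound from it. Throughout I write $\phi(p^j) = (p-1)p^{j-1}$ and recall that $f$ denotes the order of $q$ modulo $p^j$, so that $p^j \mid q^f - 1$; equivalently $q^f = 1 + p^j t$ for some positive integer $t$. The reason the sums begin at $j = 2$ is that $h(\mathbb{B}_{p,1}) = 1$ has been established for all these $p$ (Theorem~\ref{NewClassNumbers} together with the entries of Table~\ref{ClassNumbersNew}), so one conditions on that event; the exclusion of both $q = 2$ and $q = p$ reflects the Ichimura--Nakajima parity result (Theorem~\ref{ParityClassNumber}) and Iwasawa's theorem (Theorem~\ref{Iwasawa}).

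First I would split the double sum according to the size of $q^f$, fixing a cutoff $B$ of order $10^6$:
\[
\sum_{\substack{q \text{ prime} \\ q \neq 2, p}} \sum_{j=2}^\infty \frac{\phi(p^j)}{f q^{2f}}
= \sum_{\substack{q \neq 2, p \\ q^f \leq B}} \sum_{j=2}^\infty \frac{\phi(p^j)}{f q^{2f}}
+ \sum_{\substack{q \neq 2, p \\ q^f > B}} \sum_{j=2}^\infty \frac{\phi(p^j)}{f q^{2f}}.
\]
The first sum is finite: the constraint $q^f \le B$ forces $q \le B$, and for each such $q$ only finitely many $j$ contribute since the order of $q$ modulo $p^j$ grows with $j$. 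This term I would evaluate explicitly by machine for each $p$; it supplies the bulk of each stated bound. For the tail I substitute $q^f = 1 + p^j t$ and use $(1+p^j t)^2 > p^{2j} t^2$ together with $\phi(p^j) = (p-1)p^{j-1}$ to get
\[
\sum_{\substack{q \neq 2, p \\ q^f > B}} \sum_{j=2}^\infty \frac{\phi(p^j)}{f q^{2f}}
< \sum_{j=2}^\infty \sum_{\substack{t \geq 1 \\ 1 + p^j t > B}} \frac{(p-1)p^{j-1}}{(1+p^j t)^2}
< \sum_{j=2}^\infty \frac{p-1}{p^{j+1}} \sum_{\substack{t \geq 1 \\ 1 + p^j t > B}} \frac{1}{t^2}.
\]
Since $\sum_{j=2}^\infty (p-1)/p^{j+1} = 1/p^2$, completing the inner sum to $\pi^2/6$ and subtracting the finitely many omitted terms gives
\[
\sum_{\substack{q \neq 2, p \\ q^f > B}} \sum_{j=2}^\infty \frac{\phi(p^j)}{f q^{2f}}
< \frac{1}{p^2} \cdot \frac{\pi^2}{6} - \sum_{j=2}^\infty \sum_{\substack{t \geq 1 \\ 1 + p^j t \leq B}} \frac{p-1}{p^{j+1} t^2},
\]
which is explicitly computable and small. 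Adding the two pieces yields the eight double-sum bounds.

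To pass from the double sum to the triple sum I would argue exactly as in the $\mathbb{Z}_5$ case. The smallest value of $q^f$ occurring is the least prime power $Q_p \equiv 1 \pmod{p^2}$ with base $\neq 2, p$ (for instance $Q_p = 1 + p^2 t_0$ for the smallest admissible $t_0$). Since $k \ge 2$ forces $q^{fk} \ge Q_p^2$ and $x \mapsto -\log(1-x)/x$ is increasing, I can bound $-\log(1-q^{-fk}) \le C_1(p)\, q^{-fk}$ with $C_1(p) = \frac{-\log(1 - Q_p^{-2})}{Q_p^{-2}}$, and sum the geometric series $\sum_{k \ge 2} q^{-fk} = q^{-2f}(1-q^{-f})^{-1} \le C_2(p)\, q^{-2f}$ with $C_2(p) = (1 - Q_p^{-1})^{-1}$. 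Multiplying the double-sum bound by $C_1(p)\,C_2(p)$, both factors being extremely close to $1$, produces the slightly larger triple-sum bounds.

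The genuinely laborious step is the explicit evaluation of the finite ``$q^f \le B$'' sum: for each $p$ it ranges over all primes $q \le B$ with $q \neq 2, p$, and for each such $q$ over the powers $p^j$ for which $q$ has small order, so the order computations and the accumulation of $\phi(p^j)/(f q^{2f})$ must be carried to enough precision that the truncation error stays below the margin between the computed value and the stated bound. The analytic tail estimate and the log-to-linear comparison are routine refinements of the arguments already given for $p = 2, 3, 5$; the only case-specific ingredient in those steps is the constant $Q_p$, which drives the correction factors $C_1(p)$ and $C_2(p)$ toward $1$ as $p$ grows and accounts for the decreasing size of the bounds across the table.
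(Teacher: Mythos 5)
Your proposal is correct and follows essentially the same route as the paper: the same split of the double sum at $q^f \le 10^6$ with explicit machine evaluation of the finite part, the same tail estimate via $q^f = 1 + p^j t$, the bound $(1+p^jt)^2 > p^{2j}t^2$, and completion of $\sum_t 1/t^2$ to $\pi^2/6$, and the same passage from double to triple sum by the bounds $-\log(1-x) \le Cx$ and the geometric series in $k$. The only cosmetic difference is that you use per-prime constants $C_1(p)\,C_2(p)$ built from the least prime power $Q_p \equiv 1 \pmod{p^2}$, whereas the paper uses the single uniform factor $1.000013 \times 1.005103 < 1.005117$ coming from $197 = 1 + 4\cdot 7^2$, the minimum over all $7 \le p \le 31$ and $j \ge 2$; this is an inessential (and slightly sharper) variant.
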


Using this result and assuming the validity of the Cohen-Lenstra heuristics, the conjecture that all the fields in all the $\mathbb{Z}_p$-extensons, $7 \leq p \leq 31$, have class number 1 is true with probability at least $$e^{-(0.001626+0.000901+0.000449+0.000052+0.000027+0.000018+0.000041+0.000019)} > 99.6\%.$$

\begin{proof}
We follow closely the proof used for the $\mathbb{Z}_5$-extension.  First we break the double sum into two parts,
$$\sum_{\substack{q \,\mathrm{prime} \\ q \neq 2, p}} \sum_{j=2}^\infty \frac{\phi(p^j)}{fq^{2f}} = \sum_{\substack{q \,\mathrm{prime} \\ q \neq 2, p}} \sum_{\substack{j=2 \\ q^f \leq 10^6}}^\infty \frac{\phi(p^j)}{fq^{2f}} + \sum_{\substack{q \,\mathrm{prime} \\ q \neq 2, p}} \sum_{\substack{j=2 \\ q^f > 10^6}}^\infty \frac{\phi(p^j)}{fq^{2f}}.$$
The first term on the right-hand side is a finite sum, which we  calculate explicitly.
\[\sum_{\substack{q \,\mathrm{prime} \\ q \neq 2, p}} \sum_{\substack{j=2 \\ q^f \leq 10^6}}^\infty \frac{\phi(p^j)}{fq^{2f}} < \begin{cases}
0.001611 & \text{if } p = 7, \\
0.000890 & \text{if } p = 11, \\
0.000440 & \text{if } p = 13, \\
0.000045 & \text{if } p = 17, \\
0.000020 & \text{if } p = 19, \\
0.000011 & \text{if } p = 23, \\
0.000034 & \text{if } p = 29, \\
0.000012 & \text{if } p = 31. \\
  \end{cases}\]

Since $q^f = 1 + p^j t$ for some positive integer $t$, the second term can be bounded by
$$ \sum_{\substack{q \,\mathrm{prime} \\ q \neq 2, p}} \sum_{\substack{j=2 \\ q^f > 10^6}}^\infty \frac{\phi(p^j)}{fq^{2f}} < \sum_{j=2}^\infty \sum_{\substack{t = 1 \\ 1 + p^jt > 10^6}}^\infty \frac{(p-1)(p^{j-1})}{(1+p^jt)^2} < \sum_{j=2}^\infty \sum_{\substack{t = 1 \\ 1 + p^jt > 10^6}}^\infty \frac{p-1}{p^{j+1}t^2}.$$
We rearrange this to get
$$ \sum_{\substack{q \,\mathrm{prime} \\ q \neq 2, p}} \sum_{\substack{j=2 \\ q^f > 10^6}}^\infty \frac{\phi(p^j)}{fq^{2f}} < \frac{1}{p^2} \cdot \frac{\pi^2}{6} - \sum_{j=2}^\infty \sum_{\substack{t = 1 \\ 1 + p^jt \leq 10^6}}^\infty \frac{p-1}{p^{j+1}t^2} < 0.000006,$$
which is valid for any prime $7 \leq p \leq 31$.  Thus, we have the upper bound
\[\sum_{\substack{q \,\mathrm{prime} \\ q \neq 2, p}} \sum_{j=2}^\infty \frac{\phi(p^j)}{fq^{2f}} < \begin{cases}
0.001617 & \text{if } p = 7, \\
0.000896 & \text{if } p = 11, \\
0.000446 & \text{if } p = 13, \\
0.000051 & \text{if } p = 17, \\
0.000026 & \text{if } p = 19, \\
0.000017 & \text{if } p = 23, \\
0.000040 & \text{if } p = 29, \\
0.000018 & \text{if } p = 31. \\
  \end{cases}\]

Using these upper bounds for the double sums, we can find upper bounds for the triple sums.  The smallest prime power $q^f$ that is congruent to 1 modulo $p^j$ for $7 \leq p \leq 31$ and $j \geq 2$ is $1+7^2 \cdot 4 = 197$.  Since $q^{fk} \geq 197^2 = 38809$, we have
$$-\log(1-q^{-fk}) \leq \frac{-\log(1-1/38809)}{1/38809}q^{-fk} < 1.000013 q^{-fk}.$$
We also have that
$$\sum_{k=2}^\infty q^{-fk} = q^{-2f}(1-q^{-f})^{-1} \leq q^{-2f}(1-1/197)^{-1} < 1.005103 q^{-2f}.$$
Thus, we have upper bounds for the triple sums
\[- \sum_{\substack{q \,\mathrm{prime} \\ q \neq 2, p}} \sum_{j=2}^\infty \sum_{k \geq 2} \frac{\phi(p^j) \log(1 - q^{-fk})}{f} < 1.005117 \sum_{\substack{q \,\mathrm{prime} \\ q \neq 2, p}} \sum_{\substack{j=2 \\ q^f \leq 10^6}}^\infty \frac{\phi(p^j)}{fq^{2f}}
< \begin{cases}
0.001626 & \text{if } p = 7, \\
0.000901 & \text{if } p = 11, \\
0.000449 & \text{if } p = 13, \\
0.000052 & \text{if } p = 17, \\
0.000027 & \text{if } p = 19, \\
0.000018 & \text{if } p = 23, \\
0.000041 & \text{if } p = 29, \\
0.000019 & \text{if } p = 31. \\
  \end{cases}.\]
\end{proof}

\section{First layers of the $\mathbb{Z}_p$-extensions}
The first layers $\mathbb{B}_{p,1}$ of the cyclotomic $\mathbb{Z}_p$-extensions of the rationals are of prime degree $p$.  We consider another special case of our Main Conjecture:

\begin{conjecture}
For any prime $p$, the class number of $\mathbb{B}_{p, 1}$ is 1.
\end{conjecture}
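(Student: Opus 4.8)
The statement is a conjecture rather than a theorem, so what I would actually establish, following the template of Sections~5--8, is an explicit upper bound on $-\log P_T$, where $P_T$ is the Bayesian probability (in the sense fixed above) that \emph{every} first layer has class number $1$; an upper bound well below $1$ then yields a lower bound on $P_T$ close to $1$. Specializing the formula of Section~4 to degree $p$ (so $n=1$, the only divisor $d>1$ is $d=p$, $\phi(p)=p-1$, and $f=\operatorname{ord}_p(q)$) and multiplying over all primes, the plan is to bound
$$
-\log P_T = -\sum_{p\text{ prime}}\ \sum_{\substack{q\text{ prime}\\ q\neq p}}\ \sum_{k\geq 2} \frac{p-1}{f}\,\log\!\bigl(1-q^{-fk}\bigr).
$$
The decisive new feature, absent from the fixed-$p$ sums of the earlier sections, is that the outermost sum now ranges over \emph{all} primes $p$; the purely geometric majorizations used before, which discard the primality of $q$ and replace the inner sum by a multiple of $\sum_t(1+p^jt)^{-2}$, produce terms of size $\asymp 1/p$ and hence diverge. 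Convergence must instead be extracted from the primality of $q$, which is precisely the role of the sieve results of Section~10.

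First I would isolate the genuinely dangerous terms. The Rank Theorem makes the heuristic transparent: a prime $q\neq p$ can divide $h(\mathbb{B}_{p,1})$ only as a full power $q^{f}$ with $f=\operatorname{ord}_p(q)$, so the terms with $f\geq 2$ are heavily suppressed (one has $q^{-fk}\leq q^{-4}$), and the dominant contribution comes from $f=1$, i.e.\ from primes $q\equiv 1\pmod p$. Using $-\log(1-x)\le(1+\varepsilon)x$ for small $x$ and summing the inner geometric series in $k$, the problem reduces to estimating
$$
\sum_{p\text{ prime}}(p-1)\sum_{\substack{q\text{ prime}\\ q\equiv 1\,(\mathrm{mod}\ p)}} q^{-2},
$$
together with an elementary, manifestly convergent remainder coming from $f\ge2$ and $k\ge3$. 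Writing $q=1+pt$, this inner sum is controlled by its \emph{smallest} term; for the generic prime $p$ the least prime $\equiv1\pmod p$ is of size $\gg p\log p$, contributing $\ll 1/(p(\log p)^2)$, but for a \emph{Sophie Germain} prime $p$ the least such prime is $q=2p+1$, contributing $\approx 1/(4p)$. Equivalently, reversing the order of summation gives $\sum_q q^{-2}\sum_{p\mid q-1}(p-1)$, whose large values occur exactly when $q$ is a safe prime $q=2p+1$.

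At this stage I would bring in the known class-number data to remove the most hazardous small terms outright: Theorem~\ref{NewClassNumbers} (together with Table~\ref{ClassNumbersNew}) disposes of all $p\le 31$, Iwasawa's Theorem~\ref{Iwasawa} removes the terms $q=p$, Theorem~\ref{ParityClassNumber} of Ichimura and Nakajima removes $q=2$ for $p<500$, and the computations of Fukuda and his collaborators remove the leading term $q=2p+1$ for every Sophie Germain prime $p<1000$. What then remains is a tail sum whose size is governed by $\sum_{p\text{ Sophie Germain}} 1/p$, together with the analogous sums over primes $p$ for which $kp+1$ is prime for small even $k$. \textbf{This is the crux and the main obstacle:} I would obtain an \emph{explicit} upper bound for the reciprocal sum of the Sophie Germain primes by feeding Siebert's explicit upper sieve bound for the counting function of the prime pairs $(p,2p+1)$, as sharpened by Riesel and Vaughan, into a partial summation; the $(\log x)^{-2}$ decay furnished by the sieve is exactly what makes $\sum_{p\text{ Sophie Germain}} 1/p$ converge, and the real difficulty lies in making the sieve constants effective rather than merely asymptotic. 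Assembling the explicit Sophie Germain bound, the analogous bounds for the $kp+1$ families, and the elementary remainder, I expect $-\log P_T$ to come out as a small explicit constant, contributing to the overall figure of roughly $0.30$ quoted in the introduction and thereby yielding a high Bayesian probability for the conjecture.
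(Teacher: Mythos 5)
Your proposal follows essentially the same route as the paper: Sections 9--11 bound the same triple sum for $p>31$ by isolating the dominant $f=1$ terms (primes $q\equiv 1\pmod p$), controlling them via Siebert's and Riesel--Vaughan's explicit sieve bounds on primes $ap+1$ (including the Sophie Germain reciprocal sum), treating $f\geq 2$ as an elementary convergent remainder, and then subtracting contributions already ruled out by Iwasawa, Ichimura--Nakajima, and Fukuda's computations to reach $-\log P_T < 0.286$, i.e.\ $P_T > 75\%$. The only ingredient you omit is the paper's additional use of Hakkarainen's tables for $\mathbb{B}_{37,1}$, $\mathbb{B}_{41,1}$, $\mathbb{B}_{43,1}$, which is a minor numerical refinement.
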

From our earlier calculations, we know that this is true for primes $p \leq 19$ and also for $p=23$, 29 and 31 under the assumption of GRH.  Assuming the Cohen-Lenstra heuristics, the probability $P_T$ that all first layers have class number 1 can be estimated by
$$P_T = \prod_{\substack{p \text{ prime} \\ p > 31}} \prod_{\substack{q \text{ prime} \\ q \neq p}} \prod_{k \geq 2} (1 - q^{-fk})^{\phi(p)/f},$$
where $f$ is the order of $q$ modulo $p$.  Taking logarithms, we get the triple sum
$$-\log{P_T} = - \sum_{p > 31} \sum_{q \neq p} \sum_{k \geq 2} \frac{p-1}{f} \log(1 - q^{-fk}).$$
Since $q^f \equiv 1 (\bmod\, p)$, we have $q^f \geq 83$ and $q^{-fk} \leq 1/83^2.$  It follows that
$$- \log(1 - q^{-fk}) \leq \frac{-\log(1-1/83^2)}{1/83^2}q^{-fk} < 1.0001 q^{-fk}.$$
This gives us an upper bound
$$E_1  < 1.0001 \sum_{p > 31} \sum_{q \neq p} \sum_{k \geq 2} \frac{p-1}{fq^{fk}} = 1.0001 \sum_{p > 31} \sum_{q \neq p} \frac{p-1}{fq^{f}(q^f - 1)} < 1.02 \sum_{p > 31} \sum_{q \neq p} \frac{p-1}{fq^{2f}}.$$
In the above double sum, the largest contribution is given by terms with $f=1$:
$$\sum_{p > 31} \sum_{\substack{q \text{ prime} \\ q \equiv 1 (\bmod\, p)}} \frac{p-1}{q^{2}} = \sum_{a \geq 1} \sum_{\substack{p > 31 \text{ prime} \\ ap+1 \text{ prime}}} \frac{p-1}{(ap+1)^{2}}.$$
The sum $\sum_{q \equiv 1(p)} (p-1)/q^2$ is $O(1/p)$, which is not good enough for convergence of the sum over $p$.  However, if we average over the various moduli $p$, we can hope for a $(\log p)^2$ savings, enough to show convergence.  To achieve such savings, we will apply an explicit sieve theory result to sums over primes of the form $ap+1$.

\section{Sums over Sophie Germain primes}
Motivated by the discussion in the previous section, we first consider sums over the \emph{Sophie Germain primes}, i.e. primes $p$ such that $2p+1$ is also prime, and more generally sums over primes $p$ such that $ap+1$ is prime, with $a$ a given fixed even integer.  Let $\pi_{ap+1}(x)$ denote the density of such primes,
$$\pi_{ap+1}(x) = \#\{p < x :  p, ap+1 \text{ both prime}\}.$$
The methods of Hardy and Littlewood give the explicit conjectural asymptotics
$$\pi_{ap+1}(x) \sim 2 C_2 \frac{x}{\log^2{x}}\prod_{p|a,\; p \neq 2} \frac{p-1}{p-2},$$
where $C_2$, the \emph{twin prime constant}, is
$$C_2 = \prod_{p > 2} \frac{p(p-2)}{(p-1)^2} \approx 0.6601618158.$$

However, to get explicit upper bounds for sums over such primes, we need explicit density estimates.  Fortunately, Siebert has provided us this key result \cite{Siebert}, which is greater than the conjectural asymptotic by a factor of 8.  Furthermore, for values of $x$ greater than $1.63 \times 10^{10}$, we can strengthen Siebert's bound by incorporating results of Riesel and Vaughan \cite[proof of Lemma 5]{RieselVaughan}.

\begin{theorem}[Siebert \cite{Siebert}, Riesel and Vaughan \cite{RieselVaughan}]
Let $a$ be an even integer.  Then there is the explicit density estimate
$$\pi_{ap+1}(x) \leq 16 C_2 \min \left( \frac{x}{(\log x)^2}, \frac{x}{(\log x)^2 + F(x)} + 2\sqrt{x} \right) \prod_{p|a,\; p \neq 2} \frac{p-1}{p-2},$$
where
$$F(x) = 8.463433 \log x - 9.260623 - 9310.077 x^{-1/6} - 29.50889 x^{-1/2}.$$
\end{theorem}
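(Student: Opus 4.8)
The plan is to recognize $\pi_{ap+1}(x)$ as the counting function of a two--form sieve problem and to assemble the stated inequality directly from the two quoted sources, keeping careful track of the explicit constants. We are counting primes $p < x$ for which the linear form $ap+1$ is also prime, i.e.\ integers $n<x$ for which both $n$ and $an+1$ are prime. The local input to any sieve is the number $\omega(\ell)$ of residue classes modulo a prime $\ell$ that must be removed: for $\ell \nmid a$ the forms $X$ and $aX+1$ have two distinct roots modulo $\ell$, so $\omega(\ell)=2$, while for an odd prime $\ell \mid a$ the form $aX+1 \equiv 1$ never vanishes, so $\omega(\ell)=1$; and since $a$ is even, $aX+1$ is always odd, giving $\omega(2)=1$. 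Assembling the singular series $\prod_\ell \bigl(1-\omega(\ell)/\ell\bigr)\bigl(1-1/\ell\bigr)^{-2}$ from these local densities collapses, after separating the prime $2$, to $2 C_2 \prod_{\ell \mid a,\, \ell \neq 2} \frac{\ell-1}{\ell-2}$, which is exactly the Hardy--Littlewood constant appearing in the conjectural asymptotic.

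First I would invoke Siebert's explicit upper--bound sieve. For this configuration of two linear forms Siebert's theorem produces an unconditional upper bound equal to eight times the conjectural main term; multiplying the constant $2C_2 \prod_{\ell\mid a,\,\ell\ne2}\frac{\ell-1}{\ell-2}$ by $8$ yields the first entry of the minimum,
$$\pi_{ap+1}(x) \leq 16\,C_2\,\frac{x}{(\log x)^2}\prod_{\ell \mid a,\; \ell \neq 2} \frac{\ell-1}{\ell-2}.$$
I would check that Siebert's hypotheses (in particular the evenness of $a$, which keeps the sieve nondegenerate) are met, so that this bound is valid for all $x$ in the range of interest.

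Next I would sharpen the denominator for large $x$ using Riesel and Vaughan. In the proof of their Lemma~5 the sieve is run with an explicit evaluation of the lower--order terms, which effectively replaces $(\log x)^2$ by $(\log x)^2 + F(x)$ and absorbs the sieve remainder into an error term bounded by $2\sqrt{x}$. Transcribing their estimate into the present normalization yields the second entry of the minimum, namely
$$\pi_{ap+1}(x) \leq 16\,C_2 \left( \frac{x}{(\log x)^2 + F(x)} + 2\sqrt{x} \right)\prod_{\ell \mid a,\; \ell \neq 2} \frac{\ell-1}{\ell-2},$$
with $F$ as stated. Since both displayed inequalities are valid upper bounds, so is their minimum, which is the assertion of the theorem.

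The only substantive work, and hence the main obstacle, is the bookkeeping of constants across the two sources: matching Siebert's singular--series convention to the factor $16\,C_2 \prod \frac{\ell-1}{\ell-2}$, extracting the precise form of $F(x)$ and the remainder $2\sqrt{x}$ from the Riesel--Vaughan argument, and verifying numerically that the correction term $F(x)$ is positive for $x > 1.63 \times 10^{10}$ (and negative below), which is exactly the regime in which the second bound improves on the first and thus governs which argument of the minimum is active. No new sieve machinery is required; the content is the explicit reconciliation of the two quoted estimates together with the elementary verification of the threshold.
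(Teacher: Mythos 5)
Your proposal is correct and matches what the paper itself does: the paper offers no independent proof of this statement, but simply assembles it from the two cited sources --- Siebert's explicit two-dimensional sieve bound (a factor $8$ above the Hardy--Littlewood constant $2C_2\prod_{p\mid a,\,p\neq 2}\frac{p-1}{p-2}$, giving the first entry of the minimum) and the explicit refinement extracted from the proof of Lemma 5 of Riesel and Vaughan (giving the second entry), the minimum of two valid upper bounds being again a valid upper bound. Your singular-series bookkeeping is right; the only slight imprecision is that $1.63\times 10^{10}$ is the threshold at which the \emph{full} second bound (including the $2\sqrt{x}$ term) overtakes the first, not the point where $F(x)$ changes sign, but this plays no role in the validity of the stated inequality.
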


To estimate upper bounds for sums over these ``$ap+1$" primes, we can calculate a certain number of terms explicitly, and then use the density estimate to get an upper bound for the remaining terms in the ``tail" of the sum.

We start with the example of an upper bound for the sum of reciprocals of the Sophie Germain primes, a result which seems to have not previously appeared in the literature.

\begin{proposition}
The sum of the reciprocals of the Sophie Germain primes has an upper bound
$$\sum_{\substack{p \,\mathrm{ prime} \\ 2p+1 \,\mathrm{ prime}}} \frac{1}{p} < 1.88584.$$
\end{proposition}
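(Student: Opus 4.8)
The plan is to split the sum $\sum 1/p$ over Sophie Germain primes into an explicit finite head and a tail controlled by the density estimate of the preceding theorem (with $a=2$). First I would fix a threshold $x_0$ (chosen large enough that the sharper Riesel--Vaughan branch of the $\min$ is in force, so $x_0 > 1.63\times 10^{10}$) and compute $\sum_{p < x_0} 1/p$ directly over the actual Sophie Germain primes below $x_0$, obtaining a rigorous numerical value for the head. This is a finite computation and contributes the bulk of the claimed bound $1.88584$.

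For the tail $\sum_{p \geq x_0} 1/p$, the key idea is partial summation (Abel summation) against the counting function $\pi_{2p+1}(t)$. Writing the tail as a Stieltjes integral $\int_{x_0}^\infty t^{-1}\, d\pi_{2p+1}(t)$ and integrating by parts, I would bound it by
\[
\frac{\pi_{2p+1}(x_0)}{x_0} \;+\; \int_{x_0}^\infty \frac{\pi_{2p+1}(t)}{t^2}\, dt,
\]
using that $\pi_{2p+1}(t)/t \to 0$. Into the integral I would substitute the explicit upper bound from the Siebert/Riesel--Vaughan theorem, namely $\pi_{2p+1}(t) \leq 16C_2\left(t/((\log t)^2 + F(t)) + 2\sqrt t\right)$ (here the product over $p\mid a,\ p\neq 2$ is empty since $a=2$, so it equals $1$). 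This reduces the tail to estimating integrals of the shape $\int_{x_0}^\infty \frac{dt}{t(\log t)^2}$ plus lower-order corrections involving $F(t)$ and the $2\sqrt t/t^2 = 2t^{-3/2}$ term.

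The principal integral $\int_{x_0}^\infty \frac{dt}{t((\log t)^2 + F(t))}$ is the one that must be handled carefully: since $F(t)$ grows like $8.46\log t$, I would first show $(\log t)^2 + F(t) \geq (\log t)^2$ for $t \geq x_0$ (discarding $F$ only loses a little and keeps things convergent), or better, retain a clean lower bound such as $(\log t)^2 + c_1\log t - c_2$ and integrate explicitly via the substitution $u = \log t$, where $\int u^{-2}\,du$ and its shifted variants are elementary. The remaining $\int_{x_0}^\infty 2\cdot 16C_2\, t^{-3/2}\,dt = 64C_2\, x_0^{-1/2}$ is tiny for the chosen $x_0$. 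Summing the head and these tail contributions should comfortably land below $1.88584$.

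The main obstacle I anticipate is \emph{calibration}: the constant $16C_2 \approx 10.56$ in the density bound is large (a factor of $8$ worse than the conjectural truth), so the tail estimate is lossy, and I must push $x_0$ high enough that the tail is genuinely small while still being able to compute the head $\sum_{p<x_0}1/p$ explicitly. The head converges slowly enough that getting three or four correct digits past the decimal requires enumerating Sophie Germain primes well into the range where the sharper $F(x)$ branch applies; balancing the size of the explicit computation against the crudeness of the analytic tail is where the real work lies, and verifying that the numerical head plus the rigorously bounded tail stays under $1.88584$ is the crux.
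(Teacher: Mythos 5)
Your skeleton --- an explicitly computed head plus a tail bounded via partial summation against the Siebert/Riesel--Vaughan estimate, with the substitution $u=\log t$ and a numerical evaluation --- is exactly the paper's proof: the paper computes the head below $10^{10}$ (worth $<1.476947$) and bounds the tail by $16C_2\int_{\log 10^{10}}^\infty \min\bigl(y^{-2},\,(y^2+F(e^y))^{-1}+2e^{-y/2}\bigr)\,dy < 0.408894$. However, the margin here has essentially zero slack ($1.476947+0.408894=1.885841$), and two of your concrete proposals for simplifying the tail integrand would fail numerically, so they are genuine gaps rather than cosmetic losses.

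First, ``discarding $F$ only loses a little'' is false: dropping $F$ reduces the tail bound to $16C_2\int_{y_0}^\infty y^{-2}\,dy = 16C_2/\log x_0 \approx 10.563/23.51 \approx 0.449$ for your $x_0\approx 1.63\times 10^{10}$, and with the head $\approx 1.478$ up to that point the total is $\approx 1.93$, well above $1.88584$. The roughly $11\%$ savings supplied by the Riesel--Vaughan term $F$ is precisely what makes the stated constant reachable. Second, your ``clean lower bound'' $(\log t)^2+c_1\log t-c_2$ overlooks that near the threshold the term $9310.077\,t^{-1/6}$ is \emph{not} lower order: at $t=1.63\times 10^{10}$ it equals $\approx 185$, nearly cancelling $8.463\log t - 9.261 \approx 190$, so that $F(x_0)\approx 5$, not $\approx 190$. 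A valid clean bound must therefore take $c_2\approx 194$, and then $\int_{y_0}^\infty dy/(y^2+8.46y-194)\approx 0.040$, giving a tail $\approx 0.423$ and a total $\approx 1.90$ --- still above the target. This is exactly why the paper integrates the exact expression $\min\bigl(y^{-2},\,(y^2+F(e^y))^{-1}+2e^{-y/2}\bigr)$ numerically instead of simplifying it; the gain from $F$ accrues only gradually as the $t^{-1/6}$ term decays. (A smaller point: the boundary term in partial summation is $-\pi_{2p+1}(x_0)/x_0$, which one simply drops; writing it with a plus sign costs another $\approx 0.002$ that the zero-slack budget cannot absorb.) To rescue your version without reproducing the paper's exact numerical integration, you would have to push $x_0$ to roughly $10^{12}$ so that Siebert's bound alone suffices (head $\approx 1.487$, tail $\approx 0.382$), at the cost of enumerating Sophie Germain primes over a range some sixty times larger than the paper uses.
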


\begin{proof}
We can calculate initial terms
$$\sum_{\substack{p < 10^{10} \text{ prime} \\ 2p+1 \text{ prime}}} \frac{1}{p} < 1.476947.$$
We use the density estimate to calculate the tail of the sum
\begin{align*}
\sum_{\substack{p > 10^{10} \text{ prime} \\ 2p+1 \text{ prime}}} \frac{1}{p} &= \int_{10^{10}}^\infty \frac{1}{x} d\pi_{2p+1}(x) < \int_{10^{10}}^\infty \frac{\pi_{2p+1}(x)}{x^2} dx \\ &\leq 16C_2 \int_{10^{10}}^\infty \min\left( \frac{1}{(\log x)^2},   \frac{1}{(\log x)^2 + F(x)} + 2x^{-1/2} \right) \frac{dx}{x},
\end{align*}
where $F(x) = 8.463433 \log x - 9.260623 - 9310.077 x^{-1/6} - 29.50889 x^{-1/2}.$
It is much easier to evaluate this integral numerically if we first make the change of variable $y = \log{x}$.
$$\sum_{\substack{p > 10^{10} \text{ prime} \\ 2p+1 \text{ prime}}} \frac{1}{p} < 16C_2 \int_{\log{10^{10}}}^\infty \min\left( \frac{1}{y^2},   \frac{1}{y^2 + F(e^y)} + 2e^{-y/2} \right) dy < 0.408894.$$
Thus, we have an upper bound
$$\sum_{\substack{p \text{ prime} \\ 2p+1 \text{ prime}}} \frac{1}{p} < 1.476947 + 0.408894 = 1.885841.$$
\end{proof}

Next is a similar result which will be used in our application to the Cohen-Lenstra heuristics.  It turns out that this sum will be the largest contributor to our estimate of the quadruple sum associated to our Main Conjecture.

\begin{proposition}
The following sum over Sophie Germain primes has an upper bound
$$ \sum_{\substack{p > 31 \,\mathrm{ prime} \\ 2p+1 \,\mathrm{ prime}}} \frac{p-1}{(2p+1)^{2}} < 0.170121.$$
\end{proposition}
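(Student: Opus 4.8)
The plan is to mirror the proof of the preceding proposition on $\sum 1/p$. Set $X = 10^{10}$ and $g(x) = \dfrac{x-1}{(2x+1)^2}$, and split
$$\sum_{\substack{p > 31\ \mathrm{prime} \\ 2p+1\ \mathrm{prime}}} g(p) = \sum_{\substack{31 < p < X \\ 2p+1\ \mathrm{prime}}} g(p) + \sum_{\substack{p > X \\ 2p+1\ \mathrm{prime}}} g(p).$$
The first (head) is a finite sum that I would evaluate explicitly by enumerating the Sophie Germain primes below $X$; the second (tail) I would bound using the Siebert--Riesel--Vaughan density estimate. A direct computation gives $g'(x) = \dfrac{5 - 2x}{(2x+1)^3}$, so $g$ is positive and strictly decreasing on $[X, \infty)$.

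For the tail, I would write it as a Stieltjes integral against $\pi_{2p+1}$ and integrate by parts. Since $g$ is decreasing with $g(x)\pi_{2p+1}(x) = O(1/\log^2 x) \to 0$, the boundary terms are harmless and
$$\sum_{\substack{p > X \\ 2p+1\ \mathrm{prime}}} g(p) \le \int_X^\infty g(x)\, d\pi_{2p+1}(x) \le \int_X^\infty \frac{2x - 5}{(2x+1)^3}\, \pi_{2p+1}(x)\, dx.$$
The cleanest route is then to use $g(x) < \dfrac{1}{4x}$, valid for all $x > 0$ because $(2x+1)^2 - 4x(x-1) = 8x+1 > 0$; this gives $\sum_{p>X} g(p) < \tfrac14 \sum_{p > X} 1/p$, and the tail $\tfrac14\sum_{p>X} 1/p < \tfrac14(0.408894)$ is already supplied by the previous proposition. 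Alternatively, one inserts the density estimate into the last integral and substitutes $y = \log x$, obtaining essentially the same value, since $\dfrac{2x-5}{(2x+1)^3}$ and $\dfrac{1}{4x^2}$ agree to relative order $1/x$ on $[X,\infty)$.

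The delicate point is that this crude per-term bound cannot be used throughout. Indeed, applying $g(p) < 1/(4p)$ to the entire sum and subtracting the exact contribution of the Sophie Germain primes $p \le 31$ yields only
$$\sum_{\substack{p > 31 \\ 2p+1\ \mathrm{prime}}} g(p) < \tfrac14\Big(1.88584 - \big(\tfrac12 + \tfrac13 + \tfrac15 + \tfrac1{11} + \tfrac1{23} + \tfrac1{29}\big)\Big) \approx 0.1709,$$
which exceeds the target $0.170121$. Hence the head must be computed with the exact summand $g(p)$ rather than $1/(4p)$: the per-term gap is $\dfrac1{4p} - g(p) = \dfrac{8p+1}{4p(2p+1)^2} = O(p^{-2})$, a convergent correction contributing the roughly $10^{-3}$ of slack needed to drop below $0.170121$.

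I therefore expect the main obstacle to be entirely computational, namely one of rigorous numerical precision: the explicit enumeration of all Sophie Germain primes below $10^{10}$ and the careful (round-up) summation of $g(p)$ over them. Since the crude substitution already overshoots and the final inequality holds only by a margin of a few parts in $10^4$, the head evaluation must be carried out with certified accuracy, while the tail can be dispatched by directly reusing the bound from the preceding proposition.
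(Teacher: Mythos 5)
Your proposal matches the paper's proof essentially exactly: the paper also splits at $10^{10}$, evaluates the head sum $\sum_{31 < p < 10^{10}} \frac{p-1}{(2p+1)^2} < 0.067897$ explicitly, and bounds the tail by $\frac{1}{4}\sum_{p > 10^{10}} \frac{1}{p} < \frac{1}{4}(0.408894)$ using the preceding proposition, giving $0.067897 + 0.102224 = 0.170121$. Your additional observation that the crude bound $\frac{p-1}{(2p+1)^2} < \frac{1}{4p}$ cannot be applied to the head without overshooting the target is correct and explains why the paper computes the head with the exact summand.
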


\begin{proof}
We break the sum into two parts and evaluate the integral numerically to get
$$ \sum_{\substack{31 < p < 10^{10}\\ 2p+1 \text{ prime}}} \frac{p-1}{(2p+1)^{2}} + \sum_{\substack{p > 10^{10} \\ 2p+1 \text{ prime}}} \frac{p-1}{(2p+1)^{2}} < 0.067897 + \frac{1}{4} \sum_{\substack{p > 10^{10} \text{ prime} \\ 2p+1 \text{ prime}}} \frac{1}{p} < 0.170121.$$
\end{proof}

Now we calculate an upper bound for a double sum over primes $p > 31$ and primes $q$ congruent to 1 modulo $p$.

\begin{proposition}\label{DoubleSumSG}
The following double sum has an upper bound
$$ \sum_{\substack{p \,\mathrm{prime} \\ p > 31}}  \sum_{\substack{q \,\mathrm{prime} \\ q \equiv 1(\bmod p)}} \frac{p-1}{q^2} = \sum_{a \geq 2} \sum_{\substack{p > 31 \,\mathrm{ prime} \\ ap+1 \,\mathrm{ prime}}} \frac{p-1}{(ap+1)^{2}} < 0.319878.$$
\end{proposition}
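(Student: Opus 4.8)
The plan is to reindex the double sum by the ratio $a=(q-1)/p$ and to treat the resulting one-parameter family of sums $T_a:=\sum_{p>31,\ ap+1\text{ prime}}\frac{p-1}{(ap+1)^2}$ one value of $a$ at a time. First observe that the equality asserted in the statement is just this reindexing: a prime $q\equiv 1\pmod p$ is of the form $q=ap+1$ with $a=(q-1)/p\ge 1$, and since both $p>31$ and $q$ are odd the integer $a$ must be even, so effectively $a\ge 2$ is even (odd $a$ make $ap+1$ even and contribute nothing). The term $a=2$ is the sum over Sophie Germain primes already bounded in the preceding proposition by $T_2<0.170121$, and since this is the dominant term the remaining task is to show $\sum_{a\ge 4,\ a\text{ even}}T_a<0.319878-0.170121$.

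For the tail of each $T_a$ I would split at $p=10^{10}$ and use $\frac{p-1}{(ap+1)^2}<\frac{1}{a^2p}$, so that $\sum_{p>10^{10},\ ap+1\text{ prime}}\frac{p-1}{(ap+1)^2}<\frac1{a^2}\sum_{p>10^{10},\ ap+1\text{ prime}}\frac1p$. Applying the Siebert--Riesel--Vaughan density estimate to $\pi_{ap+1}$ via the same partial summation used in the Sophie Germain computation, the only $a$-dependence is the prefactor $C(a):=\prod_{\ell\mid a,\ \ell\neq 2}\frac{\ell-1}{\ell-2}$, because the $x$-dependent part of the density bound is identical for every even $a$. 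Hence the tail of $T_a$ is at most $0.408894\,C(a)/a^2$, the constant $0.408894$ being exactly the integral already evaluated in the $a=2$ case. Summing these tails over all even $a$ is then clean, since
\[\sum_{\substack{a\ge 2\\ a\text{ even}}}\frac{C(a)}{a^2}=\frac13\sum_{u\text{ odd}}\frac{C(u)}{u^2}=\frac13\prod_{\ell\ge 3}\left(1+\frac{1}{(\ell-2)(\ell+1)}\right),\]
an explicitly convergent Euler product; multiplying by $0.408894$ gives a completely explicit bound for the total $p>10^{10}$ contribution.

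It remains to bound the initial parts $T_a^{\mathrm{init}}=\sum_{31<p<10^{10},\ ap+1\text{ prime}}\frac{p-1}{(ap+1)^2}$. For each small even $a$ up to a cutoff $A_0$ I would compute $T_a^{\mathrm{init}}$ directly by enumerating the primes $p<10^{10}$ for which $ap+1$ is prime; for $a>A_0$ I would instead bound $T_a^{\mathrm{init}}<\frac1{a^2}\sum_{31<p,\ ap+1\text{ prime}}\frac1p$ using the density estimate from $31$, which again produces a bound of the shape $O(C(a)/a^2)$ whose sum over $a>A_0$ is forced to be small by convergence of the same Euler product. Adding the explicit small-$a$ contributions, the crude large-$a$ tail, the summed $p>10^{10}$ tails, and $T_2$, the grand total comes out below $0.319878$.

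The main obstacle is the explicit computation for small $a$: because Siebert's bound overestimates the true density by a factor of $8$, the density estimate is far too lossy for the initial (bulk) part of the sum, where most of the mass lives, so those terms genuinely have to be obtained from a large prime search rather than bounded analytically. The two supporting technical points are verifying that Siebert's inequality may legitimately be applied down to the moderate heights entering the large-$a$ estimates, and choosing the cutoff $A_0$ large enough that the combined large-$a$ tail — controlled only through $\sum_a C(a)/a^2$ — stays within the narrow margin left after the Sophie Germain term $T_2$ is subtracted.
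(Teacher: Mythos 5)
Your strategy is the paper's strategy: reindex by $a=(q-1)/p$ (noting $a$ must be even), peel off the Sophie Germain term $T_2<0.170121$ from the preceding proposition, compute the small-$a$/small-$p$ mass explicitly, and control all remaining ranges by partial summation against the Siebert--Riesel--Vaughan bound, with the $a$-dependence entering only through $C(a)/a^2$. The one genuinely new ingredient is your Euler product identity
$$\sum_{\substack{a\ge 2\\ a\text{ even}}}\frac{C(a)}{a^2}=\frac13\prod_{\ell\ge 3}\Bigl(1+\tfrac{1}{(\ell-2)(\ell+1)}\Bigr),$$
which is correct (write $a=2^ku$ with $u$ odd, use multiplicativity of $C$, and simplify $\frac{\ell-1}{\ell-2}\cdot\frac{1}{\ell^2-1}=\frac{1}{(\ell-2)(\ell+1)}$); it is sharper than the paper's device for large $a$, which bounds $\prod_{\ell\mid a,\,\ell\ne2}\frac{\ell-1}{\ell-2}$ by $1+\log a$.

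The genuine gap is that you never produce the numbers, and for this proposition the numbers \emph{are} the proof. The constant $0.319878$ carries no built-in slack: it is exactly the sum of the four quantities the paper computes, $0.170121+0.047986+0.095835+0.005936$, for the paper's particular decomposition ($4\le a\le100$ split at $p=10^9$, plus $a>100$ over all $p>31$). Your decomposition is different (uniform tails from $10^{10}$, initial segments to $10^{10}$, an unspecified cutoff $A_0$), so its total is a different number, and nothing in your write-up shows it lands under that exact target. A back-of-the-envelope check suggests it may not, at least for $A_0=100$: your summed tail piece alone is about $0.408894\times\bigl(\tfrac13\prod(\cdots)-\tfrac14\bigr)\approx 0.409\times 0.216\approx 0.088$, leaving only about $0.062$ of the available margin $0.319878-0.170121\approx 0.150$; the explicitly computable mass for $4\le a\le 100$, $31<p<10^{10}$ is already about $0.049$ (the paper gets $0.047986$ just up to $10^9$), and the density bound on the $a>A_0$, $p<10^{10}$ piece, which pays Siebert's factor-of-eight loss, costs roughly $16C_2\cdot\bigl(\tfrac{1}{\log 37}-\tfrac{1}{\log 10^{10}}\bigr)\cdot\sum_{a>100,\,a\text{ even}}C(a)/a^2\approx 0.02$, pushing the total slightly above $0.319878$. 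Raising $A_0$ to roughly $200$ appears to squeeze it back under, but that is exactly the delicate tuning your plan defers with ``the grand total comes out below $0.319878$.'' As written, the proposal is a sound plan that mirrors the paper plus one nice refinement, but the decisive inequality is asserted rather than established.
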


\begin{proof}
We break the sum into several parts.  In the previous proposition, we calculated an estimate for the $a=2$ terms.  Next we calculate the finite sum
$$ \sum_{a = 4}^{100} \sum_{\substack{31 < p < 10^9 \\ ap+1 \text{ prime}}} \frac{p-1}{(ap+1)^{2}} < 0.047986.$$
Now let
$$G(y) = \min\left( \frac{1}{y^2},   \frac{1}{y^2 + F(e^y)} + 2e^{-y/2} \right).$$
Using the density bound for $\pi_{ap+1}$, we get an upper bound for the sum
$$ \sum_{a = 4}^{100} \sum_{\substack{p > 10^9 \\ ap+1 \text{ prime}}} \frac{p-1}{(ap+1)^{2}} < \sum_{\substack{a = 4 \\ a \text{ even}}}^{100} \left( \frac{16C_2}{a^2}\int_{\log 10^9}^\infty G(y)\,dy \prod_{p|a,\; p \neq 2} \frac{p-1}{p-2} \right) < 0.095835.$$
Finally, we again use the density bound to get an upper bound for the sum
$$ \sum_{a > 100} \sum_{\substack{p > 31 \\ ap+1 \text{ prime}}} \frac{p-1}{(ap+1)^{2}} < \sum_{\substack{a > 100 \\ a \text{ even}}} \left( \frac{16C_2}{a^2}\int_{\log 37}^\infty G(y)\,dy \prod_{p|a,\; p \neq 2} \frac{p-1}{p-2} \right).$$
To get an upper bound for the product $\prod (p-1)/(p-2)$, we observe that if an even integer $a$ has $N$ distinct odd prime factors, we have
$$\prod_{p|a,\; p \neq 2} \frac{p-1}{p-2} = \frac{p_1-1}{p_1-2}\cdot\frac{p_2-1}{p_2-2}\cdots\frac{p_N-1}{p_N-2} \leq \frac{2}{1}\cdot\frac{3}{2}\cdots\frac{N+1}{N} = N+1.$$
Since the number $N$ of odd prime divisors of $a$ is less than $\log a$, we have
$$ \sum_{a > 100} \sum_{\substack{p > 31 \\ ap+1 \text{ prime}}} \frac{p-1}{(ap+1)^{2}} < 16C_2\int_{\log 37}^\infty G(y)\,dy \sum_{\substack{a > 100 \\ a \text{ even}}}  \frac{1+\log a}{a^2} < 0.005936.$$
This gives us an explicit upper bound for the double sum
$$ \sum_{a \geq 2} \sum_{\substack{p > 31 \text{ prime} \\ ap+1 \text{ prime}}} \frac{p-1}{(ap+1)^{2}} < 0.170121 + 0.047986 + 0.095835 + 0.005936 = 0.319878.$$
\end{proof}

\section{Revisiting the first layers $\mathbb{B}_{p,1}$}
We return to the conjecture that all first layers $\mathbb{B}_{p,1}$ have class number 1.  To estimate the probability that this conjecture is true, we calculate the following upper bound.

\begin{proposition}
There is an upper bound for the double sum
$$\sum_{p > 31} \sum_{\substack{q \,\mathrm{prime} \\ q \neq p}} \frac{p-1}{fq^{2f}} < 0.321365$$
and an upper bound for the triple sum,
$$- \sum_{p > 31} \sum_{\substack{q \,\mathrm{prime} \\ q \neq p}} \sum_{k \geq 2} \frac{p-1}{f} \log(1 - q^{-fk}) < 0.327793,$$
where $f$ is the order of $q$ modulo $p$.
\end{proposition}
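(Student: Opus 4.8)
The plan is to reduce both bounds to work already in hand, splitting the double sum according to the value of $f=\operatorname{ord}_p(q)$. Write the double sum as $D=D_1+D_{\geq 2}$, where $D_1$ collects the terms with $f=1$ and $D_{\geq 2}$ those with $f\geq 2$. A prime $q$ has $f=1$ precisely when $q\equiv 1\pmod p$, and in that case $fq^{2f}=q^2$, so
\[
D_1 = \sum_{p>31}\ \sum_{\substack{q\text{ prime}\\ q\equiv 1\,(\bmod p)}}\frac{p-1}{q^2},
\]
which is exactly the double sum estimated in Proposition \ref{DoubleSumSG}. Thus $D_1<0.319878$, and the entire sieve-theoretic input (Siebert together with Riesel and Vaughan) is absorbed into this single estimate. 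It then remains only to show that the higher-order part satisfies $D_{\geq 2}<0.001487$, after which $D<0.319878+0.001487=0.321365$.

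To bound $D_{\geq 2}$ I would reorganize the sum by the prime power $m=q^f$. For fixed $q$ and $f\geq 2$, every prime $p$ with $\operatorname{ord}_p(q)=f$ divides $q^f-1=m-1$; being distinct primes exceeding $31$, their product divides $m-1$, and since the sum of distinct primes $>31$ is dominated by their product, $\sum_{p:\operatorname{ord}_p(q)=f}(p-1)<m-1<m$. Hence each pair $(q,f)$ contributes at most $m^{-2}f^{-1}\cdot m=(fm)^{-1}$, and only those $m=q^f$ for which $m-1$ has a prime factor exceeding $31$ contribute at all. This exhibits $D_{\geq 2}$ as a sum over the sparse set of proper prime powers whose predecessor has a large prime factor. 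I would split at a cutoff $X$ (say $X=10^6$): evaluate the finitely many admissible terms with $m\leq X$ exactly, using the weight $m^{-2}f^{-1}\sum_{p\mid m-1,\,p>31}(p-1)$, and bound the tail $m>X$ by $\sum_{m\text{ proper prime power},\,m>X}(2m)^{-1}$, which is controlled by the squares $q^2>X$ and is below $10^{-4}$.

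With $D<0.321365$ established, the triple sum needs no new idea: it is precisely the quantity $E_1$ analyzed in Section 9, where the fact that $q^f\geq 83$ (the smallest prime power congruent to $1$ modulo any prime $p>31$) gives $-\log(1-q^{-fk})<1.0001\,q^{-fk}$ and $\sum_{k\geq 2}q^{-fk}<\tfrac{83}{82}q^{-2f}$, so that $E_1<1.02\,D$. Consequently the triple sum is at most $1.02\times 0.321365<0.327793$, as claimed.

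The main obstacle is the estimate $D_{\geq 2}<0.001487$. Although $D_{\geq 2}$ is minuscule, the allowance is genuinely narrow: the single term coming from $q=2$, $f=7$, $p=127$ (for which $2^7-1=127$ is prime) already contributes $126/(7\cdot 2^{14})\approx 0.00110$, consuming most of the budget. Each remaining admissible prime power must therefore be accounted for, and the crude per-term bound $(fm)^{-1}$ is too lossy for the smallest powers, so the leading terms have to be computed with the exact weight rather than estimated. Verifying that the finite computation together with the tail stays below $0.001487$ is the only delicate point; everything else is routine bookkeeping.
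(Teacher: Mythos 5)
Your proposal is correct and shares the paper's skeleton: the identical split of the double sum into the $f=1$ part, which is exactly the sum of Proposition \ref{DoubleSumSG} and so is bounded by $0.319878$, and an $f\geq 2$ remainder to be shown $<0.001487$; and the identical passage from the double sum to the triple sum via $-\log(1-q^{-fk})<1.0001\,q^{-fk}$ and $\sum_{k\geq2}q^{-fk}\leq\tfrac{83}{82}q^{-2f}$, so that the triple sum is at most $1.02$ times the double sum. Where you genuinely diverge is the treatment of the $f\geq 2$ terms. The paper bounds the inner sum by counting prime divisors: an integer $m$ has fewer than $\log_{37}m<0.28\log m$ prime divisors exceeding $31$, giving $\sum_{p\mid q^f-1,\,p>31}(p-1)<0.28fq^f\log q$; it then rearranges the whole $f\geq2$ contribution as $0.28\sum_q \frac{\log q}{q(q-1)}$ (evaluated against the tabulated constant $\sum_q(\log q)/q^2\approx 0.4931$, giving $<0.28\times 0.755367$) minus an explicitly computed correction $>0.210016$ over prime powers $q^f<10^6$, landing exactly on $0.001487$. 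You instead use the sharper elementary bound $\sum_{p\mid q^f-1,\,p>31}(p-1)<q^f-1$ (the distinct primes have product dividing $q^f-1$, and their sum is dominated by their product), perform the same finite computation over $q^f\leq 10^6$ with exact weights --- whose dominant term $126/(7\cdot 2^{14})\approx 0.0011$ you correctly identify --- and bound the tail by the density of proper prime powers. Your inner bound is better (linear in $q^f$ versus $0.28\,q^f\log q^f$), so your tail is smaller than the paper's, and your final constant would come in around $0.0013$ rather than $0.001487$; the paper's route, in exchange, needs no prime-counting input at all, since its tail collapses onto a precomputed constant from the literature. One caveat on your only delicate step: the claim that the tail $\sum_{m>10^6}(2m)^{-1}$ over proper prime powers is below $10^{-4}$ is true, but it genuinely requires prime density (an explicit Chebyshev/Rosser--Schoenfeld bound, or pushing the explicit computation of $\sum_{q>1000}1/(2q^2)$ further); if one lazily majorizes the prime squares by squares of all odd integers, the tail inflates to roughly $2.5\times 10^{-4}$, which still keeps the total under $0.001487$ but with essentially no margin. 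With a real prime-counting estimate the tail is about $8\times 10^{-5}$ and your argument closes comfortably.
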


\begin{proof}
We first break the double sum into two pieces:
\begin{align*}
\sum_{p > 31} \sum_{\substack{q \,\mathrm{prime} \\ q \neq p}} \frac{p-1}{fq^{2f}} &= \sum_{p > 31} \sum_{\substack{q \,\mathrm{prime} \\ q \neq p, f = 1}} \frac{p-1}{fq^{2f}} + \sum_{p > 31} \sum_{\substack{q \,\mathrm{prime} \\ q \neq p, f > 1}} \frac{p-1}{fq^{2f}} \\ & \leq  \sum_{\substack{p \,\mathrm{prime} \\ p > 31}}  \sum_{\substack{q \,\mathrm{prime} \\ q \equiv 1(\bmod p)}} \frac{p-1}{q^2} + \sum_{f \geq 2} \sum_{q \text{ prime}} \frac{1}{fq^{2f}}\sum_{\substack{p > 31 \text{ prime} \\ p | q^f-1}} (p-1).
\end{align*}
In Proposition \ref{DoubleSumSG}, we proved that the first term on the right has an upper bound
$$ \sum_{\substack{p \,\mathrm{prime} \\ p > 31}}  \sum_{\substack{q \,\mathrm{prime} \\ q \equiv 1(\bmod p)}} \frac{p-1}{q^2} < 0.319878.$$
To estimate the terms with $f \geq 2$, we observe that the number of prime divisors $p > 31$ of an integer $m$ is less than $\log_{37}m < 0.28 \log m$.  Thus we have
$$\sum_{\substack{p>31 \text{ prime} \\ p | q^f-1}} (p-1) < 0.28(q^f-1)\log(q^f-1)< 0.28fq^f\log{q}.$$
Using this estimate to bound the terms with $q^f > 10^6$,
$$\sum_{f \geq 2} \sum_{q \text{ prime}} \frac{1}{fq^{2f}}\sum_{\substack{p>31 \text{ prime} \\ p | q^f-1}} (p-1) < \sum_{f \geq 2} \sum_{\substack{q \text{ prime} \\ q^f < 10^6}} \frac{1}{fq^{2f}} \sum_{\substack{p | q^f-1 \\ p > 31}}(p-1) + \sum_{f \geq 2} \sum_{\substack{q \text{ prime} \\ q^f > 10^6}} \frac{0.28\log q}{q^f},$$
we can rearrange to get
$$\sum_{f \geq 2} \sum_{q \text{ prime}} \frac{1}{fq^{2f}}\sum_{\substack{p>31 \text{ prime} \\ p | q^f-1}} (p-1) < \sum_{f \geq 2} \sum_{q \text{ prime}} \frac{0.28\log q}{q^f} \; -\sum_{\substack{f \geq 2 \\ q^f < 10^6}}\frac{1}{fq^{2f}}\left(0.28fq^f\log q - \sum_{\mathclap{\substack{p | q^f-1 \\ p > 31}}}(p-1) \right).$$
We then explicitly calculate the terms with $q^f < 10^6$
$$\sum_{\substack{f \geq 2 \\ q^f < 10^6}}\frac{1}{fq^{2f}}\left(0.28fq^f\log q - \sum_{\substack{p | q^f-1 \\ p > 31}}(p-1) \right) > 0.210016.$$
Thus, we have
$$\sum_{f \geq 2} \sum_{q \text{ prime}} \frac{1}{fq^{2f}}\sum_{\substack{p>23 \text{ prime} \\ p | q^f-1}} (p-1) < -0.210016 + 0.28 \sum_{q \text{ prime}} \frac{\log q}{q(q-1)}.$$
Since $\sum_{q \text{ prime}} (\log q) / q^2 = 0.4930911093\dots$ (see \cite{Cohen}), it is a straightforward calculation to show that
$$\sum_{q \text{ prime}} \frac{\log q}{q(q-1)} < \sum_{q < 10^6} \frac{\log q}{q(q-1)} + \frac{10^6}{10^6-1}\sum_{q > 10^6} \frac{\log q}{q^2} < 0.755367.$$
Therefore, as expected, we find that the contribution from the $f \geq 2$ terms is relatively small
$$\sum_{f \geq 2} \sum_{q \text{ prime}} \frac{1}{fq^{2f}}\sum_{\substack{p>31 \text{ prime} \\ p | q^f-1}} (p-1) < -0.210016 + 0.28 \times 0.755367 < 0.001487.$$
We conclude that our double sum has an upper bound
$$\sum_{p > 31} \sum_{\substack{q \,\mathrm{prime} \\ q \neq p}} \frac{p-1}{fq^{2f}} < 0.319878 + 0.001487 = 0.321365,$$
and that our triple sum has an upper bound
$$- \sum_{p > 31} \sum_{\substack{q \,\mathrm{prime} \\ q \neq p}} \sum_{k \geq 2} \frac{p-1}{f} \log(1 - q^{-fk}) < 1.02 \sum_{p > 31} \sum_{\substack{q \,\mathrm{prime} \\ q \neq p}} \frac{p-1}{fq^{2f}} < 0.327793.$$
\end{proof}

Let $P_T$ denote the probability (based on the Cohen-Lenstra heuristics) that all the first level fields $\mathbb{B}_{p,1}$ have class number 1.  Then, by the above proposition, we estimate that $-\log{P_T} < 0.327793$.  We can further refine our estimate by considering the results of Hakkarainen \cite{Hakkarainen}, who calculated certain divisors of the class number for abelian number fields of conductors less than 2000.  Since the fields $\mathbb{B}_{p,1}$ have conductor $p^2$, we can apply Hakkarainen's results to the fields $\mathbb{B}_{37,1}$, $\mathbb{B}_{41,1}$ and $\mathbb{B}_{43,1}$.  He showed that the class numbers of those  fields do not have any odd prime divisors less than 10000.  By excluding these terms from our sum, we can reduce our estimate for $-\log{P_T}$ by 0.010431.  Furthermore, by Theorem \ref{ParityClassNumber}, we know the parity of the class number of $\mathbb{B}_{p,1}$ is odd for $p < 500$, so we can further reduce our estimate for $-\log{P_T}$ by 0.001131.  Finally, we can incorporate unpublished results by T. Fukuda that $2p+1$ does not divide the class number of $\mathbb{B}_{p,1}$ for Sophie Germain primes less than 1000.  Fukuda's results reduce our estimate for $-\log{P_T}$ by 0.030415.  This give us an adjusted upper bound for $-\log{P_T}$ of
$$-\log{P_T} < 0.327793 - 0.010431 - 0.001131 - 0.030415 = 0.285816.$$
This gives a lower bound for $P_T$ of
$$P_T  = \exp(-0.285816) > 0.75.$$
Thus, assuming the validity of the Cohen-Lenstra heuristics, the probability that the conjecture that all the first layers $\mathbb{B}_{p,1}$ have class number 1 is at least 75\%.

\section{Higher layers of the $\mathbb{Z}_p$-extensions}
The higher layers of the $\mathbb{Z}_p$-extensions have a relatively small contribution to our estimates, and an upper bound is much easier to establish.  Let $P_T$ be the conditional probability that all the $\mathbb{B}_{p,n}$, for primes $p>31$ and layers $n \geq 2$, have class number 1, conditional upon all of the first layers $\mathbb{B}_{p,1}$ having class number 1.  We estimate $P_T$ by applying the Cohen-Lenstra heuristics
$$P_T = \prod_{\substack{p \text{ prime} \\ p > 31}} \prod_{\substack{q \text{ prime} \\ q \neq p}} \prod_{j \geq 2} \prod_{k \geq 2} (1 - q^{-fk})^{\phi(p^j)/f}.$$
We use the following proposition to estimate $-\log{P_T}$.

\begin{proposition}
The following quadruple sum has an upper bound
$$-\sum_{\substack{p \,\mathrm{ prime} \\ p > 31}} \sum_{\substack{q \,\mathrm{ prime} \\ q \neq p}} \sum_{j \geq 2} \sum_{k \geq 2} \frac{\phi(p^j)}{f} \log(1 - q^{-fk}) < 0.010398,$$
where $\phi$ is the Euler totient function and $f$ is the order of $q$ modulo $p^j$.
\end{proposition}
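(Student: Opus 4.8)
The plan is to mirror the reductions already used for the $\mathbb{Z}_5$- and $7 \le p \le 31$-extensions, though the situation here is in fact cleaner because there is no finite initial segment that must be subtracted off. First I would dispatch the innermost sum over $k$. Since $p > 31$ and $j \ge 2$, every contributing prime power satisfies $q^f \equiv 1 \pmod{p^j}$ with $q^f \ne 1$, hence $q^f \ge 1 + p^j \ge 1 + 37^2 = 1370$. Consequently $q^{-fk} \le q^{-2f} \le 1370^{-2}$, and the familiar estimates
$$-\log(1 - q^{-fk}) < \frac{-\log(1 - 1370^{-2})}{1370^{-2}}\, q^{-fk}, \qquad \sum_{k \ge 2} q^{-fk} = \frac{q^{-2f}}{1 - q^{-f}} \le \frac{q^{-2f}}{1 - 1370^{-1}}$$
reduce the quadruple sum to a constant $c$, with $c$ barely exceeding $1$, times the triple sum $\sum_{p>31}\sum_{q \ne p}\sum_{j \ge 2} \phi(p^j)/(f q^{2f})$.

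Next I would collapse the sum over $q$ by the observation, already exploited in the earlier propositions, that $q^f \equiv 1 \pmod{p^j}$ forces $q^f = 1 + p^j t$ for some positive integer $t$. Dropping the harmless factor $1/f \le 1$ and over-counting by letting $t$ range over \emph{all} positive integers gives
$$\sum_{\substack{q \text{ prime} \\ q \ne p}} \frac{\phi(p^j)}{f q^{2f}} \le \sum_{t \ge 1} \frac{p^{j-1}(p-1)}{(1 + p^j t)^2} < \frac{p-1}{p^{j+1}} \sum_{t \ge 1} \frac{1}{t^2} = \frac{p-1}{p^{j+1}} \cdot \frac{\pi^2}{6}.$$
Unlike the $\mathbb{Z}_2$- and $\mathbb{Z}_3$-cases, there is no lower cutoff on $q^f$ beyond $q^f \ge 1370$, so I may invoke the full value $\sum_{t \ge 1} t^{-2} = \pi^2/6$ without needing to subtract off any finite tail.

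It then remains to sum over $j$ and $p$. Summing over $j \ge 2$ gives the telescoping-geometric value $\sum_{j \ge 2} (p-1)/p^{j+1} = 1/p^2$, so the triple sum is bounded by $\tfrac{\pi^2}{6}\sum_{p > 31} p^{-2}$. I would evaluate $\sum_{p > 31} p^{-2} = P(2) - \sum_{p \le 31} p^{-2}$, where $P(2) = \sum_{p} p^{-2} \approx 0.452247$ is the prime zeta value at $2$; subtracting the explicit contribution of the primes $2, 3, \dots, 31$ leaves a tail on the order of $6.2 \times 10^{-3}$. Multiplying by $\pi^2/6$ and by the correction constant $c$ from the first step yields a value comfortably below the claimed $0.010398$.

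The argument is conceptually routine: each step reuses a device already validated earlier in the paper, and no analytic difficulty arises. The only point that warrants genuine care is the numerical bookkeeping of the prime zeta tail $\sum_{p > 31} p^{-2}$, since the final bound is only about $0.0102$ and thus the estimates of $P(2)$ and of the finite sum $\sum_{p \le 31} p^{-2}$ must be carried to enough decimal places that accumulated rounding does not push the total above $0.010398$.
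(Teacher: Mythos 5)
Your proposal is correct and follows essentially the same route as the paper: reduce the $k$-sum to a constant barely above $1$ times $\sum \phi(p^j)/(fq^{2f})$, replace the $q$-sum by a sum over $t$ via $q^f = 1 + p^j t$, collapse the $j$-sum to $1/p^2$, and finish with the prime zeta tail $\sum_{p>31} p^{-2} \approx 0.006197$. Your constant from the $k$-sum (using $q^f \geq 1370$) is in fact slightly sharper than the paper's factor of $1.02$, so the numerics close comfortably.
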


\begin{proof}
Let $S$ denote the quadruple sum.  Similarly to our earlier arguments, the sum $S$ has an upper bound
$$S < 1.02 \sum_{p > 31} \sum_{q \neq p} \sum_{j \geq 2} \frac{\phi(p^j)}{fq^{2f}}.$$
Since $q^f = 1 + p^j t$ for some integer $t$, we have
$$S < 1.02  \sum_{p > 31} \sum_{j \geq 2} \sum_{t \geq 1} \frac{(p-1)p^{j-1}}{(1+p^jt)^2} < 1.02  \sum_{p > 31} \sum_{j \geq 2} \sum_{t \geq 1} \frac{p-1}{p^{j+1}t^2} = 1.02\frac{\pi^2}{6} \sum_{p > 31} \frac{1}{p^2}.$$
Since $\sum_p 1/p^2 = 0.452247420041\dots$, it is straightforward to calculate that
$$S < 1.02 \frac{\pi^2}{6} \times \sum_{p>31} 1/p^2 <1.02 \frac{\pi^2}{6} \times 0.006197 < 0.010398.$$
\end{proof}

\section{Summary of results and concluding remarks}
We summarize all the preceding results in Table \ref{TableUpperBounds}, which gives upper bounds for $-\log{P_T}$, where $P_T$ is the expected probability that the associated conjecture is true, based on the Cohen-Lenstra heuristics.  In particular, for our Main Conjecture, we have that
$$-\log{P_T} = - \sum_{p \,\mathrm{prime}} \sum_{q \,\mathrm{prime}} \sum_{\mathclap{\substack{j=1 \\ \text{excluding terms where the} \\ q\text{-part of the class group} \\ \text{is known to be trivial}}}}^\infty \sum_{k \geq 2} \frac{\phi(p^j)}{f}\log(1 - q^{-fk}) < 0.301215.$$
Thus, assuming the validity of the Cohen-Lenstra heuristics, the probability that the Main Conjecture is true is at least 74\%.
$$P_T > e^{-0.301215} \approx 74\%.$$

\begin{center}
\begin{table}
\label{TableUpperBounds}
\caption{Upper bounds for $-\log{P_T}$}
\begin{tabular}{@{} l  l  l  l @{} }
\hline
  \bf{Prime} $p$ & \bf{1st level} &  \bf{Level} $> 1$  & \bf{Total}\\
\hline
   $2$     	& 0      			& $1.3 \times 10^{-8}$	& $1.3 \times 10^{-8}$ \\
   $3$     	& 0				& $1.1 \times 10^{-8}$	& $1.1 \times 10^{-8}$ \\
   $5$     	& 0             		& 0.001868			& 0.001868\\
   $7$     	& 0             		& 0.001626 			& 0.001626\\
  $11$    	& 0             		& 0.000901 			& 0.000901\\
  $13$    	& 0             		& 0.000449 			& 0.000449\\
  $17$    	& 0            			& 0.000052 			& 0.000052\\
  $19$    	& 0             		& 0.000027 			& 0.000027\\
  $23$    	& 0             		& 0.000018 			& 0.000018\\
  $29$    	& 0             		& 0.000041 			& 0.000041\\
  $31$    	& 0             		& 0.000019 			& 0.000019\\
  $p > 31$	& 0.285816		& 0.010398 			& 0.296214\\
\hline
  \bf{Total}        & 0.285816		& 0.015399			& \textbf{0.301215}\\
\hline
\end{tabular}
\end{table}
\end{center}

This is perhaps unduly pessimistic.  If we were to replace the rigorous density bounds of Siebert with the conjectural asymptotics of Hardy and Littlewood, then our estimate for $-\log{P_T}$ would be approximately 0.126, giving a probability estimate of 88\%.  This gives strong heuristic support for our conjecture that all fields in the cyclotomic $\mathbb{Z}_p$-extensions over $\mathbb{Q}$ have trivial class group.

For the more skeptically-minded, our analysis indicates the most likely places to look for possible counterexamples.  The largest contributions to our estimates arise from the 1st level $\mathbb{B}_{p,1}$ of the $\mathbb{Z}_p$-extension at the smaller Sophie Germain primes $p$; we then consider the $(2p+1)$-part of their class number.  The smallest such prime $p$ for which we do not know the $(2p+1)$-part of the class number is $p = 1013$.  Nevertheless, such primes between 1000 and 10,000 still only contribute about 0.01 to our sum, so we would expect to only have about a 1\% chance of producing a counterexample among those primes.  Similarly, for primes $p$ such that $53 \leq p \leq 10000$ and $4p+1$ is prime, we expect to have only an approximately 1\% chance of producing a counterexample.

\section{Acknowledgments}
I would like to thank my advisor, Henryk Iwaniec, for introducing me to Weber's class number problem, and his steadfast encouragement in these further explorations of the class numbers of cyclotomic fields and $\mathbb{Z}_p$-extensions.  I thank Keith Conrad, Matthew de Courcy-Ireland, Takashi Fukuda, Carl Pomerance, Jerrold Tunnell, Lawrence Washington and the anonymous referee for their careful reading and suggestions.  I would also like to gratefully acknowledge the use of SageMathCloud\textsuperscript{TM}, which is supported by the National Science Foundation under Grant No. DUE-1022574 and by Google via its Google Research Awards program.

\medskip

\end{document}